\newtheorem{corollary}{Corollary}[section]
\newtheorem{lemma}[corollary]{Lemma}
\newtheorem{proposition}[corollary]{Proposition}
\newtheorem{theorem}[corollary]{Theorem}
\theoremstyle{definition}
\newtheorem{definition}[corollary]{Definition}
\newtheorem{remark}[corollary]{Remark}
\newtheorem*{acknowledgements}{\sc Acknowledgements}
\numberwithin{equation}{section}
\newcommand{\xstararrow}[2]{\xrightharpoonup[#1]{#2\ \hspace{-0.02cm}_{*}\hspace{-0.2cm}}}
\def\Xint#1{\mathchoice
{\XXint\displaystyle\textstyle{#1}}%
{\XXint\textstyle\scriptstyle{#1}}%
{\XXint\scriptstyle\scriptscriptstyle{#1}}%
{\XXint\scriptscriptstyle\scriptscriptstyle{#1}}%
\!\int}
\def\XXint#1#2#3{{\setbox0=\hbox{$#1{#2#3}{\int}$ }
\vcenter{\hbox{$#2#3$ }}\kern-.6\wd0}}
\def\dashint{\Xint-}
\def\mint{\Xint{\rotatebox[origin][30]{$-$}}}
\def \div {\mathop {\rm div}\nolimits}
\def \Tr {\mathop {\rm Tr}\nolimits}
\def \de {\mathrm d}
\def \e {\mathrm e}
\def \R {\mathbb R}
\def \N {\mathbb N}
\def \C {\mathbb C}
\def \B {\mathbb B}
\def \V {\mathcal V}
\def \D {\mathcal D}
\begin{document}


\title[The viscoelastic paradox in a nonlinear Kelvin-Voigt type model of dynamic fracture]{The viscoelastic paradox in a nonlinear Kelvin-Voigt type model of dynamic fracture}

\author[M. Caponi]{Maicol Caponi}
\address[Maicol Caponi]{Dipartimento di Matematica e Applicazioni “R. Caccioppoli”, Università degli Studi di Napoli ``Federico II'', Via Cintia, Monte S. Angelo, 80126 Naples, Italy}
\email{maicol.caponi@unina.it}
\author[A. Carbotti]{Alessandro Carbotti}
\address[Alessandro Carbotti]{Dipartimento di Matematica e Fisica “E. De Giorgi”, Università del Salento, Via Per
Arnesano, 73100 Lecce, Italy}
\email{alessandro.carbotti@unisalento.it}
\author[F. Sapio]{Francesco Sapio}
\address[Francesco Sapio]{Wolfgang Pauli Institute, Oskar-Morgenstern-Platz 1, 1090 Vienna, Austria}
\email{sapiof23@univie.ac.at}

\begin{abstract}
In this paper we consider a dynamic model of fracture for viscoelastic materials, in which the constitutive relation, involving the Cauchy stress and the strain tensors, is given in an implicit nonlinear form. We prove the existence of a solution to the associated viscoelastic dynamic system on a prescribed time-dependent cracked domain via a discretisation-in-time argument. Moreover, we show that such a solution satisfies an energy-dissipation balance in which the energy used to increase the crack does not appear. As a consequence, in analogy to the linear case this nonlinear model exhibits the so-called \textit{viscoelastic paradox}.
\end{abstract}

\maketitle

\noindent
{\bf Keywords}: 
dynamic fracture, cracking domains, elastodynamics, nonlinear viscoelasticity, 
monotone operators, energy-dissipation balance, viscoelastic paradox

\medskip

\noindent
{\bf MSC 2020}: 35L53, 35A01, 35Q74, 47H05, 74D10, 
74R10.

	
\section{Introduction}

In the derivation of a mathematical model for dynamic crack propagation, the two fundamental facts that must be taken into account are the laws of elastodynamics and the (dynamic) Griffith criterion. The first one states that the displacement of the deformation must solve the elastodynamics system away from the crack, while the second one dictates how the crack grows in time. More precisely, the Griffith criterion (see~\cite{Grif,Mott}), originally formulated in the quasi-static setting, explains that there is a balance between the mechanical energy dissipated during the evolution and the energy used to increase the crack, which is supposed to be proportional to the area increment of the crack itself. 

The first step to address the study of a model of dynamic fracture is to find the solution to the elastodynamics system when the evolution of the crack is prescribed. From the mathematical point of view, this leads to the study of the following system in a time-dependent domain:
\begin{equation}\label{eq:elastodynamics}
\ddot u(t)-\div (\sigma(t))=f(t)\quad\text{in $\Omega\setminus\Gamma_t$, $t\in[0,T]$},
\end{equation}
with some prescribed boundary and initial conditions. In the above formulation $\Omega\subset\R^d$ is an open bounded set with Lipschitz boundary which represents the reference configuration of the material. The $(d-1)$-dimensional closed set $\Gamma_t\subset\overline\Omega$ models the crack at time $t$, $u(t)\colon\Omega\setminus\Gamma_t\to\R^d$ is the displacement of the deformation, $\sigma(t)$ is the Cauchy stress tensor, and $f(t)$ is a forcing term. Once we found the displacement $u$ which solves~\eqref{eq:elastodynamics} with a prescribed crack evolution $t\mapsto\Gamma_t$, we determine the pairs displacement-crack which satisfy the Griffith energy-dissipation balance. Finally, we select the “right" crack evolution according to some maximal dissipation principle. 

In the easiest case of a pure elastic material, the system 
~\eqref{eq:elastodynamics} is coupled with the following constitutive law involving the Cauchy stress and the strain tensors:
\begin{equation}\label{eq:el_conlaw}
\sigma(t)=\C eu(t)\quad\text{in $\Omega\setminus\Gamma_t$, $t\in[0,T]$},
\end{equation}
where $\C$ is the elasticity tensor, which is fourth order positive definite on the space of symmetric matrices $\R^{d\times d}_{\rm sym}$, and $eu=\frac{1}{2}(\nabla u+\nabla u^T)$ is the strain tensor. In this setting the Griffith criterion reads as
\begin{equation}\label{eq:el_enbal}
\frac{1}{2}\|\dot u(t)\|_2^2+\frac{1}{2}\|eu(t)\|_2^2+\mathcal H^{d-1}(\Gamma_t\setminus\Gamma_0)=\frac{1}{2}\|\dot u(0)\|_2^2+\frac{1}{2}\|eu(0)\|_2^2+\text{work of external forces}
\end{equation}
for all $t\in[0,T]$. We point out that the first two terms in the left-hand side of the above identity correspond to the mechanical energy (the sum of kinetic and elastic energy), while the term $\mathcal H^{d-1}(\Gamma_t\setminus\Gamma_0)$ models the energy used to increase the crack from $\Gamma_0$ to $\Gamma_t$. 

In the literature, we can find several mathematical results for the model associated to~\eqref{eq:elastodynamics} and~\eqref{eq:el_conlaw}. As for the existence of a solution when the evolution $t\mapsto\Gamma_t$ is prescribed, we refer to~\cite{DM-Lar,DM-Luc} for the antiplane case, that is when $u(t)\colon\Omega\setminus\Gamma_t\to\R$ is a scalar function and $eu$ is replaced by $\nabla u$, and~\cite{Caponi,Tasso} for the general case. Regarding the determination of the crack evolution $t\mapsto\Gamma_t$, we have only partial results. For example, we cite~\cite{CLT}, where the authors characterize in the antiplane case and for $d=2$ the pairs displacement-crack which satisfy the energy-dissipation balance, and~\cite{DMLT,DMLT2} in which for $d=2$ the authors studied the coupled problem under a suitable notion of maximal dissipation.

Viscoelastic materials, which exhibit both viscous and elastic behaviours when undergoing deformations, are another class widely studied in the literature. One of the simplest mathematical model is the Kelvin-Voigt one, where the constitutive law between the Cauchy stress and the strain tensors reads as
\begin{equation}\label{eq:KV_conlaw}
\sigma(t)=\C eu(t)+\mathbb B e\dot u(t)\quad\text{in $\Omega\setminus\Gamma_t$, $t\in[0,T]$},
\end{equation}
where $\C$ and $\B$ are the elasticity and the viscosity tensors, respectively. For the Kelvin-Voigt model, the Griffith criterion leads to the following energy-dissipation balance
\begin{equation}\label{eq:KV_enbal}
\begin{aligned}
&\frac{1}{2}\|\dot u(t)\|_2^2+\frac{1}{2}\|eu(t)\|_2^2+\mathcal H^{d-1}(\Gamma_t\setminus\Gamma_0)+\int_0^t\|e\dot u(s)\|_2^2\,\de s\\
&=\frac{1}{2}\|\dot u(0)\|_2^2+\frac{1}{2}\|eu(0)\|_2^2+\text{work of external forces}
\end{aligned}
\end{equation}
for all $t\in[0,T]$. Notice that, with respect to formula~\eqref{eq:el_enbal}, in~\eqref{eq:KV_enbal} we need to take into account also the energy dissipated by the viscous term, which is given by $\int_0^t\|e\dot u(s)\|_2^2\,\de s$. 

In~\cite{DM-Lar,Tasso} we can found existing results for the linear viscoelatic problem~\eqref{eq:elastodynamics} and~\eqref{eq:KV_conlaw}, when the evolution of the crack is prescribed. Unfortunately, in those papers, it is also shown that the Griffith energy-dissipation balance~\eqref{eq:KV_enbal} holds without the term $\mathcal H^{d-1}(\Gamma_t\setminus\Gamma_0)$. As a consequence, there is no pair displacement-crack which satisfies~\eqref{eq:KV_enbal}, unless the crack does not grow in time, i.e., $\Gamma_t=\Gamma_0$ for all $t\in[0,T]$. This phenomenon, which says that in the linear Kelvin-Voigt model the crack can not propagate, is well-known in mechanics as the {\it viscoelastic paradox}, see for instance~\cite[Chapter 7]{Slepyan}. We point out that, if the viscosity tensor $\B$ is allowed to degenerate in a neighborhood of the moving crack, the viscoelastic paradox does not occur, as shown in~\cite{CS}. For other versions of linear constitutive laws in the framework of viscoelatic materials, we refer for example to~\cite{CS2,Cianci,CDM,Sapio}.

More recently, viscoelastic materials in which the constitutive relation is nonlinear and given in an implicit form have been also considered. For example, in~\cite{BuPaSuSe}, the authors studied the following elastodynamic system in a domain without cracks: 
\begin{equation}\label{eq:elastodynamic2}
\ddot u(t)-\div (\sigma(t))=f(t)\quad\text{in $\Omega$, $t\in[0,T]$},
\end{equation}
with the implicit constitutive law
\begin{equation}
\label{eq:nonlinKV_conlaw2}G(\sigma(t))=eu(t)+e\dot u(t)\quad\text{in $\Omega$, $t\in[0,T]$},
\end{equation}
where $G\colon\R^{d\times d}_{sym}\to \R^{d\times d}_{sym}$ is a nonlinear monotone operator which satisfies suitable $p$-growth assumptions. In particular, the prototypical models studied are
\begin{equation}\label{eq:Gmodel}
G_1(\xi):=|\xi|^{p-2}\xi\quad\text{for $p>1$},\qquad G_2(\xi)=\frac{\xi}{(1+|\xi|^a)^\frac{1}{a}}\quad\text{for $p=1$, with $a>0$}. 
\end{equation}
As explained by Bulíček, Patel, Süli, and Şengül in their paper~\cite{BuPaSuSe} (see also~\cite{Patel2}), linear models may be inaccurate to describe real phenomena, while implicit constitutive theories allow for a more general structure in modelling than explicit ones. Moreover, as shown by Rajagopal in~\cite{Raja}, the nonlinear relationship between the stress and the strain can be obtained after linearising the strain, and so it make sense to consider implicit constitutive relations in the contest of small deformations. Under suitable assumptions on the initial data and on the nonlinear term $G$, Bulíček, Patel, Süli, and Şengül in~\cite{BuPaSuSe} proved existence and uniqueness of solutions to the problem~\eqref{eq:elastodynamic2} and~\eqref{eq:nonlinKV_conlaw2} via the Galerkin approximation.

The aim of our paper is to study the model of viscoelastic materials with implicit constitutive law of~\cite{BuPaSuSe}, in the framework of dynamic crack propagation. More precisely, we consider the elastodynamics system~\eqref{eq:elastodynamics} with the constitutive relation
\begin{equation}\label{eq:nonlinKV_conlaw}
G(\sigma(t))=eu(t)+e\dot u(t)\quad\text{in $\Omega\setminus\Gamma_t$, $t\in[0,T]$},
\end{equation}
where $G\colon\R^{d\times d}_{sym}\to \R^{d\times d}_{sym}$ is a nonlinear monotone operator which satisfies suitable $p$-growth assumptions (more precisely (G1)--(G3) in Section~\ref{sec:not_prob}). Since the linear growth $p=1$ is hard to handle even in the case with no cracks, we restrict ourselves to the range $p\in (1,2^*)$, where $2^*:=\frac{2d}{d-2}$ is the Sobolev critical exponent. The condition $p<2^*$, which also appears in ~\cite{BuPaSuSe}, is needed to ensure that the displacement $u(t)$ is an element of $L^2(\Omega\setminus\Gamma_t;\R^d)$. Indeed, from~\eqref{eq:nonlinKV_conlaw}, we easily deduce that $u(t)$ lives in the Sobolev space $W^{1,p'}(\Omega\setminus\Gamma_t;\R^d)$, being $p'=\frac{p}{p-1}$ the Hölder conjugate exponent of $p$, which is compactly embedded in $L^2(\Omega\setminus\Gamma_t;\R^d)$ whenever $p<2^*$. This simplifies the mathematical formulation of the problem. An interesting question, which is out of the scope of this paper, is whether this condition can be removed.

Our first result is Theorem~\ref{thm:main}, where we prove the existence of a solution to the problem~\eqref{eq:elastodynamics} and~\eqref{eq:nonlinKV_conlaw} when the crack evolution $t\mapsto\Gamma_t$ is prescribed, under suitable conditions on the data and on the nonlinear term $G$. The proof of Theorem~\ref{thm:main} follows the main ideas of~\cite{BuPaSuSe}, adapting them to our setting. First, since the Galerkin approximation does not fit well with the framework of time-dependent domains, we use the discretisation-in-time scheme exploited in~\cite{DM-Lar}. Moreover, since we want to consider nonlinear operators which are not strictly monotone, we regularise $G$ in order to invert the relation~\eqref{eq:nonlinKV_conlaw}. This allows us to write the Cauchy tensor in terms of the displacement and to switch from the formulation~\eqref{eq:elastodynamics} and~\eqref{eq:nonlinKV_conlaw} to a simpler system. More precisely, we fix $n\in\N$ and we search a discrete-in-time approximate solution to~\eqref{eq:elastodynamics} and~\eqref{eq:nonlinKV_conlaw} with $G$ replaced by its regularisation. Then, we perform a discrete energy estimate (see Lemma~\ref{lem:estM}), which allows us to pass to the limit as $n\to\infty$ to obtain a pair $(u,\sigma)$ which solves~\eqref{eq:elastodynamics}. We prove that the displacement $u$ is more regular in time, and by using a standard technique in the monotone operator theory, we show that $(u,\sigma)$ satisfies also the implicit constitutive relation~\eqref{eq:nonlinKV_conlaw}. We conclude this part of the paper with Theorem~\ref{thm:uniq}, where we prove that there is at most one pair $(u,\sigma)$ with the same regularity of the solution of Theorem~\ref{thm:main} and that solves~\eqref{eq:elastodynamics} and~\eqref{eq:nonlinKV_conlaw} for a prescribed crack evolution $t\mapsto\Gamma_t$.

In the second part of the paper, we aim to study the validity of the Griffith energy-dissipation balance for the implicit nonlinear model~\eqref{eq:elastodynamics} and~\eqref{eq:nonlinKV_conlaw}. At first, in Theorem~\ref{thm:enbal} we prove that the mechanical energy of every regular solution to problem~\eqref{eq:elastodynamics} and~\eqref{eq:nonlinKV_conlaw} (in particular, of the one found in Theorem~\ref{thm:main}) satisfies the implicit energy balance~\eqref{eq:enbal}. Then, we consider the strictly monotone operator $G(\xi)=|\xi|^{p-2}\xi$, so that our problem reduces to the nonlinear Kelvin-Voigt system
\begin{equation}\label{eq:elasto_nonlineKV}
\ddot u(t)-\div(|eu(t)+e\dot u(t)|^{p'-2}(eu(t)+e\dot u(t)))=f(t)\quad\text{in $\Omega\setminus\Gamma_t$, $t\in[0,T]$}. 
\end{equation}
 In this setting, the Griffith energy-dissipation balance takes the form
\begin{equation}\label{eq:nonlin_enbal}
\begin{aligned}
&\frac{1}{2}\|\dot u(t)\|_2^2+\frac{1}{p'}\|eu(t)\|_{p'}^{p'}+\mathcal H^{d-1}(\Gamma_t\setminus\Gamma_0)\\
&+\int_0^t\int_\Omega\left(|eu(s,x)+e\dot u(s,x)|^{p'-2}(eu(s,x)+e\dot u(s,x))-|eu(s,x)|^{p'-2}eu(s,x)\right)\cdot e\dot u(s,x)\,\de x\,\de s\\
&=\frac{1}{2}\|\dot u(0)\|_2^2+\frac{1}{p'}\|eu(0)\|_{p'}^{p'}+\text{work of external forces}
\end{aligned}
\end{equation}
for every $t\in[0,T]$. In particular, the energy dissipated by the viscous term is given by $$\int_0^t\int_\Omega\left(|eu(s,x)+e\dot u(s,x)|^{p'-2}(eu(s,x)+e\dot u(s,x))-|eu(s,x)|^{p'-2}eu(s,x)\right)\cdot e\dot u(s,x)\,\de x\,\de s\ge 0,$$
which reduces to the corresponding term in~\eqref{eq:KV_enbal} for $p=2$ (notice that this term is non negative due to the monotonicity of $G^{-1}(\eta):=|\eta|^{p'-2}\eta$). For this particular choice of $G$, in Corollary~\ref{coro:enbal2} we derive that the energy dissipation balance proved in Theorem~\ref{thm:enbal} can be rewritten just in terms of the displacement $u$ as~\eqref{eq:enbal2}. Therefore, the pair displacement-crack given by Theorem~\ref{thm:main} satisfies~\eqref{eq:nonlin_enbal} if and only if $\Gamma_t=\Gamma_0$ for every $t\in[0,T]$, i.e., when the crack does not grow in time. This shows that also the nonlinear Kelvin-Voigt model of dynamic fracture exhibits the viscoelastic paradox, as it happens in~\cite{DM-Lar,Tasso} for the corresponding linear model.

We conclude the introduction by observing that the corresponding phase-field model of dynamic crack propagation has been analyzed by~\cite{Patel} (see also~\cite{Patel2}). This is the one in which, roughly speaking, for a fixed $\epsilon>0$ the crack set is replaced by a function $v_\epsilon$ which is 0 in a $\epsilon$-neighborhood of the crack and 1 far from it. More precisely, in~\cite{Patel} the author proved that there exists a pair $(u_\epsilon,v_\epsilon)$ which satisfies the elastodynamics system with the implicit constitutive law and the Griffith energy-dissipation balance for both the nonlinearities in~\eqref{eq:Gmodel}. Therefore, it could be interesting to understand in a future paper if there is a connection between these two models; in particular, if the viscoelastic paradox can also occur in the phase-field setting. 

The rest of the paper goes as follows: in Section~\ref{sec:not_prob} we introduce the mathematical framework of our model of dynamic fracture for viscoelastic material, and we fix the main assumptions on the reference set, the crack evolution, and the nonlinearity $G$ in the constitutive law. Moreover, in Definition~\ref{def:weak_sol} we give the notion of (weak) solution to problem~\eqref{eq:elastodynamics} and~\eqref{eq:nonlinKV_conlaw}, and we state our main existence result, which is Theorem~\ref{thm:main}. In Section~\ref{sec:exis} we prove Theorem~\ref{thm:main} by performing a discretisation-in-time scheme together with a regularisation of the nonlinearity $G$. At first, we find an approximate solution in each node of the discretisation of the regularised model. Then, in Lemma~\ref{lem:estM} we prove a discrete energy estimate, which allows us to pass to the limit when the parameter of the discretisation and regularisation goes to $0$. Finally, we show that under suitable regularity assumptions the solution is unique. We conclude the paper with Section~\ref{sec:enbalandvp}, where we prove that every regular solution to~\eqref{eq:elastodynamics} and~\eqref{eq:nonlinKV_conlaw} satisfies the energy-dissipation identity of Theorem~\ref{thm:enbal}. Afterwards, we consider the nonlinear Kelvin-Voigt system~\eqref{eq:elasto_nonlineKV}, and we use the energy-dissipation identity to show that this model exhibits the viscoelastic paradox.


\section{Notation and formulation of the model}\label{sec:not_prob}

\subsection{Notation} The space of $m\times d$ matrices with real entries is denoted by $\R^{m\times d}$; in case $m=d$, the subspace of symmetric matrices is denoted by $\R^{d\times d}_{sym}$. For any $A,B\in\R^{d\times d }$ we denote with $A\cdot B$ the Frobenius scalar product, namely $A\cdot B:=\Tr(A^TB)$. Given a function $u\colon\R^d\to\R^m$, we denote its Jacobian matrix by $\nabla u$, whose components are $(\nabla u)_{ij}:= \partial_j u_i$ for $i\in\{1,\dots,m\}$ and $j\in\{1,\dots,d\}$; when $u\colon \R^d\to\R^d$, we use $eu$ to denote the symmetric part of the gradient, namely $eu:=\frac{1}{2}(\nabla u+\nabla u^T)$. Given a tensor field $A\colon \R^d\to\R^{m\times d}$, by $\div A$ we mean its divergence with respect to rows, namely $(\div A)_i:= \sum_{j=1}^d\partial_jA_{ij}$ for $i\in\{1,\dots,m\}$. 

We denote the $d$-dimensional Lebesgue measure by $\mathcal L^d$ and the $(d-1)$-dimensional Hausdorff measure by $\mathcal H^{d-1}$; given a bounded open set $\Omega$ with Lipschitz boundary, by $\nu$ we mean the outer unit normal vector to $\partial\Omega$, which is defined $\mathcal H^{d-1}$-a.e.\ on the boundary. The Lebesgue and Sobolev spaces on $\Omega$ are defined as usual; the boundary values of a Sobolev function are always intended in the sense of traces. 

The norm of a generic Banach space $X$ is denoted by $\|\cdot\|_X$; when $X$ is a Hilbert space, we use $(\cdot,\cdot)_X$ to denote its scalar product. We denote by $X'$ the dual of $X$ and by $\langle \cdot,\cdot\rangle_{X'}$ the duality product between $X'$ and $X$. Given two Banach spaces $X_1$ and $X_2$, the space of linear and continuous maps from $X_1$ to $X_2$ is denoted by $\mathscr L(X_1;X_2)$; given $\mathbb A\in\mathscr L(X_1;X_2)$ and $u\in X_1$, we write $\mathbb A u\in X_2$ to denote the image of $u$ under $\mathbb A$. 

Given an open interval $(a,b)\subset\R$ and $q\in[1,\infty]$, we denote by $L^q(a,b;X)$ the space of $L^q$ functions from $(a,b)$ to $X$; we use $W^{k,q}(a,b;X)$ to denote the Sobolev space of functions from $(a,b)$ to $X$ with derivatives up to order $k$ in $L^q(a,b;X)$. Given $u\in W^{1,q}(a,b;X)$, we denote by $\dot u\in L^q(a,b;X)$ its derivative in the sense of distributions. When dealing with an element $u\in W^{1,q}(a,b;X)$ we always assume $u$ to be the continuous representative of its class, and therefore, the pointwise value $u(t)$ of $u$ is well defined for all $t\in[a,b]$. We use $C_w^0([a,b];X)$ to denote the set of weakly continuous functions from $[a,b]$ to $X$, namely, the collection of maps $u\colon [a,b]\to X$ such that $t\mapsto \langle x',u(t)\rangle_{X'}$ is continuous from $[a,b]$ to $\R$, for all $x'\in X'$. 

\subsection{Mathematical framework} Let $T>0$ and $d\in\N$ with $d\ge 2$. Let $\Omega\subset\R^d$ be a bounded open set (which represents the reference configuration of the body) with Lipschitz boundary. Let $\partial_D\Omega$ be a Borel subset of $\partial\Omega$, on which we prescribe the Dirichlet condition, $\partial_N\Omega$ its complement in $\partial\Omega$, and $\Gamma\subset\overline\Omega$ the prescribed crack path. As in~\cite{CS,CS2}, we assume the following hypotheses on the geometry of the crack and the Dirichlet part of the boundary:
\begin{itemize}
\item[(E1)] $\Gamma$ is a closed set with $\mathcal L^d(\Gamma)=0$ and $\mathcal H^{d-1}(\Gamma\cap\partial\Omega)=0$;
\item[(E2)] $\Omega\setminus\Gamma$ is the union of two disjoint bounded open sets $\Omega_1$ and $\Omega_2$ with Lipschitz boundary;
\item[(E3)] $\partial_D\Omega\cap\partial\Omega_i$ contains the graph of a Lipschitz function $\theta_i$ over a non empty open subset of $\R^{d-1}$ for all $i\in\{1,2\}$;
\item[(E4)] $\{\Gamma_t\}_{t\in[0,T]}$ is a family of closed subsets of $\Gamma$ satisfying $\Gamma_s\subset\Gamma_t$ for all $0< s\le t\le T$.
\end{itemize}
We recall that the set $\Gamma_t$ represents the prescribed crack at time $t\in[0,T]$ inside $\Omega$.

Thanks to (E1)--(E4) for all $q\in[1,\infty]$ the space $L^q(\Omega\setminus\Gamma_t;\R^d)$ coincides with $L^q(\Omega;\R^d)$ for all $t\in[0,T]$. In particular, we can extend a function $u\in L^q(\Omega\setminus\Gamma_t;\R^d)$ to a function in $L^q(\Omega;\R^d)$ by setting $u=0$ on $\Gamma_t$. Moreover, for all $q\in[1,\infty)$ the trace of $u\in W^{1,q}(\Omega\setminus\Gamma;\R^d)$ is well defined on $\partial\Omega$ and there exists a constant $C_{tr}>0$, depending on $\Omega$, $\Gamma$, and $q$, such that
\begin{equation}\label{eq:traceM}
\|u\|_{L^q(\partial\Omega;\R^d)}\le C_{tr}\|u\|_{W^{1,q}(\Omega\setminus\Gamma;\R^d)}\quad\text{for all }u\in W^{1,q}(\Omega\setminus\Gamma;\R^d).
\end{equation}
Hence, we can define the space
$$W^{1,q}_D(\Omega\setminus\Gamma;\R^d):=\{u\in W^{1,q}(\Omega\setminus\Gamma;\R^d)\,:\, u=0\text{ on }\partial_D\Omega\}.$$
Furthermore, by using the second Korn inequality in $\Omega_1$ and $\Omega_2$ (see, e.g.,~\cite[Theorem~2.4]{OSY}) and taking the sum we can find a positive constant $C_K$, depending on $\Omega$, $\Gamma$, and $q$ such that 
\begin{equation}\label{eq:kornM}
\|\nabla u\|_{L^q(\Omega;\R^{d\times d})}\le C_K(\|u\|_{L^q(\Omega;\R^d)}^q+\|eu\|_{L^q(\Omega;\R^{d\times d}_{sym})}^q)^{\frac{1}{q}}\quad\text{for all }u\in W^{1,q}(\Omega\setminus\Gamma;\R^d).
\end{equation}
Similarly, thanks to the Korn-Poincaré inequality (see, e.g.,~\cite[Theorem~2.7]{OSY}) we obtain also the existence of a constant $C_{KP}$, depending on $\Omega$, $\Gamma$, $q$, and $\partial_D\Omega$, such that 
\begin{equation}\label{eq:kornpoi}
\|u\|_{W^{1,q}(\Omega\setminus\Gamma;\R^d)}\le C_{KP}\|eu\|_{L^q(\Omega;\R^{d\times d}_{sym})}\quad\text{for all $u\in W^{1,q}_D(\Omega\setminus\Gamma;\R^d)$}.
\end{equation}
Finally, for all $q\in(\frac{2d}{d+2},\infty]$ the embedding $W^{1,q}(\Omega\setminus\Gamma;\R^2)\hookrightarrow L^2(\Omega;\R^2)$ is continuous and compact. 

We fix $p\in (1,2^*)$, where $2^*$ is the Sobolev conjugate of $2$, defined as
$$2^*:=\begin{cases}
\infty &\text{for $d=2$},\\
\frac{2d}{d-2}&\text{for $d>2$}.
\end{cases}$$
Notice that $p\in (1,2^*)$ if and only if $p'\in (\frac{2d}{d+2},\infty)$, where $p':=\frac{p}{p-1}$ is the Hölder conjugate exponent of $p$. We set $H:=L^2(\Omega;\R^d)$ and we define the following spaces
\begin{equation*}
V:=W^{1,p'}(\Omega\setminus\Gamma;\R^d)\quad \text{ and }\quad V_t:= W^{1,p'}(\Omega\setminus\Gamma_t;\R^d)\quad \text{for all $t\in [0,T]$}.
\end{equation*}
We point out that in the definition of $V$ and $V_t$, we are considering only the distributional gradient of $u$ in $\Omega\setminus\Gamma$ and in $\Omega\setminus\Gamma_t$, respectively, and not the one in $\Omega$. Taking into account~\eqref{eq:kornM}, we shall use on the set $V_t$ (and also on the set $V$) the equivalent norm
\begin{equation*}
\|u\|_{V_t}:=\left(\|u\|_{p'}^{p'}+\|eu\|_{p'}^{p'}\right)^{\frac{1}{{p'}}}\quad\text{for all }u\in V_t.
\end{equation*}
Furthermore, by~\eqref{eq:traceM}, we can consider the sets
\begin{equation*}
V^D:=\{u\in V\,:\,u=0\text{ on }\partial_D\Omega\},\qquad V_t^D:=\{u\in V_t\,:\,u=0\text{ on }\partial_D\Omega\}\quad\text{for all $t\in[0,T]$},
\end{equation*}
which are closed subspaces of $V$ and $V_t$, respectively. 

\begin{remark}\label{rem:comp}
Since $p\in(1,2^*)$, by exploiting (E1)--(E4) we derive that for all $t\in[0,T]$ the space $V_t^D$ is a separable reflexive Banach space with embedding 
$$V_t^D\hookrightarrow H\text{ continuous, compact, and dense}.$$
In particular, the aforementioned condition on $p$ is used to deduce the compactness of $V_t^D$ in $H$. Therefore, the embedding $H\hookrightarrow (V_t^D)'$, which is defined by 
\begin{equation}\label{eq:dual}
\langle h,u\rangle_{(V_t^D)'}:=(h,u)_H\quad\text{for $h\in H$ and $u\in V_t^D$},
\end{equation}
is continuous, and the same holds true also for $V_t$, $V$, and $V^D$.
\end{remark}

Let us consider a nonlinear operator $G\colon \R^{d\times d}_{sym}\to\R^{d\times d}_{sym}$ satisfying the following assumptions:
\begin{itemize}
\item[(G1)] there exists a convex function $\phi\colon \R^{d\times d}_{sym}\to \R$ of class $C^1$ such that $G(\xi)=\nabla\phi(\xi)$ for all $\xi\in\R^{d\times d}_{sym}$;
\item[(G2)] there exist constants $b_1>0$ and $b_2\ge 0$ such that $G(\xi)\cdot \xi\ge b_1|\xi|^p-b_2$ for all $\xi\in\R^{d\times d}_{sym}$;
\item[(G3)] there exists a constant $b_3> 0$ such that $|G(\xi)|\le b_3(1+|\xi|^{p-1})$ for all $\xi\in\R^{d\times d}_{sym}$.
\end{itemize}

\begin{remark}\label{rem:phi0}
The assumption (G1) implies that $G$ is continuous and monotone, i.e.,
\begin{equation}\label{eq:monoton}
 (G(\xi_1)-G(\xi_2))\cdot (\xi_1-\xi_2)\ge 0\quad\text{for all $\xi_1,\xi_2\in\R^{d\times d}_{sym}$}.
\end{equation}
Moreover, up to add a constant, we always assume that $\phi(0)=0$.
\end{remark}

Given
\begin{itemize}
\item [(D1)] $f\in L^2(0,T;H)$;
\item [(D2)] $z\in W^{2,p'}(0,T;V_0)$;
\item [(D3)] $u^0,u^1\in V_0$ such that $u^0-z(0)\in V_0^D$ and $u^1-\dot z(0)\in V_0^D$;
\end{itemize}
we study the following dynamic viscoelastic system with implicit nonlinear constitutive law:
\begin{equation}\label{eq:nonlin-KV}
\begin{cases}
\ddot u(t)-\div(\sigma(t))=f(t)&\text{in $\Omega\setminus\Gamma_t$, $t\in[0,T]$},\\
G(\sigma(t))=eu(t)+e\dot u(t)&\text{in $\Omega\setminus\Gamma_t$, $t\in[0,T]$},
\end{cases}
\end{equation}
equipped with the boundary conditions
\begin{alignat}{4}
&u(t)=z(t) && \quad \text{on $\partial_D\Omega$}, & \quad t\in[0,T],\\
&\sigma(t)\nu=0 &&\quad\text{on $\partial_N\Omega\cup \Gamma_t$}, & \quad t\in[0,T],\label{eq:boundaryN}
\end{alignat}
where $\nu$ denotes the outward unit normal to $\partial\Omega$, and the initial conditions
\begin{alignat}{4}
&u(0)=u^0,\quad \dot{u}(0)=u^1&&\quad\text{in $\Omega\setminus \Gamma_0$}.\label{eq:initial}
\end{alignat}
Notice that in~\eqref{eq:nonlin-KV}--\eqref{eq:initial} the explicit dependence on $x$ is omitted to enlighten notation. As usual, the Neumann boundary conditions are only formal, and their meaning will be explained in Remark~\ref{rem:neumann}. 

From now on we always assume that $p\in(1,2^*)$ and that (E1)--(E4), (G1)--(G3), and (D1)--(D3) are satisfied. Let us define the following functional spaces:
\begin{align*}
&\mathcal V:=\{\varphi\in W^{1,p'}(0,T;V)\cap W^{1,\infty}(0,T;H)\,:\,\varphi(t)\in V_t\text{ for all $t\in[0,T]$}\},\\
&\D:=\{\varphi\in C^1_c(0,T;V)\,:\,\varphi(t)\in V_t^D\text{ for all $t\in[0,T]$}\}.
\end{align*}

Similarly to~\cite{DMT}, we introduce the following notion of weak solution.

\begin{definition}[Weak solution]\label{def:weak_sol}
A pair $(u,\sigma)\in\V\times L^p(0,T;L^p(\Omega,\R^{d\times d}_{sym}))$ is a {\it weak solution} to the nonlinear viscoelastic system~\eqref{eq:nonlin-KV}--\eqref{eq:boundaryN} if
\begin{itemize}
\item[(i)] $u(t)-z(t)\in V_t^D$ for all $t\in[0,T]$;
\item[(ii)] the following identity holds 
\begin{equation}\label{eq:wweak}
-\int_0^T(\dot u(t),\dot \varphi(t))_H\,\de t+\int_0^T(\sigma(t),e \varphi(t))_{p,p'}\,\de t=\int_0^T( f(t),\varphi (t))_H\,\de t\quad\text{or all $\varphi\in\D$,}
\end{equation}
where $(g,h)_{p,p'}:=\int_\Omega g(x)\cdot h(x)\,\de x$ for all $g\in L^p(\Omega;\R^{d\times d}_{sym})$ and $h\in L^{p'}(\Omega;\R^{d\times d}_{sym})$;
\item[(iii)] the constitutive law 
\begin{equation}\label{eq:conlaw}
G(\sigma(t))=eu(t)+e\dot u(t)\quad \text{in $\Omega\setminus\Gamma_t$ for a.e.\ $t\in[0,T]$}
\end{equation}
is satisfied.
\end{itemize}

\end{definition}

\begin{remark}\label{rem:neumann}
The Neumann boundary conditions~\eqref{eq:boundaryN} are formally used to pass from the strong formulation~\eqref{eq:nonlin-KV}--\eqref{eq:boundaryN} to the weak formulation~\eqref{eq:wweak}. Notice that, if $u(t)$, $\sigma(t)$, and $\Gamma_t$ are sufficiently regular, then~\eqref{eq:boundaryN} can be deduced from~\eqref{eq:wweak} by using integration by parts in space.
\end{remark}

We want to give a meaning to the initial conditions~\eqref{eq:initial} for a weak solution $(u,\sigma)$ to~\eqref{eq:nonlin-KV}--\eqref{eq:boundaryN}. To this aim, we first recall the following result (see, for instance~\cite[Chapitre XVIII, \S 5, Lemme 6]{DL}).

\begin{lemma}\label{lem:wcM}
Let $X,Y$ be reflexive Banach spaces such that $X\hookrightarrow Y$ continuously. Then 
$$L^{\infty}(0, T;X)\cap C^0_w([0, T];Y)= C^0_w([0, T];X).$$
\end{lemma}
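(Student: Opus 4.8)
\emph{Plan.} The inclusion $C^0_w([0,T];X)\subseteq L^\infty(0,T;X)\cap C^0_w([0,T];Y)$ is the elementary one. If $u\in C^0_w([0,T];X)$, then for every $x'\in X'$ the scalar map $t\mapsto\langle x',u(t)\rangle_{X'}$ is continuous on the compact interval $[0,T]$, hence bounded; applying the Banach--Steinhaus theorem to the family $\{u(t)\}_{t\in[0,T]}$, viewed as bounded linear functionals on the Banach space $X'$ via the canonical isometry $X\hookrightarrow X''$, yields $\sup_{t\in[0,T]}\|u(t)\|_X<\infty$, so $u\in L^\infty(0,T;X)$. Moreover, since $X\hookrightarrow Y$ continuously, each $y'\in Y'$ restricts to an element of $X'$, hence $t\mapsto\langle y',u(t)\rangle_{Y'}$ is continuous and $u\in C^0_w([0,T];Y)$.

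For the reverse (and substantial) inclusion, let $u\in L^\infty(0,T;X)\cap C^0_w([0,T];Y)$ and set $M:=\|u\|_{L^\infty(0,T;X)}$, so that the ($Y$-weakly continuous, hence everywhere defined) representative satisfies $u(t)\in X$ with $\|u(t)\|_X\le M$ for a.e.\ $t\in[0,T]$. The first step is to upgrade this to \emph{every} $t$: fix $t_0\in[0,T]$ and choose $t_k\to t_0$ with $u(t_k)\in X$ and $\|u(t_k)\|_X\le M$. By reflexivity of $X$, along a subsequence $u(t_{k_j})\rightharpoonup\xi$ weakly in $X$ for some $\xi\in X$ with $\|\xi\|_X\le M$; since the embedding $X\hookrightarrow Y$ is linear continuous, hence weak-to-weak continuous, also $u(t_{k_j})\rightharpoonup\xi$ weakly in $Y$. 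On the other hand $u\in C^0_w([0,T];Y)$ forces $u(t_{k_j})\rightharpoonup u(t_0)$ weakly in $Y$, and since $Y'$ separates points of $Y$ and the embedding is injective we conclude $u(t_0)=\xi\in X$ with $\|u(t_0)\|_X\le M$. Thus $u(t)\in X$ for all $t\in[0,T]$ and $\sup_{t\in[0,T]}\|u(t)\|_X\le M$.

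The second step is weak continuity in $X$. Suppose, for contradiction, that for some $t_0\in[0,T]$, some $x'\in X'$, some $\varepsilon>0$ and some sequence $t_k\to t_0$ one has $|\langle x',u(t_k)-u(t_0)\rangle_{X'}|\ge\varepsilon$ for all $k$. Since $\|u(t_k)\|_X\le M$, reflexivity of $X$ gives a subsequence with $u(t_{k_j})\rightharpoonup\eta$ weakly in $X$, hence weakly in $Y$; comparing with $u(t_{k_j})\rightharpoonup u(t_0)$ weakly in $Y$ (from $u\in C^0_w([0,T];Y)$), the separation argument above gives $\eta=u(t_0)$, so $u(t_{k_j})\rightharpoonup u(t_0)$ weakly in $X$, contradicting $|\langle x',u(t_{k_j})-u(t_0)\rangle_{X'}|\ge\varepsilon$. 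Hence $t\mapsto\langle x',u(t)\rangle_{X'}$ is continuous for every $x'\in X'$, i.e.\ $u\in C^0_w([0,T];X)$.

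\emph{Main obstacle.} The only genuinely delicate point is the first step of the reverse inclusion: reconciling the a.e.-defined $X$-valued $L^\infty$-representative with the everywhere-defined $Y$-weakly continuous representative, and in particular showing that the latter actually takes values in $X$ at \emph{every} time with a uniform bound. This is exactly where reflexivity of $X$ (to extract weak limits of bounded sequences) and the injectivity and continuity of $X\hookrightarrow Y$ (to identify those limits with the prescribed values in $Y$) enter; the remaining weak-continuity statement is then a routine subsequence-and-contradiction argument.
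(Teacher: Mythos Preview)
Your argument is correct. Note, however, that the paper does not supply its own proof of this lemma: it is stated as a recalled result with a reference to Dautray--Lions~\cite[Chapitre~XVIII, \S5, Lemme~6]{DL}, so there is no in-paper proof to compare against. What you have written is essentially the standard proof one finds in that reference: upgrade the a.e.\ $X$-bound to an everywhere bound via reflexivity and identification of weak limits through the embedding, then obtain weak continuity in $X$ by the same subsequence argument. The one point worth making slightly more explicit is the reconciliation of representatives at the start of the reverse inclusion---namely, that the $L^\infty(0,T;X)$ representative and the $C^0_w([0,T];Y)$ representative agree a.e.\ in $Y$, and injectivity of the embedding then gives $u(t)\in X$ with $\|u(t)\|_X\le M$ for a.e.\ $t$; you allude to this but a sentence spelling it out would make the proof self-contained.
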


Moreover, we need the following regularity result for the weak solutions to~\eqref{eq:nonlin-KV}--\eqref{eq:boundaryN}.

\begin{lemma}
Let $(u,\sigma)\in\mathcal V\times L^p(0,T;L^p(\Omega,\R^{d\times d}_{sym}))$ be a weak solution to the nonlinear viscoelastic system~\eqref{eq:nonlin-KV}--\eqref{eq:boundaryN}. Then $u\in W^{2,q}(0,T;(V_0^D)')$, where $q:=\min\{2,p\}$. In particular $u\in C^0([0,T];V)$ and $\dot u\in C^0_w([0,T];H)$.
\end{lemma}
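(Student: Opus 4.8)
The plan is to read the second time derivative of $u$ off the weak formulation~\eqref{eq:wweak} by restricting to test functions of product type. First I would record the elementary inclusions of spaces: since $\Gamma_0\subset\Gamma_t$ for every $t\in[0,T]$ by (E4), a function in $V_0$ (whose distributional symmetric gradient lives on the larger set $\Omega\setminus\Gamma_0$) restricts to a function in $V_t$, and because the $V_t$-norm is built only from $L^{p'}$-norms over $\Omega$ this inclusion is isometric; the same is true for the Dirichlet subspaces, so $V_0^D\hookrightarrow V_t^D$ and, dually, $(V_t^D)'\hookrightarrow(V_0^D)'$ with norm at most one. In particular, for a.e.\ $t$ the linear functional $v\mapsto-(\sigma(t),ev)_{p,p'}$ defines an element $\div\sigma(t)\in(V_0^D)'$ with $\|\div\sigma(t)\|_{(V_0^D)'}\le\|\sigma(t)\|_{L^p(\Omega;\R^{d\times d}_{sym})}$, so that $\div\sigma\in L^p(0,T;(V_0^D)')$. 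Combined with $f\in L^2(0,T;H)\hookrightarrow L^2(0,T;(V_0^D)')$ via~\eqref{eq:dual}, the function $F:=f+\div\sigma$ belongs to $L^q(0,T;(V_0^D)')$ with $q=\min\{2,p\}$.

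Next I would insert $\varphi(t)=\psi(t)v$ with $\psi\in C^1_c(0,T)$ and $v\in V_0^D$ into~\eqref{eq:wweak}; this is an admissible choice in $\D$ precisely because $v\in V_0^D\subset V_t^D$ for every $t$. Using~\eqref{eq:dual} to identify $(\cdot,v)_H$ with $\langle\cdot,v\rangle_{(V_0^D)'}$ on $H$, the identity reduces to $\int_0^T\dot\psi(t)\langle\dot u(t),v\rangle_{(V_0^D)'}\,\de t=-\int_0^T\psi(t)\langle F(t),v\rangle_{(V_0^D)'}\,\de t$ for all such $\psi$ and $v$. Since $\dot u\in L^\infty(0,T;H)\hookrightarrow L^1(0,T;(V_0^D)')$ and $V_0^D$ is separable, letting $v$ run over a countable dense subset of $V_0^D$ shows that $F$ is the distributional derivative of the $(V_0^D)'$-valued map $\dot u$; hence $\dot u\in W^{1,q}(0,T;(V_0^D)')$ and $u\in W^{2,q}(0,T;(V_0^D)')$.

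For the two consequences: $u\in C^0([0,T];V)$ is immediate from $u\in W^{1,p'}(0,T;V)$, since the primitive of an $L^1$-in-time Banach-valued function is absolutely continuous. For $\dot u\in C^0_w([0,T];H)$, I would observe that $W^{1,q}(0,T;(V_0^D)')\hookrightarrow C^0([0,T];(V_0^D)')\subset C^0_w([0,T];(V_0^D)')$, while $\dot u\in L^\infty(0,T;H)$ because $u\in W^{1,\infty}(0,T;H)$; as $H$ is reflexive and embeds continuously and densely (hence injectively) into the reflexive space $(V_0^D)'$ by Remark~\ref{rem:comp}, Lemma~\ref{lem:wcM} with $X=H$ and $Y=(V_0^D)'$ yields $\dot u\in C^0_w([0,T];H)$.

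The only genuinely delicate point is the last step of the second paragraph: upgrading the scalar identities, valid for each fixed test vector $v$, to an identity between $(V_0^D)'$-valued distributions. This needs the a priori information that $\dot u$ is itself an $L^1(0,T;(V_0^D)')$ map (supplied by $\dot u\in L^\infty(0,T;H)$ and $H\hookrightarrow(V_0^D)'$) together with the separability of $V_0^D$; everything else is bookkeeping with the chain of continuous embeddings and with the definition of the weak divergence. A minor verification along the way is that $(V_0^D)'$ is reflexive, which holds since $V_0^D$ is reflexive by Remark~\ref{rem:comp}.
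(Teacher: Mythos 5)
Your proposal is correct and follows essentially the same route as the paper's proof: define the $(V_0^D)'$-valued functional (your $F=f+\div\sigma$ is the paper's $\Lambda$), test~\eqref{eq:wweak} with $\varphi(t)=\psi(t)v$ for $\psi\in C^1_c(0,T)$ and $v\in V_0^D$ to read off the second distributional derivative, then conclude $u\in W^{2,q}(0,T;(V_0^D)')$ and use $u\in W^{1,p'}(0,T;V)$ and Lemma~\ref{lem:wcM} for the continuity statements. The only difference is cosmetic: you pass through a countable dense subset of $V_0^D$ and verify reflexivity of $(V_0^D)'$ explicitly, whereas the paper simply notes that the two Bochner integrals agree against every $v\in V_0^D$ and hence coincide in $(V_0^D)'$.
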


\begin{proof}
Let us set $q:=\min\{2,p\}$. We define $\Lambda\in L^q(0,T;(V^D_0)')$ in the following way: 
\begin{equation*}
 \langle \Lambda(t),v\rangle_{(V_0^D)'}:=-(\sigma(t),ev)_{p,p'}+(f (t),v)_H\quad \text{for all $v\in V^D_0$ and for a.e.\ $t\in [0,T]$}.
\end{equation*}
Let us consider a test function $\psi\in C^{1}_c(0,T)$, then for all $v\in V^D_0$ the function $\varphi(t):=\psi(t)v$ satisfies
\begin{equation}\label{eq:app}
 \varphi\in C^{1}_c(0,T;V),\quad \varphi(t)\in V_0^D\subset V_t^D \quad\text{for all $t\in[0,T]$}.
\end{equation}
Thanks to~\eqref{eq:wweak}, since $\varphi\in \D$ from~\eqref{eq:app}, we can write
\begin{align*}
-\int_0^T(\dot u(t),v)_H\dot\psi(t)\,\de t&=-\int_0^T(\sigma(t),ev)_{p,p'}\psi(t)\,\de t+\int_0^T(f (t),v)_H\psi(t)\,\de t=\int_0^T\langle \Lambda(t),v\rangle_{(V_0^D)'}\psi(t)\,\de t,
\end{align*}
which implies by~\eqref{eq:dual}
\begin{equation*}
\Big\langle-\int_0^T\dot u(t)\dot\psi(t)\,\de t,v\Big\rangle_{(V_0^D)'}=\Big\langle \int_0^T \Lambda(t)\psi(t)\,\de t,v\Big\rangle_{(V_0^D)'}\quad \text{for all $v\in V^D_0$}.
\end{equation*}
Hence, we get
\begin{equation}\label{eq:dersecq}
-\int_0^T\dot u(t)\dot\psi(t)\,\de t= \int_0^T \Lambda(t)\psi(t)\,\de t\quad \text{in $(V^D_0)'$ for all $\psi\in C^{1}_c(0,T)$.}
\end{equation}
Since $\dot u\in L^\infty(0,T;H)\hookrightarrow L^\infty(0,T;(V^D_0)')$ then identity~\eqref{eq:dersecq} implies
\begin{equation*}
u\in W^{2,q}(0,T;(V^D_0)').
\end{equation*}

Therefore $\dot u\in W^{1,q}(0,T;(V^D_0)')\hookrightarrow C^0([0,T];(V^D_0)')$, and since $\dot u\in L^{\infty}(0,T;H)$ by Lemma~\ref{lem:wcM} we deduce that $\dot u\in C^0_w([0,T];H)$. Finally, we have $W^{1,p'}(0,T;V)\hookrightarrow C^0([0,T];V)$ hence $u\in C^0([0,T];V)$.
\end{proof}

If $(u,\sigma)\in\V\times L^p(0,T;L^p(\Omega;\R^{d\times d}_{sym}))$ is a weak solution to~\eqref{eq:nonlin-KV}--\eqref{eq:boundaryN}, then $u(t)$ and $\dot u(t)$ are well defined as functions of $V$ and $H$, respectively, for all $t\in[0,T]$. Hence, it makes sense to evaluate them at time $t=0$ and we can introduce the following definition.

\begin{definition}[Initial conditions]
We say that a weak solution $(u,\sigma)\in\V\times L^p(0,T;L^p(\Omega;\R^{d\times d}_{sym}))$ to the nonlinear viscoelastic system~\eqref{eq:nonlin-KV}--\eqref{eq:boundaryN} satisfies the initial conditions~\eqref{eq:initial} if
\begin{equation*}
 u(0)=u^0\quad\text{in $V$},\quad \dot u(t)=u^1\quad\text{in $H$}
\end{equation*}
\end{definition}

The main existence result of this paper is the following theorem.

\begin{theorem}\label{thm:main}
There exists a weak solution $(u,\sigma)\in\mathcal V\times L^p(0,T;L^p(\Omega;\R^{d\times d}_{sym}))$ to the nonlinear viscoelastic system~\eqref{eq:nonlin-KV}--\eqref{eq:boundaryN} satisfying the initial conditions~\eqref{eq:initial}. Moreover, $u\in W^{2,2}(0,T;H)$.
\end{theorem}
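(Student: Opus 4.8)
The plan is to prove Theorem~\ref{thm:main} by a discretisation-in-time scheme combined with a regularisation of the nonlinear operator $G$, following the strategy of~\cite{BuPaSuSe,DM-Lar}. Since $G$ is only monotone (not strictly monotone), the constitutive law cannot be inverted directly; so I would first fix a small parameter $\varepsilon>0$ and replace $G$ by $G_\varepsilon(\xi):=G(\xi)+\varepsilon|\xi|^{p-2}\xi$, which is strictly monotone and coercive, hence invertible with inverse $G_\varepsilon^{-1}$ mapping $\R^{d\times d}_{sym}$ onto itself. Then the constitutive relation $G_\varepsilon(\sigma)=eu+e\dot u$ can be rewritten as $\sigma=G_\varepsilon^{-1}(eu+e\dot u)$, turning~\eqref{eq:nonlin-KV} into a single (nonlinear, doubly-nonlinear-type) equation for $u$ alone.

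Next I would set up the time discretisation: fix $n\in\N$, let $\tau=T/n$, and for $k=1,\dots,n$ solve inductively the elliptic problem for $u_n^k\in V_{k\tau}$ with $u_n^k-z(k\tau)\in V_{k\tau}^D$, namely
\begin{equation*}
\left(\frac{u_n^k-2u_n^{k-1}+u_n^{k-2}}{\tau^2},v\right)_H+\left(G_\varepsilon^{-1}\!\left(eu_n^{k-1}+\frac{e u_n^k-eu_n^{k-1}}{\tau}\right),ev\right)_{p,p'}=(f_n^k,v)_H
\end{equation*}
for all $v\in V_{k\tau}^D$, where $f_n^k$ is a suitable time-average of $f$ and $u_n^0,u_n^{-1}$ encode the initial data $u^0,u^1$. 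Existence of $u_n^k$ at each step follows from the theory of monotone operators (Browder–Minty / surjectivity of coercive monotone maps on the reflexive space $V_{k\tau}^D$), since the left-hand side, as an operator in $u_n^k$, is the sum of a strictly monotone coercive term coming from $G_\varepsilon^{-1}$ and a bounded positive term from the discrete second derivative. Then I would define the piecewise-affine and piecewise-constant interpolants $u_n$, $\tilde u_n$, $\sigma_n$ in time, and prove the discrete energy estimate of Lemma~\ref{lem:estM}: testing the scheme with $v=u_n^k-u_n^{k-1}$ (discrete version of $\dot u$), summing over $k$, and using the convexity/coercivity hypotheses (G1)--(G2) on $G$ together with $\phi(0)=0$, one obtains bounds for $\dot u_n$ in $L^\infty(0,T;H)$, for $u_n$ in $L^\infty(0,T;V)$, for $e\dot u_n$ in $L^{p'}$-type spaces, and for $\sigma_n$ in $L^p(0,T;L^p)$, all uniform in $n$ (and, with care, in $\varepsilon$). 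The handling of the boundary datum $z$ via the decomposition $u_n^k=(u_n^k-z(k\tau))+z(k\tau)$ and the assumption (D2) is routine but must be done to close the estimate.

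With these uniform bounds, I would extract (weakly-$*$) convergent subsequences as $n\to\infty$ (for fixed $\varepsilon$), obtaining a limit pair $(u_\varepsilon,\sigma_\varepsilon)$; passing to the limit in the discrete weak formulation is straightforward for the linear terms, giving~\eqref{eq:wweak}. Then one lets $\varepsilon\to0$, again using the uniform bounds and weak compactness, to get $(u,\sigma)\in\mathcal V\times L^p(0,T;L^p(\Omega;\R^{d\times d}_{sym}))$ satisfying~\eqref{eq:wweak} and condition (i); the bound on the discrete second derivative upgrades, via the argument of the Lemma preceding the theorem, to $u\in W^{2,2}(0,T;H)$ (here $q=\min\{2,p\}$ is improved to $2$ because the $H$-bound on $\ddot u$ is the one that survives). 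The initial conditions are recovered from the weak continuity $u\in C^0([0,T];V)$, $\dot u\in C^0_w([0,T];H)$ together with the way $u^0,u^1$ were built into $u_n^0,u_n^{-1}$. The main obstacle — and the only genuinely nonlinear step — is identifying the weak limit $\sigma$ with $G_\varepsilon^{-1}(eu+e\dot u)$ and then $G(\sigma)=eu+e\dot u$: this is the classical Minty-type monotonicity trick. One must show $\limsup_n\int_0^T(\sigma_n,e\dot u_n)\le\int_0^T(\sigma,e\dot u)$ (the hard inequality, requiring an energy identity in the limit, i.e. testing~\eqref{eq:wweak} with $u-z$-type functions — delicate because $\dot u$ is not an admissible test function in $\mathcal D$ and one must regularise in time and use the time-dependence of $V_t$), and combine it with monotonicity of $G_\varepsilon^{-1}$ and an arbitrary perturbation $eu+e\dot u\pm\lambda w$ to conclude $\sigma=G_\varepsilon^{-1}(eu+e\dot u)$ a.e.; the passage $\varepsilon\to0$ in this identification uses that $G_\varepsilon\to G$ locally uniformly and the $p$-growth (G3). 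I expect the bookkeeping around the moving domains $V_t$ (ensuring test functions stay admissible, and that interpolants land in the right $V_{k\tau}$) to be the most technically demanding part after the monotonicity argument.
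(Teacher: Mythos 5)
Your overall strategy --- time discretisation $+$ regularisation of $G$ $+$ Browder--Minty existence at each node $+$ discrete energy estimate $+$ Minty trick for the constitutive law --- does match the paper's, but there are two concrete gaps, and one structural difference worth noting.

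\textbf{The discretisation is misaligned with the coercivity.} You write the scheme with $G_\varepsilon^{-1}\bigl(eu_n^{k-1}+e\delta u_n^k\bigr)$, i.e.\ with the current node only inside the discrete velocity. The paper instead uses the fully implicit argument $eu_n^k+e\delta u_n^k$. This is not cosmetic: in the discrete energy estimate one tests with (scaled versions of) both $u_n^k-z_n^k$ and $\delta u_n^k-\delta z_n^k$, and after summing the key term is
\[
\sum_k\tau_n\bigl(G^{-1}(\,\cdot\,),\,eu_n^k+e\delta u_n^k\bigr)_{p,p'},
\]
so you want the argument of $G^{-1}$ to \emph{equal} $eu_n^k+e\delta u_n^k$ so that the coercivity~\eqref{eq:G2n} applies directly. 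With your semi-implicit choice the argument is $eu_n^{k-1}+e\delta u_n^k$ and the pairing produces a cross-term $\tau_n\,(G^{-1}(\cdot),e\delta u_n^k)$ that is not controlled by~\eqref{eq:G2n}; you would have to absorb it separately, and the subsequent discrete-Gronwall bookkeeping (which in the paper uses $(1+\tau_n)^ku_n^k=u^0+\sum_j\tau_n(1+\tau_n)^{j-1}(u_n^j+\delta u_n^j)$) no longer closes in the same way.

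\textbf{The $W^{2,2}(0,T;H)$ bound is not established.} You say the discrete second-derivative bound ``upgrades, via the argument of the Lemma preceding the theorem, to $u\in W^{2,2}(0,T;H)$'', but that lemma only gives $u\in W^{2,q}(0,T;(V_0^D)')$ with $q=\min\{2,p\}$ for \emph{any} weak solution; it cannot produce an $H$-valued second derivative. Testing the scheme with $u_n^k-u_n^{k-1}$, as you propose, gives $\max_i\|\delta u_n^i\|_H$ and $\sum_i\tau_n\|\delta u_n^i\|_V^{p'}$, but says nothing about $\sum_i\tau_n\|\delta^2 u_n^i\|_H^2$. The paper needs a separate estimate (its Corollary after Lemma~\ref{lem:estM}): one sets $v_n^k:=u_n^k+\delta u_n^k$, observes that $v_n^k$ solves a first-order-in-time discrete problem, tests with $\tau_n(\delta v_n^k-\delta z_n^k-\delta^2 z_n^k)$, and crucially uses that the Legendre transform $\phi_n^*$ of $\phi_n$ is convex with the bounds~\eqref{eq:phin0}--\eqref{eq:phins}, so that $\sum_k(G_n^{-1}(ev_n^k),ev_n^k-ev_n^{k-1})\ge\int_\Omega\phi_n^*(ev_n^i)-\int_\Omega\phi_n^*(ev_n^0)$ telescopes. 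This convex-potential argument is a genuinely separate idea from the first energy estimate, and your sketch omits it entirely; without it the claimed $W^{2,2}(0,T;H)$ regularity, which is also what makes the subsequent integration-by-parts and the choice $\varphi=u+\dot u-z-\dot z$ in the Minty step legitimate, is unsupported.

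\textbf{Single limit vs.\ double limit.} You decouple the regularisation parameter $\varepsilon$ from the mesh parameter $n$ and plan to send $n\to\infty$ first and $\varepsilon\to0$ second. The paper takes $\varepsilon=1/n$ and performs one limit. Your route can in principle be made to work, but it requires \emph{two} Minty-type identifications (one at fixed $\varepsilon$, one as $\varepsilon\to0$) and uniform-in-$\varepsilon$ versions of all the estimates; the paper's choice avoids the intermediate layer and lets the error $G-G_n$ vanish at rate $1/n$ (cf.\ the estimate $\|G(\sigma_n^+)-G_n(\sigma_n^+)\|\lesssim n^{-1}$), which feeds directly into the single Minty passage. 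If you keep the two-parameter scheme you should say why the $\varepsilon$-uniform bounds hold and how the second Minty passage is organised.

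A minor point: in the Minty step you write $\limsup_n\int_0^T(\sigma_n,e\dot u_n)\le\int_0^T(\sigma,e\dot u)$, but the correct quantity to control is the pairing with $e(u+\dot u)$, since that is what the constitutive law equates to $G(\sigma)$; the paper accordingly tests with $u+\dot u-z-\dot z$.
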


The proof of Theorem~\ref{thm:main} is postponed to the next section. We point out that the displacement $u$ of the solution found in Theorem~\ref{thm:main} is more regular in time, more precisely $\ddot u\in L^2(0,T;H)$. This regularity is used at the end of Section~\ref{sec:exis} to prove a uniqueness result for the nonlinear viscoelastic system ~\eqref{eq:nonlin-KV}--\eqref{eq:initial}. Moreover, we exploit such a regularity in Section~\ref{sec:enbalandvp} to show the energy-dissipation balance of Theorem~\ref{thm:enbal}. This identity implies the viscoelastic paradox, which is discussed at the end of the paper.


\section{Existence of solutions}\label{sec:exis}

This section is devoted to the proof of Theorem~\ref{thm:main}. As explained in the introduction, the main idea is to combine the discretisation-in-time scheme of~\cite{DM-Lar} with the regularisation of the nonlinear operator $G$ introduced in~\cite{BuPaSuSe}. Therefore, we rephrase the system~\eqref{eq:nonlin-KV} in a simpler way, and we use Browder-Minty Theorem to find a sequence of approximate solutions in each node of the discretisation scheme. Then in Lemma~\ref{lem:estM} we prove a discrete energy estimate and we use a compactness argument to obtain a pair $(u,\sigma)$ which solves~\eqref{eq:wweak} (see Lemma~\ref{lem:wweak}). Finally, in Lemma~\ref{lem:conlaw}, by performing a standard argument in the theory of nonlinear monotone operators we show the validity of the constitutive law~\eqref{eq:conlaw}.

Let us fix $n\in\N$ and set
\begin{align*}
&\tau_n:=\frac{T}{n}, & & u_n^0:=u^0, & &V_n^k:=V^D_{k\tau_n},\qquad z_n^k:=z(k\tau_n),& &\text{for $k\in\{0,\dots,n\}$},\\
&\delta u_n^0:=u^1, & & \delta z_n^0:=\dot z(0), & & f_n^k:=\dashint_{(k-1)\tau_n}^{k\tau_n} f(t)\de t, & &\text{for $k\in\{1,\dots,n\}$}.
\end{align*}
We define $G_n\colon \R^{d\times d}_{sym}\to\R^{d\times d}_{sym}$ as
$$G_n(\xi):=G(\xi)+\frac{1}{n}|\xi|^{p-2}\xi\quad\text{for all $\xi\in\R^{d\times d}_{sym}$}.$$
Notice that $G_n$ still satisfies (G1)--(G3) with $\phi$ replaced by 
$$\phi_n(\xi):=\phi(\xi)+\frac{1}{np}|\xi|^p\quad\text{for all $\xi\in\R^{d\times d}_{sym}$},$$
and with $b_3$ replaced by $b_3+1$. Since $G_n$ is strictly monotone, by the standard theory of monotone operators there exists the inverse operator $G_n^{-1}\colon \R^{d\times d}_{sym}\to\R^{d\times d}_{sym}$, which is still strictly monotone. Moreover, if we introduce the Legendre transform $\phi_n^*$ of $\phi_n$, defined as
$$\phi_n^*(\eta):=\sup_{\xi\in\R^{d\times d}_{sym}}\{\eta\cdot \xi-\phi_n(\xi)\}\quad\text{for all $\eta\in\R^{d\times d}_{sym}$},$$
by (G1)--(G3) we have that $\phi_n^*\colon\R^{d\times d}_{sym}\to\R$ is still a convex function of class $C^1$ and $G_n^{-1}$ satisfies 
\begin{alignat}{3}
&G_n^{-1}(\eta)=\nabla\phi_n^*(\eta)&&\quad\text{for all $\eta\in\R^{d\times d}_{sym}$},\label{eq:G1n}\\
& G_n^{-1}(\eta)\cdot \eta\ge c_1|\eta|^{p'}- c_2 &&\quad\text{for all $\eta\in\R^{d\times d}_{sym}$},\label{eq:G2n}\\
&|G_n^{-1}(\eta)|\le c_3(1+|\eta|^{p'-1}) &&\quad \text{for all $\eta\in\R^{d\times d}_{sym}$},\label{eq:G3n}
\end{alignat}
for suitable constants $c_1,c_3>0$ and $c_2\ge 0$ independent of $n\in\N$. Furthermore, if we define $\eta_0:=G(0)=G_n(0)$, by the assumption $\phi(0)=0$ (see Remark~\ref{rem:phi0}) we have
$$\phi_n^*(\eta_0)=-\phi_n(0)=0.$$
Therefore, thanks to the convexity of $\phi_n^*$ we derive
\begin{align}
&\phi_n^*(\eta)\ge \phi_n^*(\eta_0)+G_n^{-1}(\eta_0)\cdot (\eta-\eta_0)=0\quad\text{for all $\eta\in\R^{d\times d}_{sym}$},\label{eq:phin0}\\
&\phi_n^*(\eta)\le\phi_n^*(\eta_0)+G_n^{-1}(\eta)\cdot(\eta-\eta_0)\le c_4(1+|\eta|^{p'})\quad\text{for all $\eta\in\R^{d\times d}_{sym}$},\label{eq:phins}
\end{align}
for a suitable constant $c_4>0$ independent of $n\in\N$.

For all $k\in\{1,\dots,n\}$ we search for a function $u_n^k\in V$ with $u_n^k-z_n^k\in V_n^k$ satisfying the following identity
\begin{equation}\label{unkM}
(\delta^2u_n^k,\varphi)_H+(G^{-1}_n(eu_n^k+e\delta u_n^k),e\varphi)_{p,p'}=(f_n^k,\varphi)_H\quad \text{for all $\varphi\in V_n^k$}, 
\end{equation}
where
\begin{equation}\label{derivate}
 \delta u_n^k:=\frac{u_n^k-u_n^{k-1}}{\tau_n},\qquad \delta^2 u_n^k:=\frac{\delta u_n^k-\delta u_n^{k-1}}{\tau_n}\quad\text{for $k\in\{1,\dots,n\}$}.
\end{equation}
To this aim, we find a function $v_n^k\in V_n^k$ which solves 
\begin{equation}\label{eqv}
(\delta^2 v_n^k+\delta^2 z_n^k,\varphi)_H+(G^{-1}_n(ev_n^k+e\delta v_n^k+ez_n^k+e\delta z_n^k),e\varphi)_{p,p'}=(f_n^k,\varphi)_H\quad \text{for all $\varphi\in V_n^k$}, 
\end{equation}
where $\delta z_n^k$ and $\delta^2z_n^k$ are defined similarly to~\eqref{derivate} starting from $z_n^k$. Indeed, the function $v_n^k\in V_n^k$ solves~\eqref{eqv} if and only if $u_n^k:=v_n^k+z_n^k\in V$ satisfies $u_n^k-z_n^k=v_n^k\in V_n^k$ and~\eqref{unkM}. 

To solve~\eqref{eqv}, we consider the family of nonlinear operators $F_n^k\colon V_n^k\rightarrow (V_n^k)'$ defined by
\begin{align*}
\langle F_n^k(v),\varphi \rangle_{(V_n^k)'}:=&\tfrac{1}{\tau_n^2}(v+v_n^{k-2}-2v_n^{k-1}+\tau_n^2\delta^2z_n^k-\tau_n^2f_n^k,\varphi)_H\\
&+(G^{-1}_n((1+\tfrac{1}{\tau_n})ev-\tfrac{1}{\tau_n}v_n^{k-1}+ez_n^k+e\delta z_n^k),e\varphi)_{p,p'}
\end{align*}
for $v,\varphi\in V_n^k$. It is clear that $v_n^k\in V_n^k$ solves~\eqref{eqv} if and only if 
\begin{equation}\label{eq:Fnk}
F_n^k(v_n^k)=0\quad\text{in $(V_n^k)'$}. 
\end{equation}
To find a solution to~\eqref{eq:Fnk} we need the following result, whose proof can be found in~\cite{Browder,Minty}.

\begin{theorem}[Browder-Minty]\label{thm:minty}
Let $X$ be a reflexive Banach space and let $F\colon X\rightarrow X'$ be a monotone, hemicontinuous, and coercive operator. Then $F$ is surjective. Moreover, if $F$ is strictly monotone, then $F$ is also injective. 
\end{theorem}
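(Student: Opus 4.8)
The statement is the classical Browder--Minty theorem; the plan is to obtain surjectivity by a Galerkin approximation combined with Minty's monotonicity trick, and to read off injectivity directly from strict monotonicity. First I would reduce to showing that $0$ lies in the range of $F$: given $g\in X'$, the operator $F-g$ is still monotone and hemicontinuous, and it remains coercive because $\langle F(v)-g,v\rangle_{X'}/\|v\|_X\ge\langle F(v),v\rangle_{X'}/\|v\|_X-\|g\|_{X'}\to+\infty$, so it suffices to produce $u$ with $F(u)=0$. One may also assume $X$ separable, the general case following by the same argument run with nets indexed by the directed family of finite-dimensional subspaces. Fix then an increasing sequence of finite-dimensional subspaces $X_1\subset X_2\subset\cdots$ with $\overline{\bigcup_n X_n}=X$, and let $F_n\colon X_n\to X_n^*$ be given by $\langle F_n(v),w\rangle:=\langle F(v),w\rangle_{X'}$ for $v,w\in X_n$.

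Next I would solve the finite-dimensional problems. A monotone hemicontinuous operator is demicontinuous, so each $F_n$ is continuous on the finite-dimensional space $X_n$; by coercivity there is $R>0$, independent of $n$, with $\langle F(v),v\rangle_{X'}>0$ whenever $\|v\|_X=R$, hence $\langle F_n(v),v\rangle>0$ for $v\in X_n$ with $\|v\|_X=R$, and a standard consequence of Brouwer's fixed point theorem yields $u_n\in X_n$ with $\|u_n\|_X\le R$ and $F_n(u_n)=0$, that is, $\langle F(u_n),v\rangle_{X'}=0$ for all $v\in X_n$. As $(u_n)$ is bounded and $X$ is reflexive, up to a subsequence $u_n\rightharpoonup u$ in $X$.

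To identify the limit I would use Minty's trick: for fixed $m$, every $v\in X_m$ and every $n\ge m$ satisfy, by monotonicity together with the identity above, $0\le\langle F(u_n)-F(v),u_n-v\rangle_{X'}=-\langle F(v),u_n-v\rangle_{X'}$, so that $\langle F(v),v-u_n\rangle_{X'}\ge0$; letting $n\to\infty$ gives $\langle F(v),v-u\rangle_{X'}\ge0$ for all $v\in\bigcup_m X_m$, and then for all $v\in X$ by demicontinuity of $F$ and density. Choosing $v=u+tw$ with $t>0$, dividing by $t$, and letting $t\to0^+$ via hemicontinuity yields $\langle F(u),w\rangle_{X'}\ge0$ for every $w\in X$; replacing $w$ by $-w$ forces $F(u)=0$, which proves surjectivity. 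Finally, if $F$ is strictly monotone and $F(u_1)=F(u_2)$, then $0=\langle F(u_1)-F(u_2),u_1-u_2\rangle_{X'}$ forces $u_1=u_2$, so $F$ is injective.

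The hard part is the passage to the limit in the nonlinear term: $F$ need not send weakly convergent sequences to (weakly) convergent ones, so $\lim_n F(u_n)$ cannot be identified directly, and Minty's device of testing only against fixed elements $v$ is precisely what makes the weak convergence $u_n\rightharpoonup u$ sufficient. The remaining technical points are the demicontinuity of monotone hemicontinuous maps — needed both for the continuity of $F_n$ and for the density step — and, in the non-separable case, the substitution of nets for sequences.
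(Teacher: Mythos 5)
The paper does not prove Theorem~\ref{thm:minty}: it is stated as a black box and cited to Browder and Minty, so there is no internal proof to compare against. Your argument is the standard Galerkin-plus-Minty-trick proof of the surjectivity theorem for monotone operators, and it is essentially correct: the reduction to $0\in\mathrm{range}(F)$, the finite-dimensional solvability via the acute-angle corollary of Brouwer's theorem with the radius $R$ coming from coercivity, the weak compactness from reflexivity, and the passage to the limit via testing against fixed elements followed by $v=u+tw$, $t\to 0^+$, all go through. The injectivity under strict monotonicity is immediate, as you observe.

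Two technical points are worth flagging, both of which you already acknowledge but whose proofs are not entirely trivial. First, you invoke that a monotone hemicontinuous operator on all of $X$ is demicontinuous; this rests on the local boundedness of everywhere-defined monotone operators (Rockafellar), without which neither the continuity of the finite-dimensional restrictions $F_n$ nor the density step extending $\langle F(v),v-u\rangle_{X'}\ge 0$ from $\bigcup_m X_m$ to all of $X$ is available. Second, the density step itself needs the observation that if $v_k\to v$ strongly and $F(v_k)\rightharpoonup F(v)$, then $\langle F(v_k),v_k-u\rangle_{X'}\to\langle F(v),v-u\rangle_{X'}$, which uses the boundedness of $\{F(v_k)\}$ implied by its weak convergence. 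With those ingredients supplied, the proof is complete and matches the classical references the paper cites.
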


Let us show that $F_n^k$ satisfies the hypotheses of Theorem~\ref{thm:minty}.

\begin{proposition}\label{prop:FBM}
For every $n\in\N$ and $k\in\{1,\dots,n\}$ the nonlinear operator $F_n^k\colon V_n^k\rightarrow (V_n^k)'$ is strictly monotone, coercive, and hemicontinuous.
\end{proposition}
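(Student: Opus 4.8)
The plan is to verify the three properties of $F_n^k$ separately, each reducing to the corresponding property of the operator $\eta\mapsto G_n^{-1}(\eta)$ together with elementary manipulations of the affine substitution $v\mapsto (1+\tfrac{1}{\tau_n})ev+(\text{lower order})$.

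\textbf{Strict monotonicity.} Given $v_1,v_2\in V_n^k$, I would compute $\langle F_n^k(v_1)-F_n^k(v_2),v_1-v_2\rangle_{(V_n^k)'}$. The $H$-term contributes $\tfrac{1}{\tau_n^2}\|v_1-v_2\|_H^2\ge 0$, since the additive constants $v_n^{k-2}$, $v_n^{k-1}$, $\tau_n^2\delta^2 z_n^k$, $\tau_n^2 f_n^k$ cancel in the difference. For the second term, set $\eta_i:=(1+\tfrac{1}{\tau_n})ev_i-\tfrac{1}{\tau_n}v_n^{k-1}+ez_n^k+e\delta z_n^k$; then $\eta_1-\eta_2=(1+\tfrac{1}{\tau_n})e(v_1-v_2)$, and by monotonicity~\eqref{eq:monoton} of $G_n^{-1}$ (which holds since $G_n^{-1}=\nabla\phi_n^*$ with $\phi_n^*$ convex, see~\eqref{eq:G1n}),
$$\bigl(G_n^{-1}(\eta_1)-G_n^{-1}(\eta_2),e(v_1-v_2)\bigr)_{p,p'}=\tfrac{1}{1+\frac{1}{\tau_n}}\bigl(G_n^{-1}(\eta_1)-G_n^{-1}(\eta_2),\eta_1-\eta_2\bigr)_{p,p'}\ge 0.$$
Hence $\langle F_n^k(v_1)-F_n^k(v_2),v_1-v_2\rangle_{(V_n^k)'}\ge\tfrac{1}{\tau_n^2}\|v_1-v_2\|_H^2$, which vanishes only if $v_1=v_2$ in $H$, hence in $V_n^k$ by the embedding $V_n^k\hookrightarrow H$ (Remark~\ref{rem:comp}); so $F_n^k$ is strictly monotone. (Even if $\|v_1-v_2\|_H=0$ one can alternatively use the strict monotonicity of $G_n^{-1}$ together with the Korn--Poincaré inequality~\eqref{eq:kornpoi} on $V_n^k=V_{k\tau_n}^D$ to conclude $e(v_1-v_2)=0$, hence $v_1=v_2$.)

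\textbf{Coercivity.} I would estimate $\langle F_n^k(v),v\rangle_{(V_n^k)'}$ from below. The $H$-term is $\tfrac{1}{\tau_n^2}\|v\|_H^2$ plus a term linear in $v$ bounded by $C\|v\|_H$, so it is bounded below by $-C$. For the principal term, with $\eta:=(1+\tfrac{1}{\tau_n})ev-\tfrac{1}{\tau_n}v_n^{k-1}+ez_n^k+e\delta z_n^k$ write $ev=\tfrac{1}{1+\frac{1}{\tau_n}}(\eta-w)$ where $w:=-\tfrac{1}{\tau_n}v_n^{k-1}+ez_n^k+e\delta z_n^k$ is a fixed element of $L^{p'}$; then, using the coercivity bound~\eqref{eq:G2n} for $G_n^{-1}$ and the growth bound~\eqref{eq:G3n} to control the term $(G_n^{-1}(\eta),w)_{p,p'}$,
$$(G_n^{-1}(\eta),ev)_{p,p'}=\tfrac{1}{1+\frac{1}{\tau_n}}\bigl[(G_n^{-1}(\eta),\eta)_{p,p'}-(G_n^{-1}(\eta),w)_{p,p'}\bigr]\ge C_1\|\eta\|_{p'}^{p'}-C_2\|\eta\|_{p'}^{p'-1}-C_3,$$
and since $\|ev\|_{p'}\le C(\|\eta\|_{p'}+1)$ while $\|v\|_{V_n^k}\le C_{KP}\|ev\|_{p'}$ by~\eqref{eq:kornpoi}, Young's inequality gives $\langle F_n^k(v),v\rangle_{(V_n^k)'}\ge c\|v\|_{V_n^k}^{p'}-C$, which yields $\langle F_n^k(v),v\rangle_{(V_n^k)'}/\|v\|_{V_n^k}\to+\infty$ as $\|v\|_{V_n^k}\to\infty$ since $p'>1$.

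\textbf{Hemicontinuity.} This is the step I expect to be the only subtle one, since it must use the growth bound~\eqref{eq:G3n} together with the continuity of $G_n^{-1}$ (from~\eqref{eq:G1n}) and a dominated convergence argument. For fixed $v,w,\varphi\in V_n^k$ I would show $s\mapsto\langle F_n^k(v+sw),\varphi\rangle_{(V_n^k)'}$ is continuous on $[0,1]$. The $H$-part is clearly affine in $s$, hence continuous. For the nonlinear part, as $s\to s_0$ the integrand $G_n^{-1}\bigl((1+\tfrac{1}{\tau_n})e(v+sw)-\tfrac{1}{\tau_n}v_n^{k-1}+ez_n^k+e\delta z_n^k\bigr)\cdot e\varphi(x)$ converges pointwise a.e.\ by continuity of $G_n^{-1}$, and by~\eqref{eq:G3n} it is dominated uniformly in $s\in[0,1]$ by $c_3\bigl(1+|ev|+|ew|+\tfrac{1}{\tau_n}|v_n^{k-1}|+|ez_n^k|+|e\delta z_n^k|\bigr)^{p'-1}|e\varphi|$, which lies in $L^1(\Omega)$ by Hölder's inequality since the first factor is in $L^{p'}$ (so its $(p'-1)$-power is in $L^{p}$) and $e\varphi\in L^{p'}$. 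Dominated convergence then gives continuity, hence hemicontinuity. Combining the three properties, Theorem~\ref{thm:minty} applies to $F_n^k$.
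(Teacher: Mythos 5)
Your proof is correct and takes essentially the same route as the paper's: each of the three properties is reduced to the corresponding property of $G_n^{-1}$ (monotonicity from (G1), coercivity from the growth/coercivity bounds~\eqref{eq:G2n}--\eqref{eq:G3n} plus Korn--Poincaré and Young, hemicontinuity from continuity of $G_n^{-1}$ plus dominated convergence with the $p'$-growth majorant), with the affine substitution $\eta=c_n\,ev+h_n^k$ cancelling all lower-order terms in the monotonicity difference. The only cosmetic differences are that you present the coercivity estimate in a more compressed form and argue hemicontinuity at an arbitrary $s_0\in[0,1]$ rather than only at $t=0$, but the mechanics are the same.
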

\begin{proof}
Let us fix $n\in\N$ and $k\in\{1,\dots,n\}$. We start by proving that $F_n^k$ is a strictly monotone operator, i.e.,
\begin{equation*}
 \langle F_n^k(v)-F_n^k(w),v-w\rangle_{(V_n^k)'}>0 \quad \text{for all $v,w\in V_n^k$ with $v\neq w$.}
\end{equation*}
By the definition of $F_n^k$, for all $v,w\in V_n^k$ with $v\neq w$ we have
\begin{equation}\label{corc}
 \langle F_n^k(v)-F_n^k(w),v-w\rangle_{(V_n^k)'}=\frac{1}{\tau_n^2}\|v-w\|^2_H+(G^{-1}_n(c_n ev+h_n^k)-G^{-1}_n(c_n ew+h_n^k),ev-ew)_{p,p'},
\end{equation}
where 
$$c_n:=1+\tfrac{1}{\tau_n}>0,\quad h_n^k:=-\tfrac{1}{\tau_n}ev_n^{k-1}+ez_n^k+e\delta z_n^k\in V_n^k.$$
By using in~\eqref{corc} the monotonicity of $G_n^{-1}$ with $\eta_1=c_nev+h_n^k$ and $\eta_2=c_n ew+h_n^k$, we can write 
\begin{equation*}
 \langle F_n^k(v)-F_n^k(w),v-w\rangle_{(V_n^k)'}\ge \frac{1}{\tau_n^2}\|v-w\|^2_H>0,
\end{equation*}
which shows the strictly monotonicity of $F_n^k$.

To prove the coerciveness of $F_n^k$, we have to show that
\begin{equation}\label{coer}
\frac{\langle F_n^k(v),v\rangle_{(V_n^k)'}}{\|v\|_{V_n^k}}\rightarrow \infty\quad \text{as $\|v\|_{V_n^k}\to \infty$}.
\end{equation}
Notice that
\begin{align*}
\langle F_n^k(v),v\rangle_{(V_n^k)'} = & d_n\|v\|_H^2 + d_n(\ell_n^k,v)_H \\
&+\frac{1}{c_n}(G^{-1}_n(c_n ev+h_n^k),c_n ev+h_n^k)_{p,p'}-\frac{1}{c_n}(G^{-1}_n(c_n ev+h_n^k),h_n^k)_{p,p'},
 \end{align*}
where 
$$d_n:=\frac{1}{\tau_n^2}>0,\quad\ell_n^k:=v_n^{k-2}-2v_n^{k-1}+\tau_n^2\delta^2z_n^k+\tau_n^2f_n^k\in H.$$ Thanks to~\eqref{eq:G2n},~\eqref{eq:G3n}, and Young inequality, for all $\varepsilon>0$ we have
\begin{align}\label{Gest}
&\frac{1}{c_n}(G^{-1}_n(c_n ev+h_n^k),c_n ev+h_n^k)_{p,p'}-\frac{1}{c_n}(G^{-1}_n(c_n ev+h_n^k),h_n^k)_{p,p'}\nonumber\\
 &\geq \frac{c_1}{c_n}\|c_n ev+h_n^k\|^{p'}_{p'}-\frac{c_2}{c_n}\mathcal L^d(\Omega)-\frac{1}{c_n}\|G^{-1}_n(c_n ev+h_n^k)\|_{p}\|h_n^k\|_{p'}\nonumber\\
 &\geq\frac{c_1}{c_n}\|c_n ev+h_n^k\|^{p'}_{p'}-\frac{c_2}{c_n}\mathcal L^d(\Omega)-\frac{\varepsilon^p}{pc_n}\|G^{-1}_n(c_n ev+h_n^k)\|^p_{p}-\frac{1}{p'c_n\varepsilon^{p'}}\|h_n^k\|^{p'}_{p'}\nonumber\\
 &\geq \frac{c_1}{c_n}\|c_n ev+h_n^k\|^{p'}_{p'}-\frac{c_2}{c_n}\mathcal L^d(\Omega)-\frac{1}{p'c_n\varepsilon^{p'}}\|h_n^k\|^{p'}_{p'}-\frac{\varepsilon^p}{pc_n}(2^{p-1}c_3^p\|c_n ev+h_n^k\|^{p'}_{p'}+2^{p-1}c_3^p \mathcal L^d(\Omega))\nonumber\\
 &=\frac{1}{c_n}\Big(c_1-\frac{2^{p-1}c_3^p\varepsilon^p}{p}\Big)\|c_n ev+h_n^k\|^{p'}_{p'}-\frac{1}{p'c_n\varepsilon^{p'}}\|h_n^k\|^{p'}_{p'}-\frac{1}{c_n}\Big(c_2+\frac{2^{p-1}c_3^p\varepsilon^p}{p} \Big)\mathcal L^d(\Omega).
\end{align}
In particular, the Korn-Poincaré inequality~\eqref{eq:kornpoi} yields
\begin{equation*}
 \frac{c_n^{p'}}{C_{KP}^{p'}}\|v\|_{V_n^k}^{p'}\le \|c_n ev\|^{p'}_{p'}\leq 2^{p'-1}\|c_n ev+h_n^k\|^{p'}_{p'}+2^{p'-1}\|h_n^k\|^{p'}_{p'}.
\end{equation*}
Hence, from~\eqref{Gest} we deduce
\begin{align}\label{Gnm}
&\frac{1}{c_n}(G^{-1}_n(c_n ev+h_n^k),c_n ev+h_n^k)_{p,p'}-\frac{1}{c_n}(G^{-1}_n(c_n ev+h_n^k),h_n^k)_{p,p'}\nonumber\\
&\geq \frac{c_n^{p'-1}}{2^{p'-1}C_{KP}^{p'}}\Big(c_1-\frac{2^{p-1}c_3^p\varepsilon^p }{p}\Big)\|v\|^{p'}_{V_n^k}-\frac{1}{c_n}\Big(c_1-\frac{2^{p-1}c_3^p\varepsilon^p }{p}+\frac{1}{p'\varepsilon^{p'}}\Big)\|h_n^k\|^{p'}_{p'}\nonumber\\
&\quad-\frac{1}{c_n}\Big(c_2+\frac{2^{p-1}c_3^p\varepsilon^p}{p} \Big)\mathcal L^d(\Omega).
\end{align}
By applying again Young inequality we can write
\begin{align}\label{l2}
 &d_n\|v\|_H^2+d_n(\ell_n^k,v)_H\geq \frac{d_n}{2}\|v\|_H^2-\frac{d_n}{2}\|\ell_n^k\|_H^2.
\end{align}
If we choose
\begin{equation*}
0<\varepsilon<\left(\frac{c_1 p}{2^{p-1}c_3^p}\right)^{\frac{1}{p}},
\end{equation*}
thanks to~\eqref{Gnm} and~\eqref{l2} we obtain the existence a positive constant $K_1$ such that
\begin{align}\label{interm}
\langle F_n^k(v),v\rangle_{(V_n^k)'} \geq K_1\left(\|v\|_H^2+\| v\|^{p'}_{V_n^k}-\|h_n^k\|^{p'}_{p'}-\|\ell_n^k\|_H^2-1\right).
\end{align}
Clearly, we have
\begin{equation}
\frac{\|h_n^k\|^{p'}_{p'}+\|\ell_n^k\|_H^2+1}{\|v\|_{V_n^k}}\rightarrow 0\quad \text{as $\|v\|_{V_n^k}\to \infty$}.
\end{equation}
Moreover, we can write
\begin{equation}\label{ordsup}
 \frac{\|v\|_H^2+\| v\|^{p'}_{V_n^k}}{\|v\|_{V_n^k}}\geq \|v\|_{V_n^k}^{p'-1}\rightarrow\infty\quad \text{as $\|v\|_{V_n^k}\to \infty$}.
\end{equation}
Thanks to~\eqref{interm}--\eqref{ordsup} we get~\eqref{coer}.

To prove the hemicontinuity of $F_n^k$ we need to show that for all $u,v,w\in V_n^k$ there exists $t_0=t_0(u,v,w)$ such that the function $[-t_0,t_0]\ni t\mapsto \langle F_n^k(v+tu),w\rangle_{(V_n^k)'}$ is continuous in $t=0$. We fix $u,v,w\in V_n^k$ and we notice that
\begin{equation*}
 \langle F_n^k(v+tu),w\rangle_{(V_n^k)'}= d_n(v+\ell_n^k,w)_H+d_nt(u,w)_H+(G_n^{-1}(c_n(ev+teu)+h_n^k),ew)_{p,p'}.
\end{equation*}
Moreover, we can write
\begin{equation}\label{gqo}
 G_n^{-1}(c_n(ev+teu)+h_n^k)\cdot ew\xrightarrow[t\to 0]{a.e.\ }G_n^{-1}(c_nev+h_n^k)\cdot ew,
\end{equation}
and thanks to~\eqref{eq:G3n} we get
\begin{align}\label{gesti}
|(G_n^{-1}(c_n(ev+teu)+h_n^k),ew)_{p,p'}|&\leq \frac{1}{p}\|G_n^{-1}(c_n(ev+teu)+h_n^k)\|_p^p+\frac{1}{p'}\|ew\|^{p'}_{p'}\nonumber\\
&\leq \frac{2^{p-1}c_3^p}{p}\|c_n(ev+teu)+h_n^k\|_{p'}^{p'}+\frac{2^{p-1}c_3^p}{p}\mathcal L^d(\Omega)+\frac{1}{p'}\|ew\|^{p'}_{p'}\nonumber\\
&\leq K_2(\|c_n ev+h_n^k\|_{p'}^{p'}+\|eu\|_{p'}^{p'}+\|ew\|^{p'}_{p'}+1),
\end{align}
for a positive constant $K_2$. By using~\eqref{gqo},~\eqref{gesti}, and dominate convergence theorem~we obtain
\begin{equation}\label{Gleb}
 (G_n^{-1}(c_n(ev+teu)+h_n^k), ew)_{p,p'}\xrightarrow[t\to 0]{}(G_n^{-1}(c_nev+h_n^k), ew)_{p,p'}.
\end{equation}
Since $d_nt(u,w)_H\rightarrow 0$ as $t\to 0$, by~\eqref{Gleb} we have
\begin{equation*}
 \langle F_n^k(v+tu),w\rangle_{(V_n^k)'}\xrightarrow[t\to 0]{}d_n(v+\ell_n^k,w)_H+(G_n^{-1}(c_n ev+h_n^k),ew)_{p,p'}=\langle F_n^k(v),w\rangle_{(V_n^k)'},
\end{equation*}
which concludes the proof.
\end{proof}

Thanks to Theorem~\ref{thm:minty} and Proposition~\ref{prop:FBM} we obtain that for all $n\in\N$ and $k\in\{1,\dots,n\}$ the nonlinear operator $F_n^k\colon V_n^k\rightarrow (V_n^k)'$ is bijective, and hence there exists a unique $v_n^k\in V_n^k$ which solves~\eqref{eqv}. As a consequence, the function $u_n^k=v_n^k+z_n^k\in V$ is the unique solution to~\eqref{unkM}.

Let us define
\begin{equation}\label{eq:sigmani}
\sigma_n^k:=G_n^{-1}(eu_n^k+e\delta u_n^k)\quad\text{for all $k\in\{1,\dots,n\}$}.    
\end{equation}
In the next lemma we show a uniform energy estimate with respect to $n$ for the family $\{(u_n^k,\sigma_n^k)\}_{k=1}^n$, which will be used to pass to the limit as $n\to\infty$ in the discrete equation~\eqref{unkM}.

\begin{lemma}\label{lem:estM}
There exists a positive constant $C_1$, independent of $n\in\N$, such that 
\begin{align}
\max_{i\in\{1,\dots,n\}}\|u_n^i\|_V+\max_{i\in\{1,\dots,n\}}\|\delta u_n^i\|_H+\sum_{i=1}^n \tau_n\|\delta u_n^i\|^{p'}_{V}+\sum_{i=1}^n\tau_n\|\sigma_n^i\|^{p}_{p}\le C_1\label{eq:estM}.
\end{align}
\end{lemma}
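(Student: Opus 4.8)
The plan is to derive a single discrete energy inequality and to close it by a discrete Gronwall argument. Fix $i\in\{1,\dots,n\}$. For $k\in\{1,\dots,i\}$ I would test~\eqref{unkM} with $\varphi=\tau_n(\delta u_n^k-\delta z_n^k)$, which is admissible: $u_n^k-z_n^k\in V_n^k$ and $u_n^{k-1}-z_n^{k-1}\in V_n^{k-1}\subseteq V_n^k$ by~(E4), so $\delta u_n^k-\delta z_n^k\in V_n^k$, the case $k=1$ using $\delta u_n^0-\delta z_n^0=u^1-\dot z(0)\in V_0^D$ from~(D3). Recalling $\sigma_n^k=G_n^{-1}(eu_n^k+e\delta u_n^k)$ from~\eqref{eq:sigmani} and $\tau_n\delta^2u_n^k=\delta u_n^k-\delta u_n^{k-1}$, $eu_n^k-eu_n^{k-1}=\tau_n e\delta u_n^k$ from~\eqref{derivate}, I then sum over $k$. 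For the inertial term I use $2(a-b,a)_H=\|a\|_H^2-\|b\|_H^2+\|a-b\|_H^2$ after writing $\delta u_n^k-\delta u_n^{k-1}=\bigl[(\delta u_n^k-\delta z_n^k)-(\delta u_n^{k-1}-\delta z_n^{k-1})\bigr]+(\delta z_n^k-\delta z_n^{k-1})$: the first bracket telescopes to $\tfrac12\|\delta u_n^i-\delta z_n^i\|_H^2-\tfrac12\|u^1-\dot z(0)\|_H^2$ up to a nonnegative remainder, and the contributions of $\delta z_n^k-\delta z_n^{k-1}$ are controlled by $\|\delta z_n^k-\delta z_n^{k-1}\|_H\le\beta_k:=\int_{I_k}\|\ddot z(s)\|_H\,\de s$ for suitable intervals $I_k\subseteq(0,T)$ of length $\le2\tau_n$ with bounded overlap, so that $\sum_k\beta_k\le 2\|\ddot z\|_{L^1(0,T;H)}\le 2T^{1/p}\|\ddot z\|_{L^{p'}(0,T;H)}<\infty$ by~(D2). (Keeping this as an $L^1$-in-time bound, rather than an $L^2$ one, is what keeps us within~(D2), since $p'$ may be $<2$.)

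The heart of the estimate is the stress contribution, for which I would use two lower bounds for $\tau_n(\sigma_n^k,e\delta u_n^k)_{p,p'}$ and average them. Writing $\sigma_n^k=\nabla\phi_n^*(eu_n^k+e\delta u_n^k)$ and splitting pointwise
\[\sigma_n^k\cdot(eu_n^k-eu_n^{k-1})=\tau_n\bigl[\nabla\phi_n^*(eu_n^k+e\delta u_n^k)-\nabla\phi_n^*(eu_n^k)\bigr]\cdot e\delta u_n^k+\nabla\phi_n^*(eu_n^k)\cdot(eu_n^k-eu_n^{k-1}),\]
the first term is a nonnegative monotonicity gap of $\nabla\phi_n^*=G_n^{-1}$ and the second is $\ge\phi_n^*(eu_n^k)-\phi_n^*(eu_n^{k-1})$ by convexity of $\phi_n^*$; integrating, $\tau_n(\sigma_n^k,e\delta u_n^k)_{p,p'}\ge\int_\Omega(\phi_n^*(eu_n^k)-\phi_n^*(eu_n^{k-1}))\,\de x$. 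On the other hand, from $\tau_n(\sigma_n^k,e\delta u_n^k)_{p,p'}=\tau_n(\sigma_n^k,eu_n^k+e\delta u_n^k)_{p,p'}-\tau_n(\sigma_n^k,eu_n^k)_{p,p'}$ and~\eqref{eq:G2n},~\eqref{eq:G3n} with Young's inequality one gets $\tau_n(\sigma_n^k,e\delta u_n^k)_{p,p'}\ge c\tau_n\|eu_n^k+e\delta u_n^k\|_{p'}^{p'}+c\tau_n\|\sigma_n^k\|_p^p-C\tau_n\|eu_n^k\|_{p'}^{p'}-C\tau_n$. Averaging these, using that $\phi_n^*$ is $p'$-coercive uniformly in $n$ — which follows from~(G3) (giving $\phi_n(\xi)\le b_3|\xi|+\tfrac{b_3+1}{p}|\xi|^p$ for $n\ge1$, hence $\phi_n^*(\eta)\ge c_1'|\eta|^{p'}-c_2'$ by the definition of the Legendre transform) together with $\phi_n^*\ge0$ from~\eqref{eq:phin0} and $\int_\Omega\phi_n^*(eu^0)\,\de x\le c_4(\mathcal L^d(\Omega)+\|eu^0\|_{p'}^{p'})$ from~\eqref{eq:phins} — and bounding the Dirichlet datum stress term $\tau_n(\sigma_n^k,e\delta z_n^k)_{p,p'}$ and the forcing term $\tau_n(f_n^k,\delta u_n^k-\delta z_n^k)_H$ by Young's and Jensen's inequalities (reabsorbing the stress part into $\tau_n\|\sigma_n^k\|_p^p$), the summation over $k$ yields, for every $i$,
\[\tfrac12\|\delta u_n^i-\delta z_n^i\|_H^2+\tfrac{c_1'}2\|eu_n^i\|_{p'}^{p'}+c\sum_{k=1}^i\tau_n\bigl(\|eu_n^k+e\delta u_n^k\|_{p'}^{p'}+\|\sigma_n^k\|_p^p\bigr)\le C+\sum_{k=1}^i(C\tau_n+\beta_k)\bigl(\|\delta u_n^k-\delta z_n^k\|_H^2+\|eu_n^k\|_{p'}^{p'}\bigr)\]
with $\sum_k(C\tau_n+\beta_k)$ bounded uniformly in $n$.

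For $n$ large the $k=i$ term on the right is absorbed on the left, and discrete Gronwall gives $\max_i(\|\delta u_n^i-\delta z_n^i\|_H^2+\|eu_n^i\|_{p'}^{p'})\le C$ uniformly in $n$; adding the bounded $H$-norm of $\delta z_n^i$ bounds $\max_i\|\delta u_n^i\|_H$, and~\eqref{eq:kornpoi} applied to $u_n^i-z_n^i\in V_n^i$ together with~(D2) bounds $\max_i\|u_n^i\|_V$. Plugging back into the inequality bounds $\sum_i\tau_n\|\sigma_n^i\|_p^p$ and $\sum_i\tau_n\|eu_n^i+e\delta u_n^i\|_{p'}^{p'}$; using $\|e\delta u_n^i\|_{p'}\le\|eu_n^i+e\delta u_n^i\|_{p'}+\|eu_n^i\|_{p'}$, \eqref{eq:kornpoi} on $\delta u_n^i-\delta z_n^i\in V_n^i$, and~(D2) for the norms of $\delta z_n^i$, one obtains $\sum_i\tau_n\|\delta u_n^i\|_V^{p'}\le C$. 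Together these give~\eqref{eq:estM}.

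I expect the stress term to be the main obstacle: one needs a lower bound for $\tau_n(\sigma_n^k,e\delta u_n^k)_{p,p'}$ that simultaneously telescopes into a genuine, $n$-uniformly coercive, stored elastic energy and retains the viscous dissipation $\sum_k\tau_n\|eu_n^k+e\delta u_n^k\|_{p'}^{p'}$ and the bound on $\sum_k\tau_n\|\sigma_n^k\|_p^p$. The crucial algebraic point is the pointwise inequality $\sigma_n^k\cdot(eu_n^k-eu_n^{k-1})\ge\phi_n^*(eu_n^k)-\phi_n^*(eu_n^{k-1})$, where the identity $e\delta u_n^k=\tfrac1{\tau_n}(eu_n^k-eu_n^{k-1})$ is precisely what lets one trade a monotonicity gap of $\nabla\phi_n^*$ for a telescoping increment of $\phi_n^*$ — the discrete counterpart of the fact that $\int_\Omega\phi^*(eu)\,\de x$ is the natural stored energy for the constitutive law $G(\sigma)=eu+e\dot u$. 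A secondary difficulty is that the discrete acceleration $\delta^2u_n^k$ is not a priori controlled, which is why the nonhomogeneous Dirichlet datum must be handled using only $\|\ddot z\|_{L^{p'}(0,T;H)}$ rather than an $L^2$-in-time norm.
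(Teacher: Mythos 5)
Your proof is correct, but it takes a genuinely different route from the paper's. To obtain the coercive term, the paper tests the discrete equation~\eqref{unkM} with \emph{both} $\varphi=\tau_n(u_n^k-z_n^k)$ and $\varphi=\tau_n(\delta u_n^k-\delta z_n^k)$ and adds the two resulting identities: the stress contributions then combine into $\sum_k\tau_n(G_n^{-1}(eu_n^k+e\delta u_n^k),eu_n^k+e\delta u_n^k)_{p,p'}$, which~\eqref{eq:G2n} and the Korn--Poincar\'e inequality~\eqref{eq:kornpoi} bound below by $c\sum_k\tau_n\|u_n^k+\delta u_n^k\|_V^{p'}$; the separate bounds on $\max_i\|u_n^i\|_V$ and $\sum_i\tau_n\|\delta u_n^i\|_V^{p'}$ are then extracted from this sum by a discrete integrating-factor trick with $b_n^k=(1+\tau_n)^k$. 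You test only with $\varphi=\tau_n(\delta u_n^k-\delta z_n^k)$ and instead split the stress term pointwise, isolating a nonnegative monotonicity gap of $\nabla\phi_n^*$ plus the telescoping stored-energy increment $\phi_n^*(eu_n^k)-\phi_n^*(eu_n^{k-1})$, and you average this with the direct coercivity bound so that $\max_i\|eu_n^i\|_{p'}$, the viscous dissipation $\sum_k\tau_n\|eu_n^k+e\delta u_n^k\|_{p'}^{p'}$, and $\sum_k\tau_n\|\sigma_n^k\|_p^p$ all appear simultaneously on the left; this removes the need for the $(1+\tau_n)^k$ device, at the price of establishing the $n$-uniform $p'$-coercivity of $\phi_n^*$, which you correctly derive from (G3) and $\phi(0)=0$. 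The convexity splitting you identify as the heart of the estimate is in fact precisely what the paper deploys one step later, in the Corollary proving the bound~\eqref{sec-der-bound}, applied there to $ev_n^k=eu_n^k+e\delta u_n^k$; in effect you merge the lemma and that corollary into one energy inequality organised around $\phi_n^*$, which is arguably the more physical bookkeeping. Your treatment of the Dirichlet datum via the $L^1$-in-time weights $\beta_k$, using absolute continuity of the Lebesgue integral to absorb the $k=i$ term for $n$ large, is also a welcome refinement: the paper's bound $\tfrac12\sum_k\tau_n\|\delta z_n^k+\delta^2 z_n^k\|_H^2$ implicitly calls on an $L^2$-in-time control of $\ddot z$, which~(D2) guarantees only when $p'\ge 2$, whereas your argument needs only $\ddot z\in L^1(0,T;H)$.
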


\begin{proof}
We take $\varphi=\tau_n(u_n^k-z_n^k)\in V_n^k$ as a test function in~\eqref{unkM}. Therefore, we obtain
\begin{align}\label{us}
&\tau_n(G^{-1}_n(eu_n^k+e\delta u_n^k),e u_n^k-ez_n^k)_{p,p'}=\tau_n(f_n^k, u_n^k- z_n^k)_H-\tau_n(\delta^2 u_n^k, u_n^k-z_n^k)_H.
\end{align}
We fix $i\in\{1,\dots,n\}$ and by summing in~\eqref{us} for $k\in\{1,\dots,i\}$ we obtain
\begin{align}\label{us3}
&\sum_{k=1}^i\tau_n(G^{-1}_n(eu_n^k+e\delta u_n^k),e u_n^k)_{p,p'}\nonumber\\
&=\sum_{k=1}^i\tau_n(G^{-1}_n(eu_n^k+e\delta u_n^k),e z_n^k)_{p,p'}+\sum_{k=1}^i\tau_n(f_n^k, u_n^k- z_n^k)_H-\sum_{k=1}^i\tau_n(\delta^2u_n^k, u_n^k- z_n^k)_H.
\end{align}
Now we use $\varphi=\tau_n(\delta u_n^k-\delta z_n^k)\in V_n^k$ as a test function in~\eqref{unkM} and we get
\begin{align}\label{us2}
&\|\delta u_n^k\|_H^2-(\delta u_n^{k-1},\delta u_n^k)_H+\tau_n(G^{-1}_n(eu_n^k+e\delta u_n^k), e \delta u_n^k)_{p,p'}\nonumber\\
&=\tau_n( f_n^k, \delta u_n^k-\delta z_n^k)_H+\tau_n(G^{-1}_n(eu_n^k+e\delta u_n^k),e \delta z_n^k)_{p,p'}+\tau_n (\delta^2 u_n^k, \delta z_n^k)_H.
\end{align}
By means of the following identity
\begin{align*}
\|\delta u_n^k\|_H^2-(\delta u_n^{k-1},\delta u_n^k)_H=\frac{1}{2}\|\delta u_n^k\|_H^2-\frac{1}{2}\|\delta u_n^{k-1}\|_H^2+\frac{\tau_n^2}{2}\|\delta^2 u_n^k\|_H^2,
\end{align*}
from~\eqref{us2} we infer
\begin{align*}
&\frac{1}{2}\|\delta u_n^k\|_H^2-\frac{1}{2}\|\delta u_n^{k-1}\|_H^2+\frac{\tau_n^2}{2}\|\delta^2 u_n^k\|_H^2+\tau_n(G^{-1}_n(eu_n^k+e\delta u_n^k),e \delta u_n^k)_{p,p'}\nonumber\\
&=\tau_n( f_n^k, \delta u_n^k-\delta z_n^k)_H+\tau_n(G^{-1}_n(eu_n^k+e\delta u_n^k),e \delta z_n^k)_{p,p'}+\tau_n (\delta^2 u_n^k, \delta z_n^k)_H,
\end{align*}
and, by summing again for $k\in\{1,\dots,i\}$ we get
\begin{align}\label{nmr2}
&\frac{1}{2}\|\delta u_n^i\|_H^2-\frac{1}{2}\|u^1\|_H^2+\sum_{k=1}^i\tau_n(G^{-1}_n(eu_n^k+e\delta u_n^k),e \delta u_n^k)_{p,p'}\nonumber\\
&\leq\sum_{k=1}^i\tau_n(f_n^k, \delta u_n^k-\delta z_n^k)_H+\sum_{k=1}^i\tau_n(G^{-1}_n(eu_n^k+e\delta u_n^k),e \delta z_n^k)_{p,p'}+\sum_{k=1}^i\tau_n (\delta^2 u_n^k, \delta z_n^k)_H.
\end{align}

By considering together~\eqref{us3} and~\eqref{nmr2} we get
\begin{align*}
&\frac{1}{2}\|\delta u_n^i\|_H^2+\sum_{k=1}^i\tau_n(G^{-1}_n(eu_n^k+e\delta u_n^k),eu_n^k+e \delta u_n^k)_{p,p'}\\
&\leq\frac{1}{2}\|u^1\|_H^2+\sum_{k=1}^i\tau_n(G^{-1}_n(eu_n^k+e\delta u_n^k),ez_n^k+e \delta z_n^k)_{p,p'}\\
&\quad+\sum_{k=1}^i\tau_n(f_n^k, u_n^k+\delta u_n^k- z_n^k-\delta z_n^k)_H+\sum_{k=1}^i\tau_n (\delta^2 u_n^k, z_n^k+ \delta z_n^k)_H-\sum_{k=1}^i\tau_n (\delta^2 u_n^k, u_n^k)_H.
\end{align*}
Thanks to~\eqref{eq:G1n}--\eqref{eq:G3n} and the Korn-Poincaré inequality~\eqref{eq:kornpoi} we deduce from the previous estimate
\begin{align}\label{grwM}
&\frac{1}{2}\|\delta u_n^i\|_H^2+\frac{c_1}{C_{KP}K^{p'}}\sum_{k=1}^i\tau_n\|u_n^k+\delta u_n^k\|^{p'}_V \nonumber\\
&\leq c_2T\mathcal L^d(\Omega)+\frac{1}{2}\|u^1\|_H^2+\sum_{k=1}^i\tau_n(G^{-1}_n(eu_n^k+e\delta u_n^k),ez_n^k+e \delta z_n^k)_{p,p'}\nonumber\\
&\quad+\sum_{k=1}^i\tau_n(f_n^k, u_n^k+\delta u_n^k- z_n^k-\delta z_n^k)_H+\sum_{k=1}^i\tau_n (\delta^2 u_n^k, z_n^k+\delta z_n^k)_H-\sum_{k=1}^i\tau_n (\delta^2 u_n^k, u_n^k)_H.
\end{align}

Let us now estimate the right-hand side of~\eqref{grwM} from above. We can write
\begin{align}
&\left|\sum_{k=1}^i \tau_n(f_n^k, u_n^k+\delta u_n^k)_H\right| 
\leq \|f\|^2_{L^2(0,T;H)}+\frac{1}{2}\sum_{k=1}^i \tau_n\|u_n^k\|_H^2+\frac{1}{2}\sum_{k=1}^i \tau_n\|\delta u_n^k\|_H^2,\\
&\left|\sum_{k=1}^i \tau_n( f_n^k, z_n^k+\delta z_n^k)_H\right| 
\leq \|f\|^2_{L^2(0,T;H)}+\frac{T}{2}\|z\|_{L^\infty(0,T;H)}^2+\frac{1}{2}\|\dot z\|_{L^2(0,T;H)}^2.
\end{align}
Moreover
\begin{align}
 &\left|\sum_{k=1}^i\tau_n(G^{-1}_n(eu_n^k+e\delta u_n^k),ez_n^k+e \delta z_n^k)_{p,p'}\right|\nonumber\\
&\leq\frac{\varepsilon^p}{p}\sum_{k=1}^i\tau_n\|G^{-1}_n(eu_n^k+e\delta u_n^k)\|_p^p+\frac{1}{p'\varepsilon^{p'}}\sum_{k=1}^i\tau_n\|ez_n^k+e\delta z_n^k\|_{p'}^{p'}\nonumber\\
&\leq\frac{2^{p-1}c^p_3\varepsilon^p}{p}\sum_{k=1}^i\tau_n\|u_n^k+\delta u_n^k\|_V^{p'}+\frac{2^{p-1}c^p_3T \varepsilon^p}{p}\mathcal L^d(\Omega)+\frac{2^{p'-1}T}{p'\varepsilon^{p'}}\|z\|_{L^\infty(0,T;V)}^{p'}+\frac{2^{p'-1}}{p'\varepsilon^{p'}}\|\dot z\|_{L^{p'}(0,T;V)}^{p'}.
\end{align}
Notice that the following discrete integration by parts formulas hold
\begin{align}
 &\sum_{k=1}^i \tau_n(\delta^2 u_n^k,z_n^k+\delta z^k_n)_H=(\delta u_n^i,z_n^i+\delta z_n^i)_H-(\delta u_n^0,z_n^0+\delta z_n^0)_H-\sum_{k=1}^i\tau_n (\delta u_n^{k-1},\delta z_n^k+\delta^2 z_n^k)_H,\label{dis-part}\\
 &\sum_{k=1}^i \tau_n(\delta^2 u_n^k,u_n^k)_H=(\delta u_n^i,u_n^i)_H-(\delta u_n^0,u_n^0)_H-\sum_{k=1}^i\tau_n (\delta u_n^{k-1},\delta u_n^k)_H.\label{dis-part2}
\end{align}
Since
\begin{equation}\label{ti2}
 \sum_{k=1}^i \tau_n \|\delta u_n^{k-1}\|_H^2=\sum_{k=0}^{i-1} \tau_n \|\delta u_n^k\|_H^2\leq T\|u^1\|_H^2+\sum_{k=1}^i \tau_n \|\delta u_n^k\|_H^2,
\end{equation}
thanks to~\eqref{dis-part} we can write for all $\varepsilon>0$
\begin{align}
 &\Big| \sum_{k=1}^i \tau_n(\delta^2 u_n^k,z_n^k+\delta z^k_n)_H\Big|\nonumber\\
 &\leq \frac{\varepsilon}{2}\|\delta u_n^i\|_H^2+\frac{1}{2\varepsilon}\|z_n^i+\delta z_n^i\|_H^2+\|u^1\|_H\|z(0)+\dot z(0)\|_H+\frac{1}{2} \sum_{k=1}^i \tau_n \|\delta u_n^{k-1}\|_H^2+\frac{1}{2} \sum_{k=1}^i \tau_n \|\delta z_n^k+\delta^2 z_n^k\|_H^2\nonumber\\
 &\leq K_1^\varepsilon+\frac{\varepsilon}{2}\|\delta u_n^i\|_H^2+\frac{1}{2}\sum_{k=1}^i \tau_n \|\delta u_n^k\|_H^2,
\end{align}
where $K_1^\varepsilon$ is a positive constant depending on $\varepsilon$. Moreover, since $u_n^i=\sum_{k=1}^i\tau_n\delta u_n^k+u^0$ for all $i\in\{1,\dots,n\}$, the discrete Hölder inequality gives us
\begin{equation}\label{fudev}
 \|u_n^i\|_H\leq \sum_{k=1}^i\tau_n\|\delta u_n^k\|_H+\|u^0\|_H\leq T^{\frac{1}{2}}\left(\sum_{k=1}^i\tau_n\|\delta u_n^k\|_H^2\right)^{\frac{1}{2}}+\|u^0\|_H.
\end{equation}
Hence from~\eqref{dis-part2},~\eqref{ti2}, and~\eqref{fudev} we deduce
\begin{align}
\left| \sum_{k=1}^i \tau_n(\delta^2 u_n^k,u_n^k)_H\right|&\leq \frac{\varepsilon}{2}\|\delta u_n^i\|_H^2+\frac{1}{2\varepsilon}\|u_n^i\|_H^2+\|u^1\|_H\|u^0\|_H+\frac{1}{2} \sum_{k=1}^i \tau_n \|\delta u_n^{k-1}\|_H^2+\frac{1}{2} \sum_{k=1}^i \tau_n \|\delta u_n^k\|_H^2\nonumber\\
 &\leq K_2^\varepsilon+\frac{\varepsilon}{2}\|\delta u_n^i\|_H^2+\left(1+\frac{T}{\varepsilon}\right)\sum_{k=1}^i \tau_n \|\delta u_n^k\|_H^2,
\end{align}
where $K_2^\varepsilon$ is a positive constant depending on $\varepsilon$. Furthermore
\begin{equation}\label{parts2}
\frac{1}{2}\sum_{k=1}^i \tau_n\|u_n^k\|_H^2\le T\|u^0\|_H^2+T^2\sum_{k=1}^i\tau_n\|\delta u_n^i\|_H^2.
\end{equation}
If we consider together~\eqref{grwM}--\eqref{parts2}, we get
\begin{align*}
\Big(\frac{1}{2}-\varepsilon\Big)\|\delta u_n^i\|_H^2&+\left(\frac{c_1}{C_{KP}^{p'}}-\frac{2^{p-1}c^p_3\varepsilon^p}{p}\right)\sum_{k=1}^i\tau_n\|u_n^k+\delta u_n^k\|^{p'}_{V}\leq K_3^\varepsilon\left(1+\sum_{k=1}^i\tau_n\|\delta u_n^k\|_H^2\right),
\end{align*}
where $K_3^\varepsilon$ is a positive constant depending on $\varepsilon$. By choosing
\begin{equation*}
 0<\varepsilon<\min\left\{\frac{1}{2},\Big(\frac{c_1 p}{C_{KP}^{p'}2^{p-1}c^p_3}\Big)^{\frac{1}{p}}\right\}
\end{equation*}
we get the existence of a positive constant $K_4$ independent of $n$ and $i$ such that
\begin{align}\label{gg}
\|\delta u_n^i\|_H^2+\sum_{k=1}^i\tau_n\|u_n^k+\delta u_n^k\|^{p'}_{V}\leq K_4\left(1+\sum_{k=1}^i\tau_n\|\delta u_n^k\|_H^2\right).
\end{align}
By defining $a_n^i:=\|\delta u_n^i\|_H^2$ for all $i\in\{1,\dots,n\}$, from~\eqref{gg} we derive
$$a_n^i\leq K_4\left(1+\sum_{k=1}^i \tau_n a_n^k\right)\quad\text{for all $i\in\{1,\dots,n\}$},$$
and taking into account a discrete version of Gronwall lemma (see, e.g.,~\cite[Lemma~3.2.4]{AGS}) we deduce that the family $\{a_n^i\}_{i=1}^n$ is bounded by a positive constant $K_5$ independent of $i$ and $n$, i.e.,
\begin{equation}\label{vel}
\|\delta u_n^i\|^2\leq K_5\quad \text{for all $i\in\{1,\dots,n\}$ and $n\in\N$}.
\end{equation}
By using~\eqref{gg} and~\eqref{vel} we get the existence of a positive constant $K_6$ independent of $n$ such that 
\begin{equation}\label{mezz}
\max_{i\in\{1,\dots,n\}}\|\delta u_n^i\|_H^2+\sum_{i=1}^n \tau_n \|u_n^i+\delta u_n^i\|^{p'}_{V}\le K_6.
\end{equation}
In particular, by~\eqref{eq:G3n} and~\eqref{eq:sigmani} it holds
\begin{equation}\label{sigmank}
\sum_{i=1}^n\tau_n\|\sigma_n^i\|_p^p\le 2^{p-1}c_3^p\sum_{i=1}^n\tau_n\|eu_n^i+e\delta u_n^i\|_{p'}^{p'}+2^{p-1}c_3^p T\mathcal L^d(\Omega)\le 2^{p-1}c_3^pK_6+2^{p-1}c_3^p T\mathcal L^d(\Omega).
\end{equation}

To get the last estimate in~\eqref{eq:estM} we set $b_n^k:=(1+\tau_n)^k$ for $k\in\{0,\dots,n\}$ and we notice that 
\begin{equation}\label{reg}
 \frac{b_n^k-b_n^{k-1}}{\tau_n}=b_n^{k-1}\quad\text{for all $k\in\{1,\dots,n\}$}.
\end{equation}
From~\eqref{reg} we can write
\begin{align}\label{new}
b_n^ku_n^k-b^0u^0&=\sum_{j=1}^k(b_n^ju_n^j-b_n^{j-1}u_n^{j-1})\nonumber\\
&=\sum_{j=1}^k\tau_n \frac{b_n^j-b_n^{j-1}}{\tau_n}u_n^j+\sum_{j=1}^k\tau_n b_n^{j-1} \frac{u_n^j-u_n^{j-1}}{\tau_n}=\sum_{j=1}^k\tau_n b_n^{j-1}(u_n^j+\delta u_n^j).
\end{align}
Since
\begin{equation*}
 1\leq (1+\tau_n)^k\leq (1+\tau_n)^n=\left[\left(1+\frac{T}{n}\right)^{\frac{n}{T}}\right]^T\leq \e^T,
\end{equation*}
from~\eqref{new} we deduce the existence of a positive constant $K_7$ such that
\begin{equation}\label{bunk}
 \|u_n^k\|^{p'}_{V}\leq K_7\left(1+\sum_{j=1}^k\tau_n \|u_n^j+\delta u_n^j\|^{p'}_{V}\right)\leq K_7\left(1+K_6\right).
\end{equation}
As a consequence of this, we obtain
\begin{equation}\label{bdunk}
 \sum_{j=1}^k\tau_n \|\delta u_n^j\|^{p'}_{V}\leq 2^{p'-1}\sum_{j=1}^k\tau_n (\|u_n^j+\delta u_n^j\|^{p'}_{V}+\|u_n^j\|^{p'}_{V})\leq 2^{p'-1}\left(K_6+TK_7+TK_7K_6\right).
\end{equation}
Hence by considering together~\eqref{mezz},~\eqref{sigmank},~\eqref{bunk}, and~\eqref{bdunk} we get~\eqref{eq:estM}.
\end{proof}

As a consequence of~\eqref{eq:estM} and of the particular form of equation~\eqref{unkM}, we derive also a uniform bound on the discrete time second derivative of $u_n^k$ in the space $H$. This allow us to find in the limit as $n\to\infty$ a weak solution to~\eqref{eq:nonlin-KV}--\eqref{eq:boundaryN} with displacement $u\in W^{2,2}(0,T;H)$.

\begin{corollary}
There exists a constant $C_2$, independent of $n\in\N$, such that 
\begin{equation}\label{sec-der-bound}
\sum_{k=1}^n\tau_n\|\delta^2 u^k_n\|_H^2\leq C_2.
\end{equation}
\end{corollary}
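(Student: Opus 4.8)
The plan is to test the discrete equation~\eqref{unkM} with a suitable multiple of the discrete second derivative $\delta^2u_n^k$ and to sum over $k$; the decisive point is that the term carrying $\sigma_n^k$, which a priori has no sign, can be bounded from below by a telescoping sum through the convexity of the regularised potential $\phi_n^*$. First I would check that $\delta^2u_n^k-\delta^2z_n^k\in V_n^k$ for every $k\in\{1,\dots,n\}$: indeed $V_n^{k-1}\subset V_n^k$ by~(E4), $u^1-\dot z(0)\in V_0^D=V_n^0$ by~(D3), and $u_n^j-z_n^j\in V_n^j$ for all $j$, so both $\delta u_n^k-\delta z_n^k$ and $\delta u_n^{k-1}-\delta z_n^{k-1}$ lie in $V_n^k$, hence so does their rescaled difference. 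Taking $\varphi=\tau_n(\delta^2u_n^k-\delta^2z_n^k)$ in~\eqref{unkM} and recalling~\eqref{eq:sigmani} gives
\[
\tau_n\|\delta^2u_n^k\|_H^2=\tau_n(\delta^2u_n^k,\delta^2z_n^k)_H+\tau_n(f_n^k,\delta^2u_n^k-\delta^2z_n^k)_H-\tau_n(\sigma_n^k,e\delta^2u_n^k-e\delta^2z_n^k)_{p,p'},
\]
and summing over $k\in\{1,\dots,n\}$ leaves $\sum_{k=1}^n\tau_n\|\delta^2u_n^k\|_H^2$ on the left-hand side.

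The heart of the argument is the term $-\sum_{k=1}^n\tau_n(\sigma_n^k,e\delta^2u_n^k)_{p,p'}$. Setting $w_n^k:=eu_n^k+e\delta u_n^k$, so that $\sigma_n^k=\nabla\phi_n^*(w_n^k)$ by~\eqref{eq:G1n}, and using the identity $\tau_ne\delta^2u_n^k=(w_n^k-w_n^{k-1})-\tau_ne\delta u_n^k$, I would split this term as $-\sum_{k=1}^n(\sigma_n^k,w_n^k-w_n^{k-1})_{p,p'}+\sum_{k=1}^n\tau_n(\sigma_n^k,e\delta u_n^k)_{p,p'}$. The convexity of $\phi_n^*$ yields the pointwise inequality $\nabla\phi_n^*(w_n^k)\cdot(w_n^k-w_n^{k-1})\ge\phi_n^*(w_n^k)-\phi_n^*(w_n^{k-1})$; integrating over $\Omega$ and summing, the first sum telescopes, and since $\phi_n^*\ge0$ by~\eqref{eq:phin0} it is bounded below by $-\int_\Omega\phi_n^*(w_n^0)\,\de x$. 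Because $w_n^0=eu^0+eu^1$ is fixed by~(D3), estimate~\eqref{eq:phins} (with $c_4$ independent of $n$) bounds $\int_\Omega\phi_n^*(w_n^0)\,\de x$ by a constant independent of $n$, while the second sum is controlled by Young's inequality and~\eqref{eq:estM}. Thus $-\sum_{k=1}^n\tau_n(\sigma_n^k,e\delta^2u_n^k)_{p,p'}$ is bounded uniformly in $n$.

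It then remains to handle the data terms. By Young's inequality, the terms $\sum_{k=1}^n\tau_n(\delta^2u_n^k,\delta^2z_n^k)_H$ and $\sum_{k=1}^n\tau_n(f_n^k,\delta^2u_n^k)_H$ produce a contribution $\varepsilon\sum_{k=1}^n\tau_n\|\delta^2u_n^k\|_H^2$ that is absorbed on the left-hand side, at the cost of the lower-order quantities $\sum_{k=1}^n\tau_n\|\delta^2z_n^k\|_H^2$ and $\sum_{k=1}^n\tau_n\|f_n^k\|_H^2$; similarly $\sum_{k=1}^n\tau_n(f_n^k,\delta^2z_n^k)_H$ and $\sum_{k=1}^n\tau_n(\sigma_n^k,e\delta^2z_n^k)_{p,p'}$ are estimated by Young's inequality and~\eqref{eq:estM}, up to the quantity $\sum_{k=1}^n\tau_n\|\delta^2z_n^k\|_{V_0}^{p'}$. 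Each of these leftover quantities is bounded uniformly in $n$ by Jensen's inequality applied to the average $f_n^k$ and to the finite differences defining $\delta^2z_n^k$, using~(D1) and~(D2). Choosing $\varepsilon$ small enough and collecting the estimates then yields~\eqref{sec-der-bound}.

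The step I expect to be the main obstacle is the viscous term $-\sum_{k=1}^n\tau_n(\sigma_n^k,e\delta^2u_n^k)_{p,p'}$: neither $\sigma_n^k$ nor $e\delta^2u_n^k$ is controlled in a norm strong enough to close the estimate directly, and it is only the reformulation in terms of the increments of $w_n^k=eu_n^k+e\delta u_n^k$, combined with the $n$-uniform convexity bounds~\eqref{eq:phin0}--\eqref{eq:phins} on $\phi_n^*$, that converts it into a harmless telescoping quantity; everything else is routine Young/Jensen bookkeeping resting on Lemma~\ref{lem:estM} and the data assumptions.
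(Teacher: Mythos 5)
Your proof is correct and rests on exactly the same key mechanism as the paper's: the convexity bound $\nabla\phi_n^*(w_n^k)\cdot(w_n^k-w_n^{k-1})\ge\phi_n^*(w_n^k)-\phi_n^*(w_n^{k-1})$, telescoping, and the $n$-uniform estimates~\eqref{eq:phin0}--\eqref{eq:phins}. The only (cosmetic) difference is the choice of test function: the paper first rewrites~\eqref{unkM} in terms of $v_n^k=u_n^k+\delta u_n^k$ and tests with $\tau_n(\delta v_n^k-\delta z_n^k-\delta^2 z_n^k)$, so that the viscous term telescopes directly and $\sum_k\tau_n\|\delta^2 u_n^k\|_H^2$ is recovered at the end from the bound on $\sum_k\tau_n\|\delta v_n^k\|_H^2$ via Lemma~\ref{lem:estM}, whereas you test with $\tau_n(\delta^2 u_n^k-\delta^2 z_n^k)$ and therefore must split off and bound the extra lower-order term $\sum_k\tau_n(\sigma_n^k,e\delta u_n^k)_{p,p'}$ (which is indeed controlled by~\eqref{eq:estM} and Young's inequality) — an equivalent rearrangement of the same computation.
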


\begin{proof}
Let us define $v_n^k:=u_n^k+\delta u_n^k\in V$ for all $k\in\{1,\dots,n\}$ and $n\in\N$. By equation~\eqref{unkM} we deduce that $v_n^k$ solves
the following equation
\begin{equation*}
(\delta v_n^k,\varphi)_H+(G^{-1}_n(ev_n^k),e\varphi)_{p,p'}=(f_n^k+\delta u_n^k,\varphi)_H\quad \text{for all $\varphi\in V_n^k$}. 
\end{equation*}
We take $\varphi:=\tau_n(\delta v_n^k-\delta z_n^k-\delta^2 z_n^k)\in V_n^k$ as a test function in~\eqref{unkM}. We fix $i\in\{1,\dots,n\}$ and by summing over $k=1,\dots i$ we get
\begin{align}\label{eq:vnk_est}
&\sum_{k=1}^i\tau_n\|\delta v_n^k\|_H^2+\sum_{k=1}^i(G^{-1}_n(ev_n^k),e v_n^k-ev_n^{k-1})_{p,p'}\nonumber\\
&=\sum_{k=1}^i\tau_n(f_n^k+\delta u_n^k,\delta v_n^k)_H-\sum_{k=1}^i\tau_n(f_n^k+\delta u_n^k,\delta z_n^k+\delta^2 z_n^k)_H\nonumber\\
&\quad +\sum_{k=1}^i\tau_n (\delta v_n^k,\delta z_n^k+\delta^2 z_n^k)_H+\sum_{k=1}^i\tau_n(G^{-1}_n(ev_n^k),e\delta z_n^k+e\delta^2 z_n^k)_{p,p'}.
\end{align}
 
Let us now estimate the right-hand side of~\eqref{eq:vnk_est} from above. Thanks to~\eqref{eq:estM} we can write
\begin{align}
&\left|\sum_{k=1}^i \tau_n(f_n^k+\delta u_n^k, \delta v_n^k)_H\right| 
\leq \frac{1}{2\varepsilon}\|f\|^2_{L^2(0,T;H)}+\frac{T C_1^2}{2\varepsilon}+\varepsilon\sum_{k=1}^i \tau_n\|\delta v_n^k\|^2_H,\\
&\left|\sum_{k=1}^i \tau_n(f_n^k+\delta u_n^k, \delta z_n^k+\delta^2 z_n^k)_H\right| 
\leq \|f\|^2_{L^2(0,T;H)}+TC_1^2+\|\dot z\|^2_{W^{1,2}(0,T;H)},\\
&\left|\sum_{k=1}^i\tau_n (\delta v_n^k,\delta z_n^k+\delta^2 z_n^k)_H\right|
\leq \varepsilon\sum_{k=1}^i \tau_n\|\delta v_n^k\|^{2}_{H}+\frac{1}{2\varepsilon}\|\dot z\|_{W^{1,2}(0,T;H)}^2.
\end{align}
Moreover
\begin{align}
&\left|\sum_{k=1}^i\tau_n(G^{-1}_n(ev_n^k),e\delta z_n^k+e \delta^2 z_n^k)_{p,p'}\right|\nonumber\\
&\leq\frac{1}{p}\sum_{k=1}^i\tau_n\|G^{-1}_n(ev_n^k)\|_p^p+\frac{1}{p'}\sum_{k=1}^i\tau_n\|e\delta z_n^k+e\delta^2 z_n^k\|_{p'}^{p'}\nonumber\\
&\leq \frac{2^{p-1}c_3^p}{p}\sum_{k=1}^i\tau_n\|u_n^k+\delta u_n^k\|_V^{p'}+\frac{2^{p-1}c_3^pT}{p}\mathcal L^d(\Omega)+\frac{2^{p-1}}{p'}\|\dot z\|_{W^{1,p'}(0;T;V)}^{p'}\nonumber\\
&\leq \frac{4^{p-1}c_3^p}{p}(TC_1^p+C_1)+\frac{2^{p-1}c_3^pT}{p}\mathcal L^d(\Omega)+\frac{2^{p-1}}{p'}\|\dot z\|_{W^{1,p'}(0;T;V)}^{p'}.
\end{align}
Finally, by~\eqref{eq:G1n} and the convexity of $\phi_n^*$ we have
\begin{align}\label{eq:Gvnk}
\sum_{k=1}^i(G^{-1}_n(ev_n^k),e v_n^k-ev_n^{k-1})_{p,p'}&\ge \sum_{k=1}^i \left(\int_\Omega\phi_n^*(ev_n^k(x))\,\de x-\int_\Omega\phi_n^*(ev_n^{k-1}(x))\,\de x\right)\nonumber\\
&=\int_\Omega\phi_n^*(ev_n^i(x))\,\de x-\int_\Omega\phi_n^*(ev_n^0(x))\,\de x.
\end{align}
By combining~\eqref{eq:vnk_est}--\eqref{eq:Gvnk} with the bound~\eqref{eq:phins} for $\phi_n^*$, we deduce the existence of a positive constant $K^\varepsilon$, which depends on $\varepsilon$, but it is independent of $n$ and $i$, such that
\begin{equation}\label{eq:d2unk}
(1-2\varepsilon)\sum_{k=1}^i\tau_n\|\delta v_n^k\|^2_H+\int_\Omega\phi_n^*(ev_n^i(x))\,\de x\le K^\varepsilon\quad\text{for all $i\in\{1,\dots,n\}$}.
\end{equation}
By choosing $\varepsilon=\frac{1}{2}$ and using~\eqref{eq:phin0}, from~\eqref{eq:estM} and~\eqref{eq:d2unk} we deduce~\eqref{sec-der-bound}.
\end{proof}

We now want to pass to the limit as $n\to\infty$ in the discrete equation~\eqref{unkM} to obtain a weak solution $(u,\sigma)$ to the nonlinear viscoelastic system~\eqref{eq:nonlin-KV}--\eqref{eq:boundaryN}, according to Definition~\ref{def:weak_sol}. We start by defining the following interpolation sequences of $\{(u_n^k,\sigma_n^k)\}_{k=1}^n$:
\begin{align*}
&u_n(t):=u_n^k+(t-k\tau_n)\delta u_n^k, &&\tilde{u}_n(t):=\delta u_n^k+(t-k\tau_n)\delta^2 u_n^k, && t\in [(k-1)\tau_n,k\tau_n],\quad k\in\{1,\dots,n\},\\
&u^+_n(t):=u_n^k, &&\tilde{u}^+_n(t):=\delta u_n^k, && t\in ((k-1)\tau_n,k\tau_n], \quad k\in\{1,\dots,n\},\\
&u^+_n(0):=u_n^0=u^0, &&\tilde{u}^+_n(t):=\delta u_n^0=u^1, &&\\
&u^-_n(t):=u_n^{k-1}, &&\tilde{u}^-_n(t):=\delta u_n^{k-1},&& t\in [(k-1)\tau_n,k\tau_n), \quad k\in\{1,\dots,n\},\\
&u^-_n(T):=u_n^n, &&\tilde{u}^-_n(T):=\delta u_n^n,&& \\
& \sigma_n^+(t):=\sigma_n^k,&& && t\in ((k-1)\tau_n,k\tau_n], \quad k\in\{1,\dots,n\}.
\end{align*}
By means of this notation, we can state the following convergence lemma.

\begin{lemma}\label{lem:convKVF}
There exists a pair $(u,\sigma)\in (\V\cap W^{2,2}(0,T;H))\times L^p(0,T;L^p(\Omega;\R^{d\times d}_{sym}))$ such that, up to a not relabeled subsequence
\begin{alignat}{4}
&u_n\xrightharpoonup[n\to \infty]{W^{1,p'}(0, T;V)}u,&&\quad u_n\xstararrow{n\to \infty}{W^{1,\infty}(0, T;H)}u, &&\quad \tilde u_n\xrightharpoonup[n\to \infty]{L^{p'}(0, T;V)}\dot u, &&\quad \tilde u_n \xrightharpoonup[n\to \infty]{W^{1,2}(0, T;H)}\dot u, \label{weak-conv}\\
& u^\pm_n \xstararrow{n\to \infty}{L^\infty(0, T;V)}u,&&\quad \tilde{u}^\pm_n \xrightharpoonup[n\to \infty]{L^{p'}(0, T;V)}\dot{u} ,&&\quad \tilde{u}^\pm_n \xstararrow{n\to \infty}{L^\infty(0, T;H)}\dot{u},&&\quad \sigma_n^+\xrightharpoonup[n\to\infty]{L^p(0,T;L^p(\Omega;\R^{d\times d}_{sym}))}\sigma.\label{weak-conv2}
\end{alignat}
Moreover
\begin{align}
&u_n(t)\xrightharpoonup[n\to\infty]{V}u(t), & &\tilde u_n(t)\xrightharpoonup[n\to\infty]H\dot u(t)& &\text{for all $t\in[0,T]$,}\label{point-conv}\\
&u_n^\pm(t)\xrightharpoonup[n\to\infty]{V}u(t), & &\tilde u_n^\pm(t)\xrightharpoonup[n\to\infty]H\dot u(t)& &\text{for all $t\in[0,T]$.}\label{point-conv2}
\end{align}
\end{lemma}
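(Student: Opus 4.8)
The plan is to extract weak/weak-$*$ limits from the uniform bounds of Lemma~\ref{lem:estM} and the Corollary, then identify the various limits and reconcile the different interpolants. First I would observe that the piecewise-affine interpolant $u_n$ satisfies $\dot u_n(t)=\delta u_n^k=\tilde u_n^+(t)$ a.e., so $\|u_n\|_{W^{1,p'}(0,T;V)}$ is controlled by $\max_i\|u_n^i\|_V$ and $\sum_i\tau_n\|\delta u_n^i\|_V^{p'}$, both bounded by $C_1$; similarly $\|u_n\|_{W^{1,\infty}(0,T;H)}$ is bounded using $\max_i\|\delta u_n^i\|_H\le C_1$. Likewise $\tilde u_n$ is piecewise affine with $\dot{\tilde u}_n(t)=\delta^2 u_n^k$ a.e., so $\|\tilde u_n\|_{W^{1,2}(0,T;H)}^2\lesssim\max_i\|\delta u_n^i\|_H^2+\sum_k\tau_n\|\delta^2u_n^k\|_H^2\le C_1^2+C_2$, and $\|\tilde u_n\|_{L^{p'}(0,T;V)}$ is bounded since $\tilde u_n$ interpolates the $\delta u_n^k$ whose $L^{p'}(0,T;V)$-type sum is bounded. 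Finally $\sigma_n^+$ is bounded in $L^p(0,T;L^p(\Omega;\R^{d\times d}_{sym}))$ by the last term in~\eqref{eq:estM}. By reflexivity of $V$, $H$, $L^p$ (and sequential Banach-Alaoglu for the $L^\infty$-in-time and $L^p$ spaces, using separability of preduals), a diagonal argument gives a single not-relabeled subsequence along which all of~\eqref{weak-conv}--\eqref{weak-conv2} hold, where for the moment the limits of the four sequences $u_n,\tilde u_n,u_n^\pm,\tilde u_n^\pm,\sigma_n^+$ are a priori distinct objects $u,\tilde v,\dots,\sigma$.

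The key identification step is to show that all these limits coincide with $\dot u$ (resp.\ $u$). The comparison $u_n^+(t)-u_n(t)=(k\tau_n-t)\delta u_n^k$ on $((k-1)\tau_n,k\tau_n]$ gives $\|u_n^+-u_n\|_{L^{p'}(0,T;V)}^{p'}\le\tau_n^{p'}\sum_k\tau_n\|\delta u_n^k\|_V^{p'}\le\tau_n^{p'}C_1\to0$, and similarly for $u_n^-$, so $u_n^\pm$ and $u_n$ share the limit $u$; the same estimate with $\delta^2u_n^k$ in place of $\delta u_n^k$, now using $\sum_k\tau_n\|\delta^2u_n^k\|_H^2\le C_2$, shows $\|\tilde u_n^\pm-\tilde u_n\|_{L^2(0,T;H)}\to0$, so $\tilde u_n^\pm$ and $\tilde u_n$ share a common limit $w$ in the appropriate spaces. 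Then I would check $w=\dot u$: for any $\varphi\in C_c^\infty(0,T;H)$ one has $\int_0^T(\tilde u_n^+,\varphi)_H=\int_0^T(\dot u_n,\varphi)_H=-\int_0^T(u_n,\dot\varphi)_H\to-\int_0^T(u,\dot\varphi)_H$, while the left side tends to $\int_0^T(w,\varphi)_H$; hence $\dot u=w\in W^{1,2}(0,T;H)\cap L^{p'}(0,T;V)$, which simultaneously gives $u\in W^{2,2}(0,T;H)$ and, together with $u_n(t)\in V_t$, membership $u\in\mathcal V$ (the constraint $u(t)\in V_t$ follows from the pointwise weak convergence below plus the nestedness (E4) and weak closedness of $V_t$ in $V$).

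For the pointwise-in-time statements~\eqref{point-conv}--\eqref{point-conv2}, I would use the uniform bounds $\sup_t\|u_n(t)\|_V\le C_1$, $\sup_t\|\tilde u_n(t)\|_H\le C_1$ together with equicontinuity: $u_n$ is uniformly $W^{1,p'}$-in-time bounded hence equi-Hölder $\|u_n(t)-u_n(s)\|_H\lesssim|t-s|^{1-1/p'}$ (via $H$-valued, since $V\hookrightarrow H$), and $\tilde u_n$ is equi-Hölder in $(V_0^D)'$ using the $W^{1,2}(0,T;H)$ bound and $H\hookrightarrow(V_0^D)'$. An Ascoli-type / subsequence-and-limit-point argument then upgrades the $L^{p'}$ (resp.\ $L^\infty$)-in-time weak convergence to weak convergence in $V$ (resp.\ $H$) at every $t$; the bound on $u_n^\pm-u_n$ and $\tilde u_n^\pm-\tilde u_n$ in sup-norm over $t$ (same telescoping estimate, now pointwise) transfers this to the piecewise-constant interpolants. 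The main obstacle I anticipate is bookkeeping rather than depth: making sure a single subsequence works for all of~\eqref{weak-conv}--\eqref{point-conv2} simultaneously, and justifying the pointwise weak convergence for \emph{all} $t\in[0,T]$ (not a.e.\ $t$) — this requires the equicontinuity estimates in a weak topology together with Lemma~\ref{lem:wcM}-type arguments to pin down weak continuity of the limit, and care that the limiting $u(t)$ indeed lands in $V_t$ for every $t$, using $\Gamma_s\subset\Gamma_t$ so that $V_s\supset V_t$ and the weak limit of functions vanishing (in the $W^{1,p'}$-distributional-gradient sense) across $\Gamma_t$ still does so.
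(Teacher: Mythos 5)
Your proposal is correct and follows essentially the same route as the paper: extract weak/weak-$*$ limits from the uniform bounds of Lemma~\ref{lem:estM} and~\eqref{sec-der-bound}, identify the limits of the affine and piecewise-constant interpolants by estimating their differences in $L^{p'}$- or $L^2$-in-time norms (with a factor of $\tau_n$ emerging), and then pass to pointwise weak convergence. One place where the paper is more direct is the pointwise statement~\eqref{point-conv}: you route through equicontinuity and an Ascoli-type argument, whereas the paper simply observes that weak convergence $u_n\rightharpoonup u$ in $W^{1,p'}(0,T;V)$ already yields $u_n(t)\rightharpoonup u(t)$ in $V$ for every $t$, since the evaluation map $\mathrm{ev}_t\colon W^{1,p'}(0,T;V)\to V$ is bounded and linear (by the embedding $W^{1,p'}(0,T;V)\hookrightarrow C^0([0,T];V)$), hence weakly continuous; similarly for $\tilde u_n$ in $W^{1,2}(0,T;H)$. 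The transfer of pointwise convergence to the piecewise-constant interpolants via the uniform bound on $\|u_n^\pm(t)-u_n(t)\|_H$ then closes the argument exactly as you describe. Your note that $u(t)\in V_t$ must be checked via weak closedness of $V_t^D$ and nestedness (E4) is a legitimate concern; the paper defers this verification to Lemma~\ref{lem:boun-con}, so it is consistent with the paper's structure.
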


\begin{proof}
Thanks to Lemma~\ref{lem:estM} and the estimate~\eqref{sec-der-bound}, the sequences 
\begin{align*}
&\{u_n\}_n\subset W^{1,p'}(0, T;V)\cap W^{1,\infty}(0, T;H),
& &\{\tilde u_n\}_n\subset L^{p'}(0,T;V)\cap W^{1,2}(0,T;H),\\
& \{\sigma_n^+\}_n\subset L^p(0,T;L^p(\Omega;\R^{d\times d}_{sym})),
\end{align*}
are uniformly bounded with respect to $n\in\N$. Indeed, we have
\begin{align*}
&\|u_n\|^{p'}_{W^{1,p'}(0, T;V)}\leq T\max_{k\in\{0,\dots,n\}}\|u_n^k\|_V^{p'}+\sum_{k=1}^n\tau_n\|\delta u_n^k\|^{p'}_V,\\
&\|u_n\|_{W^{1,\infty}(0, T;H)}\leq \max_{k\in\{0,\dots,n\}}\|u_n^k\|_H+\max_{k\in\{1,\dots,n\}}\|\delta u_n^k\|_H,\\
&\|\tilde{u}_n\|_{L^{p'}(0,T;V)}^{p'}\le 2\sum_{k=1}^n\tau_n\|\delta u_n^k\|_V^{p'}+\|u^1\|_V^{p'},\\
&\|\tilde{u}_n\|_{W^{1,2}(0,T;H)}^2\leq T\max_{k\in\{0,\dots,n\}}\|\delta u_n^k\|_H^2+\sum_{k=1}^n\tau_n\|\delta^2 u_n^k\|^2_H,\\
&\|\sigma_n^+\|^p_{L^p(0,T;L^p(\Omega;\R^{d\times d}_{sym}))}= \sum_{k=1}^n\tau_n\|\sigma_n^k\|_p^p.
\end{align*} 
By Banach-Alaoglu theorem and Lemma~\ref{lem:wcM} there exist three functions $u\in W^{1,p'}(0, T;V)\cap W^{1,\infty}(0, T;H)$, $v\in L^{p'}(0,T;V)\cap W^{1,2}(0,T;H)$, and $\sigma\in L^p(0,T;L^p(\Omega;\R^{d\times d}_{sym}))$ such that, up to a not relabeled subsequence
\begin{align}
&u_n\xrightharpoonup[n\to \infty]{W^{1,p'}(0, T;V)}u,&&\quad u_n\xstararrow{n\to \infty}{W^{1,\infty}(0, T;H)}u, &&\quad \tilde u_n\xrightharpoonup[n\to \infty]{L^{p'}(0, T;V)}v, &&\quad \tilde u_n \xrightharpoonup[n\to \infty]{W^{1,2}(0, T;H)}v, \label{weak-conv-1}
\end{align}
and 
\begin{align*}
&\sigma_n^+ \xrightharpoonup[n\to\infty]{L^p(0,T;L^p(\Omega;\R^{d\times d}_{sym}))}\sigma.
\end{align*}
Thanks to~\eqref{sec-der-bound} we get
\begin{align*}
\|\dot u_n-\tilde u_n\|_{L^2(0,T;H)}^2\leq \tau_n^2\sum_{k=1}^n\tau_n\|\delta^2 u_n^k\|^2_H\leq C_2\tau_n^2\xrightarrow[n\to\infty]{}0,
\end{align*}
from which we deduce that $v=\dot u$. 

By~\eqref{eq:estM} also the sequences
\begin{align}\label{b3}
&\{u_n^\pm\}_n\subset L^\infty(0, T;V),
& &\{\tilde u_n^\pm\}_n\subset L^{p'}(0,T;V)\cap L^\infty(0,T;H),
\end{align}
are uniformly bounded. Moreover, by using again~\eqref{eq:estM} and~\eqref{sec-der-bound} we have 
\begin{align*}
&\|u_n-u^{+}_n\|_{L^\infty(0, T;H)} \leq \tau_n\max_{k\in\{1,\dots,n\}}\ \|\delta u_n^k\|_H\leq C_1\tau_n\xrightarrow[n\to\infty]{}0,\\
&\|u^+_n-u^{-}_n\|_{L^\infty(0, T;H)}\leq \tau_n \max_{k\in\{1,\dots,n\}}\ \|\delta u_n^k\|_H\leq C_1\tau_n\xrightarrow[n\to\infty]{}0,\\
&\|\tilde u_n-\tilde u_n^+\|_{L^2(0,T;H)}^2\leq \tau_n^2\sum_{k=1}^n\tau_n\|\delta^2 u_n^k\|^2_{H}\leq C_2 \tau_n^2\xrightarrow[n\to\infty]{}0,\\
&\|\tilde u^+_n-\tilde u_n^-\|_{L^2(0,T;H)}^2\leq \tau_n^2\sum_{k=1}^n\tau_n\|\delta^2 u_n^k\|^2_H\leq C_2 \tau_n^2\xrightarrow[n\to\infty]{}0.
\end{align*}
We combine~\eqref{weak-conv-1} and~\eqref{b3} with the previous convergences to derive
\begin{equation*}
u^\pm_n \xstararrow{n\to \infty}{L^\infty(0, T;V)} u,\qquad \tilde{u}^\pm_n \xstararrow{n\to \infty}{L^\infty(0, T;H)}\dot{u},\qquad \tilde{u}^\pm_n \xrightharpoonup[n\to \infty]{L^{p'}(0, T;V)}\dot{u}.
\end{equation*}

Finally, by~\eqref{weak-conv-1} for all $t\in[0,T]$ we have
\begin{equation*}
u_n(t)\xrightharpoonup[n\to\infty]{V}u(t),\qquad \tilde u_n(t)\xrightharpoonup[n\to\infty]{H}\dot u(t).
\end{equation*}
Thanks to~\eqref{eq:estM} and~\eqref{sec-der-bound}, for all $t\in[0,T]$ we get
\begin{alignat*}{2}
&\|u_n^\pm(t)\|_{V}\le C_1,&&\quad\|u_n^+(t)-u_n(t)\|_H\leq C_1\tau_n\xrightarrow[n\to\infty]{}0,\\
& &&\quad\|u_n^+(t)-u^-_n(t)\|_H\leq C_1\tau_n\xrightarrow[n\to\infty]{}0,\\
&\|\tilde u_n^\pm(t)\|_H\le C_1,&&\quad\|\tilde u_n^+(t)-\tilde u_n(t)\|_{H}^2\leq \tau_n\sum_{k=1}^n\tau_n\|\delta^2 u_n^k\|^2_H\leq C_2\tau_n\xrightarrow[n\to\infty]{}0,\\
& &&\quad\|\tilde u_n^+(t)-\tilde u^-_n(t)\|_H^2=\tau_n\sum_{k=1}^n\tau_n\|\delta^2 u_n^k\|_H^2\leq C_2\tau_n\xrightarrow[n\to\infty]{}0,
\end{alignat*}
which imply~\eqref{point-conv} and~\eqref{point-conv2}. 
\end{proof}

In view of the compactness of the embedding $V\hookrightarrow H$ (see Remark~\ref{rem:comp}), we deduce also the following strong convergences.

\begin{corollary}
Let  $(u,\sigma)\in (\V\cap W^{2,2}(0,T;H))\times L^p(0,T;L^p(\Omega;\R^{d\times d}_{sym}))$ be the pair of function given by Lemma~\ref{lem:convKVF}. Then, we have
\begin{equation}\label{eq:strong-conv}
u_n^+\xrightarrow[n\to\infty]{L^2(0,T;H)} u,\quad \tilde u_n^+\xrightarrow[n\to\infty]{L^2(0,T;H)} \dot u.
\end{equation}
\end{corollary}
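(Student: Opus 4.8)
The plan is to obtain~\eqref{eq:strong-conv} as a routine application of the Aubin--Lions--Simon compactness lemma, using the compact embedding $V\hookrightarrow H$ (Remark~\ref{rem:comp}) together with the uniform bounds of Lemma~\ref{lem:estM} and~\eqref{sec-der-bound}, and then transferring the resulting strong convergence from the piecewise affine interpolants $u_n,\tilde u_n$, which carry a controlled time derivative, to the piecewise constant ones $u_n^+,\tilde u_n^+$, which are uniformly close to them.

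For the first convergence I would start from the observation that, by Lemma~\ref{lem:estM}, $\{u_n\}_n$ is bounded in $L^{p'}(0,T;V)$, while $\dot u_n$ coincides with $\delta u_n^k$ on each interval $((k-1)\tau_n,k\tau_n]$ and is therefore bounded in $L^\infty(0,T;H)$, hence in $L^1(0,T;H)$. Since the embedding $V\hookrightarrow H$ is compact, the Aubin--Lions--Simon lemma gives relative compactness of $\{u_n\}_n$ in $L^{p'}(0,T;H)$. Because each $u_n(t)$ is a convex combination of $u_n^{k-1}$ and $u_n^k$, whose $V$-norms are bounded by $C_1$, the sequence $\{u_n\}_n$ is also bounded in $L^\infty(0,T;H)$, and interpolating the $L^{p'}$-convergence against this $L^\infty$ bound upgrades it to $L^2(0,T;H)$. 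The strong limit is forced to be $u$ by the weak convergence $u_n\rightharpoonup u$ of Lemma~\ref{lem:convKVF} and uniqueness of limits, and since every subsequence admits a further subsequence with the same behaviour, the whole sequence satisfies $u_n\to u$ in $L^2(0,T;H)$. Finally $\|u_n^+-u_n\|_{L^\infty(0,T;H)}\le C_1\tau_n\to 0$, as already noted in the proof of Lemma~\ref{lem:convKVF}, yields $u_n^+\to u$ in $L^2(0,T;H)$.

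The second convergence is entirely analogous with $\tilde u_n$ in place of $u_n$: $\{\tilde u_n\}_n$ is bounded in $L^{p'}(0,T;V)$ by Lemma~\ref{lem:estM}, and $\dot{\tilde u}_n=\delta^2 u_n^k$ on $((k-1)\tau_n,k\tau_n]$ gives $\|\dot{\tilde u}_n\|_{L^2(0,T;H)}^2=\sum_{k=1}^n\tau_n\|\delta^2 u_n^k\|_H^2\le C_2$ by~\eqref{sec-der-bound}; moreover $\|\tilde u_n(t)\|_H\le\|\delta u_n^k\|_H+\tau_n\|\delta^2 u_n^k\|_H\le C_1+(C_2T)^{1/2}$, so $\{\tilde u_n\}_n$ is bounded in $L^\infty(0,T;H)$. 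As before, Aubin--Lions--Simon and interpolation produce a subsequence converging strongly in $L^2(0,T;H)$ to the weak limit $\dot u$, hence $\tilde u_n\to\dot u$ in $L^2(0,T;H)$; since $\|\tilde u_n^+-\tilde u_n\|_{L^2(0,T;H)}\le C_2^{1/2}\tau_n\to 0$ (again from the proof of Lemma~\ref{lem:convKVF}), I conclude $\tilde u_n^+\to\dot u$ in $L^2(0,T;H)$.

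The only point that needs care is the integrability exponent: when $p>2$ one has $p'<2$, so Aubin--Lions--Simon only yields compactness in $L^{p'}(0,T;H)$ and not directly in $L^2(0,T;H)$; this is precisely why the uniform $L^\infty(0,T;H)$ bounds on $\{u_n\}_n$ and $\{\tilde u_n\}_n$ are invoked, to interpolate the convergence up to the exponent $2$. Everything else is bookkeeping, namely checking that the affine interpolants inherit the $L^\infty(0,T;H)$ bounds and that their distances from the constant interpolants are $O(\tau_n)$, both immediate from Lemma~\ref{lem:estM} and~\eqref{sec-der-bound}.
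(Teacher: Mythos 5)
Your proof is correct and follows the same strategy as the paper: apply Aubin--Lions--Simon compactness to the affine interpolants $u_n,\tilde u_n$ (using the compact embedding $V\hookrightarrow H$ together with the bounds of Lemma~\ref{lem:estM} and~\eqref{sec-der-bound}), then transfer to the piecewise constant interpolants via the $O(\tau_n)$ proximity estimates already recorded in the proof of Lemma~\ref{lem:convKVF}. You are in fact somewhat more careful than the paper's written proof: for $\tilde u_n$ the paper passes directly from the $L^{p'}(0,T;V)\cap W^{1,2}(0,T;H)$ bound to compactness in $L^2(0,T;H)$, which is immediate only when $p'\ge 2$, i.e.\ $p\le 2$; when $p>2$ one really does need the interpolation against the uniform $L^\infty(0,T;H)$ bound that you make explicit, so you have filled in a step the paper leaves implicit. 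For the first convergence a slightly shorter route is available, since $\{u_n\}_n$ is in fact bounded in $L^\infty(0,T;V)\cap W^{1,\infty}(0,T;H)$ (because $\max_k\|u_n^k\|_V\le C_1$ and $u_n(t)$ is a convex combination of $u_n^{k-1},u_n^k$), so Simon's $p=\infty$ criterion gives relative compactness directly in $C^0([0,T];H)$, hence in $L^2(0,T;H)$, without interpolation; but your $L^{p'}$-plus-interpolation version is equally valid.
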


\begin{proof}
By Lemma~\ref{lem:convKVF} we know that the following sequences
\begin{align*}
&\{u_n\}_n\subset W^{1,p'}(0, T;V)\cap W^{1,\infty}(0, T;H),
& &\{\tilde u_n\}_n\subset L^{p'}(0,T;V)\cap W^{1,2}(0,T;H),
\end{align*}
are uniformly bounded with respect to $n$. Since the embedding $V\hookrightarrow H$ is compact, by Aubin-Lions lemma (see for example~\cite[Theorem 3]{Simon}), we derive
\begin{equation*}
u_n\xrightarrow[n\to\infty]{L^2(0,T;H)} u,\quad \tilde u_n\xrightarrow[n\to\infty]{L^2(0,T;H)} \dot u.
\end{equation*}
Moreover, we have
\begin{align*}
&\|u_n-u^+_n\|_{L^2(0, T;H)}^2 \leq \tau_n^2\sum_{k=1}^n\tau_n\|\delta u_n^k\|^2_H\leq TC_1\tau_n^2\xrightarrow[n\to\infty]{}0,\\
&\|\tilde u_n-\tilde u_n^+\|_{L^2(0,T;H)}^2\leq \tau_n^2\sum_{k=1}^n\tau_n\|\delta^2 u_n^k\|^2_{H}\leq C_2 \tau_n^2\xrightarrow[n\to\infty]{}0,
\end{align*}
which imply~\eqref{eq:strong-conv}.
\end{proof}

We want to prove that the pair $(u,\sigma)\in (\V\cap W^{2,2}(0,T;H))\times L^p(0,T;L^p(\Omega;\R^{d\times d}_{sym}))$ of Lemma~\ref{lem:convKVF} is a weak solution to the nonlinear viscoelastic system~\eqref{eq:nonlin-KV}--\eqref{eq:boundaryN} with initial conditions~\eqref{eq:initial}. To this aim, we need to check $(i)$--$(iii)$ of Definition~\ref{def:weak_sol} and that $u(0)=u^0$ in $V$ and $\dot u(0)=u^1$ in $H$.
We start by showing that the function $u\in \mathcal V\cap W^{2,2}(0,T;H)$ satisfies the Dirichlet boundary conditions and the initial conditions.

\begin{lemma}\label{lem:boun-con}
The function $u\in\V\cap W^{2,2}(0,T;H)$ of Lemma~\ref{lem:convKVF} satisfies $(i)$ of Definition~\ref{def:weak_sol} and the initial conditions $u(0)=u^0$ in $V$ and $\dot u(0)=u^1$ in $H$. 
\end{lemma}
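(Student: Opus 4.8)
The statement to prove is that the limit function $u\in\mathcal V\cap W^{2,2}(0,T;H)$ from Lemma~\ref{lem:convKVF} satisfies condition $(i)$ of Definition~\ref{def:weak_sol}, namely $u(t)-z(t)\in V_t^D$ for all $t\in[0,T]$, together with the initial conditions $u(0)=u^0$ in $V$ and $\dot u(0)=u^1$ in $H$. The plan is to transfer each of these properties from the discrete interpolants, where they hold by construction, to the limit, using the weak/weak-$*$ convergences recorded in Lemma~\ref{lem:convKVF} and the pointwise-in-time weak convergences \eqref{point-conv}--\eqref{point-conv2}. The main point to be careful about is that membership in $V_t^D$ is a \emph{closed} constraint, so it passes to weak limits, but one must match the right interpolant to the right time node so that the crack-set inclusion $\Gamma_s\subset\Gamma_t$ is used in the correct direction.

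\medskip

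First I would treat $(i)$. Fix $t\in[0,T]$ and choose, for each $n$, the index $k=k(n,t)\in\{1,\dots,n\}$ with $t\in((k-1)\tau_n,k\tau_n]$, so that $u_n^+(t)=u_n^k$ and, by construction (recall $u_n^k-z_n^k\in V_n^k=V^D_{k\tau_n}$), we have $u_n^+(t)-z(k\tau_n)\in V^D_{k\tau_n}$. Since $k\tau_n\ge t$ and the crack sets are nondecreasing, $V^D_{k\tau_n}\hookrightarrow V^D_{t'}$ for any $t'\ge k\tau_n$; but we actually want the inclusion into $V_t^D$, which requires $t\le k\tau_n$ — this holds, so in fact $V^D_{k\tau_n}\subset V_t^D$ is false in general; rather one uses that a function vanishing on $\partial_D\Omega$ with distributional gradient controlled on $\Omega\setminus\Gamma_{k\tau_n}$ is \emph{a fortiori} an admissible competitor with gradient controlled on the larger set $\Omega\setminus\Gamma_t$ only if $\Gamma_t\subset\Gamma_{k\tau_n}$, i.e. $t\le k\tau_n$ — which is exactly our situation. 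Hence $u_n^+(t)-z(k\tau_n)\in V_t^D$. Now $z(k\tau_n)\to z(t)$ in $V_0\subset V$ by continuity of $z\in W^{2,p'}(0,T;V_0)$ and $k\tau_n\to t$, while $u_n^+(t)\rightharpoonup u(t)$ weakly in $V$ by \eqref{point-conv2}; therefore $u_n^+(t)-z(k\tau_n)\rightharpoonup u(t)-z(t)$ weakly in $V$. Since $V_t^D$ is a closed subspace of $V$ (being the kernel of the continuous trace operator restricted to $W^{1,p'}(\Omega\setminus\Gamma_t;\mathbb R^d)$), it is weakly closed, and we conclude $u(t)-z(t)\in V_t^D$.

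\medskip

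For the initial conditions I would argue similarly but at $t=0$. We have $u_n(0)=u_n^0=u^0$ for every $n$, and by \eqref{point-conv} $u_n(0)\rightharpoonup u(0)$ weakly in $V$; hence $u(0)=u^0$ in $V$. Likewise $\tilde u_n(0)=\delta u_n^0=u^1$ for every $n$, and $\tilde u_n(0)\rightharpoonup\dot u(0)$ weakly in $H$ by \eqref{point-conv}, so $\dot u(0)=u^1$ in $H$. Here one uses that $u\in W^{1,p'}(0,T;V)\hookrightarrow C^0([0,T];V)$ and $\dot u\in W^{1,2}(0,T;H)\hookrightarrow C^0([0,T];H)$, so the pointwise values $u(0)$ and $\dot u(0)$ are well defined, together with the fact — implicit in \eqref{point-conv} — that the interpolants are continuous and $u_n(0)$, $\tilde u_n(0)$ indeed converge to these traces.

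\medskip

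The main obstacle, modest as it is, is bookkeeping with the time nodes: one must be sure that the inclusion of Sobolev spaces over the excised domains is used in the direction consistent with $\Gamma_s\subset\Gamma_t$ for $s\le t$ (so that the \emph{larger} crack at the node $k\tau_n\ge t$ gives an admissible competitor at time $t$), and one must invoke the continuity $z\in C^0([0,T];V_0)$ to replace the nodal value $z(k\tau_n)$ by $z(t)$ before passing to the weak limit. Once these are set up, each conclusion follows from the elementary fact that closed subspaces are weakly closed. For the initial data the only subtlety is that the limit is taken in $V$ (for $u(0)$) and in $H$ (for $\dot u(0)$), which is exactly what \eqref{point-conv} provides.
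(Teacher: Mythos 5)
Your treatment of the initial conditions is correct and is exactly what the paper does. However, the argument for $(i)$ has a genuine error in the choice of interpolant, and the ``resolution'' you offer is in fact inverted.

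You fix $t\in((k-1)\tau_n,k\tau_n]$ and work with $u_n^+(t)=u_n^k$, so the node time is $k\tau_n\ge t$ and $u_n^+(t)-z_n^k\in V^D_{k\tau_n}$. You correctly observe that ``$V^D_{k\tau_n}\subset V_t^D$ is false in general'', but then claim that a function with distributional gradient in $L^{p'}(\Omega\setminus\Gamma_{k\tau_n})$ \emph{a fortiori} has gradient in $L^{p'}(\Omega\setminus\Gamma_t)$ because $\Gamma_t\subset\Gamma_{k\tau_n}$. This is backwards: $\Omega\setminus\Gamma_{k\tau_n}$ is the \emph{smaller} open set, so having gradient control there gives no control on the larger set $\Omega\setminus\Gamma_t$; concretely $v_n^k$ may jump across $\Gamma_{k\tau_n}\setminus\Gamma_t$, which is a portion of $\Omega\setminus\Gamma_t$, and then $v_n^k\notin V_t^D$. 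The correct monotonicity is $V_s^D\subset V_t^D$ for $s\le t$ (as the paper uses in \eqref{eq:app} with $s=0$), so you need a node time $\le t$, i.e.\ the left interpolant. The fix is precisely what the paper does: use $u_n^-(t)-z_n^-(t)=u_n^{k-1}-z_n^{k-1}=v_n^{k-1}\in V^D_{(k-1)\tau_n}\subset V_t^D$ since $(k-1)\tau_n\le t$, together with $u_n^-(t)\rightharpoonup u(t)$ in $V$ from \eqref{point-conv2} and $z_n^-(t)\to z(t)$ in $V$ from $z\in C^0([0,T];V_0)$; then weak closedness of the closed subspace $V_t^D$ finishes the argument. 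With this replacement your proof coincides with the paper's.
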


\begin{proof}
By~\eqref{point-conv} we have
\begin{equation*}
u^0=u_n(0)\xrightharpoonup[n\to\infty]{V}u(0),\quad u^1=\tilde u_n(0)\xrightharpoonup[n\to\infty]H\dot u(0).
\end{equation*}
Hence, $u(0)=u^0$ in $V$ and $\dot u(0)=u^1$ in $H$. Moreover, since $z\in C^0([0,T];V_0)$ and thanks to~\eqref{point-conv2}, we have for all $t\in[0,T]$ 
\begin{equation*}
u_n^-(t)-z_n^-(t)\in V_t^D,\quad u_n^-(t)-z_n^-(t)\xrightharpoonup[n\to\infty]{V}u(t)-z(t).
\end{equation*}
Thus, $u(t)-z(t)\in V_t^D$ for all $t\in[0,T]$, being $V_t^D$ a closed subspace of $V$.
\end{proof} 

With the next lemma we show that the pair $(u,\sigma)$ solves the weak formulation~\eqref{eq:wweak} of the elastodynamics system.

\begin{lemma}\label{lem:wweak}
The pair $(u,\sigma)\in (\V\cap W^{2,2}(0,T;H))\times L^{p}(0,T;L^p(\Omega;\R^{d\times d}_{sym}))$ of Lemma~\ref{lem:convKVF} satisfies $(ii)$ of Definition~\ref{def:weak_sol}.
\end{lemma}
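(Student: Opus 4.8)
The plan is to pass to the limit as $n\to\infty$ in the discrete equation~\eqref{unkM}, tested against the nodal values of an arbitrary $\varphi\in\D$. Fix such a $\varphi$ and set $\varphi_n^k:=\varphi(k\tau_n)$ for $k\in\{0,\dots,n\}$. Since $\varphi(t)\in V_t^D$ for every $t\in[0,T]$, the function $\varphi_n^k$ lies in $V_{k\tau_n}^D=V_n^k$ and hence is an admissible test function in~\eqref{unkM}; moreover, because $\varphi$ has compact support in $(0,T)$, for every $n$ we have $\varphi_n^n=\varphi(T)=0$ and $\varphi_n^1=\varphi(\tau_n)$ is of order $\tau_n$ in $V$ (as $\varphi(0)=0$ and $\varphi\in C^1$). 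First I would plug $\varphi=\tau_n\varphi_n^k$ into~\eqref{unkM}, use~\eqref{eq:sigmani}, and sum over $k\in\{1,\dots,n\}$ to obtain
\begin{equation*}
\sum_{k=1}^n\tau_n(\delta^2 u_n^k,\varphi_n^k)_H+\sum_{k=1}^n\tau_n(\sigma_n^k,e\varphi_n^k)_{p,p'}=\sum_{k=1}^n\tau_n(f_n^k,\varphi_n^k)_H,
\end{equation*}
and then rewrite the last two sums as $\int_0^T(\sigma_n^+(t),e\varphi_n^+(t))_{p,p'}\,\de t$ and $\int_0^T(f_n^+(t),\varphi_n^+(t))_H\,\de t$, where $\varphi_n^+$ and $f_n^+$ are the piecewise constant interpolants taking the values $\varphi_n^k$ and $f_n^k$ on $((k-1)\tau_n,k\tau_n]$.

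The acceleration term is the only delicate one and I would treat it by a summation by parts. Using $\tau_n\delta^2 u_n^k=\delta u_n^k-\delta u_n^{k-1}$,
\begin{equation*}
\sum_{k=1}^n\tau_n(\delta^2 u_n^k,\varphi_n^k)_H=(\delta u_n^n,\varphi_n^n)_H-(\delta u_n^0,\varphi_n^1)_H-\sum_{k=1}^{n-1}(\delta u_n^k,\varphi_n^{k+1}-\varphi_n^k)_H.
\end{equation*}
The first term vanishes ($\varphi_n^n=0$), the second is $O(\tau_n)$ since $\|\delta u_n^0\|_H=\|u^1\|_H$ is fixed, and for the third I would write $\varphi_n^{k+1}-\varphi_n^k=\int_{k\tau_n}^{(k+1)\tau_n}\dot\varphi(s)\,\de s$ and recognise, since $\tilde u_n^-\equiv\delta u_n^k$ on $[k\tau_n,(k+1)\tau_n)$, that $\sum_{k=1}^{n-1}(\delta u_n^k,\varphi_n^{k+1}-\varphi_n^k)_H=\int_{\tau_n}^T(\tilde u_n^-(s),\dot\varphi(s))_H\,\de s$. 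By the convergence $\tilde u_n^-\xstararrow{n\to\infty}{L^\infty(0,T;H)}\dot u$ from Lemma~\ref{lem:convKVF} (or by the strong $L^2$ convergence of $\tilde u_n^+$ together with $\|\tilde u_n^+-\tilde u_n^-\|_{L^2(0,T;H)}\to0$), paired against $\dot\varphi\in L^1(0,T;H)$, and since $\tau_n\to0$, this integral converges to $\int_0^T(\dot u,\dot\varphi)_H\,\de t$. Hence $\sum_{k=1}^n\tau_n(\delta^2 u_n^k,\varphi_n^k)_H\to-\int_0^T(\dot u(t),\dot\varphi(t))_H\,\de t$. (Equivalently, one may rewrite the sum as $\int_0^T(\dot{\tilde u}_n,\varphi_n^+)_H\,\de t$, replace $\varphi_n^+$ by the piecewise affine interpolant of $\varphi$ — the error being controlled by the bound~\eqref{sec-der-bound} — and integrate by parts in time using $\varphi(0)=\varphi(T)=0$.)

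For the stress and forcing terms I would use weak--strong convergence. Since $\varphi$ is uniformly continuous from $[0,T]$ to $V$, we have $\varphi_n^+\to\varphi$ uniformly in $V$, hence $e\varphi_n^+\to e\varphi$ strongly in $L^{p'}(0,T;L^{p'}(\Omega;\R^{d\times d}_{sym}))$ and $\varphi_n^+\to\varphi$ strongly in $L^2(0,T;H)$; combining with $\sigma_n^+\rightharpoonup\sigma$ weakly in $L^p(0,T;L^p(\Omega;\R^{d\times d}_{sym}))$ (from Lemma~\ref{lem:convKVF}) gives $\int_0^T(\sigma_n^+,e\varphi_n^+)_{p,p'}\,\de t\to\int_0^T(\sigma,e\varphi)_{p,p'}\,\de t$, and combining with $f_n^+\to f$ strongly in $L^2(0,T;H)$ (the piecewise averages of an $L^2$ map converge to it) gives $\int_0^T(f_n^+,\varphi_n^+)_H\,\de t\to\int_0^T(f,\varphi)_H\,\de t$. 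Passing to the limit in the summed identity then yields~\eqref{eq:wweak} for all $\varphi\in\D$, which is exactly $(ii)$ of Definition~\ref{def:weak_sol}.

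The main obstacle, and the only point that is not a routine weak--strong pairing, is the acceleration term: one has to match the discrete second difference with the correct one-sided interpolant (the left one $\tilde u_n^-$, whose indices are shifted by one relative to $\sigma_n^+$ and $f_n^+$) and check that the boundary contributions in the summation by parts disappear — which is precisely where the compact support of the test functions in $\D$ and the fixed size of $\|u^1\|_H$ come in.
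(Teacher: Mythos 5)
Your proposal is correct and follows essentially the same strategy as the paper: test the discrete equation~\eqref{unkM} with $\tau_n\varphi(k\tau_n)$, sum over $k$, perform a discrete summation by parts on the acceleration term using $\varphi(0)=\varphi(T)=0$, and pass to the limit by weak--strong pairing using the convergences of Lemma~\ref{lem:convKVF}. The only cosmetic difference is in the acceleration term: the paper introduces the piecewise-constant interpolant $\tilde\varphi_n^+$ of the discrete derivative $\delta\varphi_n^k$ and writes the sum as $-\int_0^T(\tilde u_n^-,\tilde\varphi_n^+)_H\,\de t$, using $\tilde\varphi_n^+\to\dot\varphi$ strongly in $L^2(0,T;H)$, whereas you use the fundamental theorem of calculus to pair $\tilde u_n^-$ directly against $\dot\varphi$, which leads to the same limit.
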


\begin{proof}
We fix $n\in\N$ and a function $\varphi\in \D$. We consider the following functions
\begin{alignat*}{3}
\varphi_n^k&:=\varphi(k\tau_n)\quad\text{for $k\in\{0,\dots,n\}$,}\quad\delta \varphi_n^k&:=\frac{\varphi_n^k-\varphi_n^{k-1}}{\tau_n}\quad \text{for }k\in\{1,\dots,n\},
\end{alignat*}
and the piecewise-constant approximating sequences
\begin{align*}
&\varphi^+_n(t):=\varphi_n^k, & & \tilde\varphi^+_n(t):=\delta\varphi_n^k, & & f^+_n(t):=f_n^k,& &\text{for } t\in ((k-1)\tau_n,k\tau_n],\quad k\in\{1,\dots,n\}.
\end{align*}
If we use $\tau_n\varphi_n^k\in V_n^k$ as a test function in~\eqref{unkM}, after summing over $k\in\{1,\dots,n\}$, we get
\begin{align}\label{limitM}
\sum_{k=1}^n\tau_n(\delta^2u_n^k,\varphi^k_n)_H+\sum_{k=1}^n\tau_n(\sigma_n^k,e\varphi^k_n)_{p,p'}=\sum_{k=1}^n\tau_n(f_n^k,\varphi^k_n)_H.
\end{align}
Since $\varphi_n^0=\varphi_n^n=0$ we obtain
\begin{align*}
\sum_{k=1}^n \tau_n(\delta^2 u^k_n,\varphi^k_n)_H
&=\sum_{k=1}^{n} (\delta u^k_n,\varphi^k_n)_H-\sum_{k=1}^n (\delta u^{k-1}_n,\varphi^k_n)_H=\sum_{k=0}^{n-1} (\delta u^k_n,\varphi^k_n)_H-\sum_{k=0}^{n-1} (\delta u^k_n,\varphi^{k+1}_n)_H\\
&=-\sum_{k=0}^{n-1} \tau_n (\delta u^k_n,\delta \varphi^{k+1}_n)_H=-\sum_{k=1}^n\tau_n(\delta u^{k-1}_n,\delta \varphi^k_n)_H=-\int_0^T(\tilde{u}^-_n(t),\tilde{\varphi}^+_n(t))_H\, \de t,
\end{align*}
and from~\eqref{limitM} we deduce
\begin{align}\label{eqappM}
-\int_0^T(\tilde{u}^-_n(t),\tilde{\varphi}^+_n(t))_H\,\de t+\int_0^T(\sigma_n^+(t),e \varphi^+_n(t))_{p,p'}\,\de t=\int_0^T( f^+_n(t),\varphi^+_n(t))_H\,\de t.
\end{align}
Thanks to~\eqref{weak-conv2} and the convergences 
\begin{equation*}
\varphi^+_n\xrightarrow[n\to\infty]{L^{p'}(0,T;V)}\varphi, \quad \varphi^+_n\xrightarrow[n\to\infty]{ L^2(0,T;H)}\varphi, \quad \tilde{\varphi}^+_n\xrightarrow[n\to\infty]{L^2(0,T;H)}\dot{\varphi},\quad f^+_n\xrightarrow[n\to\infty]{L^2(0,T;H)}f
\end{equation*}
we can pass to the limit in~\eqref{eqappM}, and we get that the pair $(u,\sigma)\in\V\times L^p(0,T;L^p(\Omega;\R^{d\times d}_{sym}))$ satisfies $(ii)$ of Definition~\ref{def:weak_sol}.
\end{proof}

Finally, we have that the pair $(u,\sigma)$ satisfies the constitutive law~\eqref{eq:conlaw}.

\begin{lemma}\label{lem:conlaw}
The pair $(u,\sigma)\in (\V\cap W^{2,2}(0,T;H))\times L^{p}(0,T;L^p(\Omega;\R^{d\times d}_{sym}))$ of Lemma~\ref{lem:convKVF} satisfies $(iii)$ of Definition~\ref{def:weak_sol}.
\end{lemma}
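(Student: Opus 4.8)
The plan is to run Minty's monotonicity trick in its time–space integrated form; the only genuine work is a \emph{limsup} inequality for the ``dissipation'' pairing, everything else being weak/strong convergences and bookkeeping.

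Two convergences will be used throughout. By the definition~\eqref{eq:sigmani} of $\sigma_n^k$ we have $G_n(\sigma_n^+(t))=eu_n^+(t)+e\tilde u_n^+(t)$ for a.e.\ $t\in[0,T]$; moreover $\sigma_n^+\rightharpoonup\sigma$ in $L^p(0,T;L^p(\Omega;\R^{d\times d}_{sym}))$ and $eu_n^++e\tilde u_n^+\rightharpoonup eu+e\dot u$ in $L^{p'}(0,T;L^{p'}(\Omega;\R^{d\times d}_{sym}))$, both being immediate consequences of Lemma~\ref{lem:convKVF} (for the second one, $u_n^+\to u$ weakly-$*$ in $L^\infty(0,T;V)$ and $\tilde u_n^+\rightharpoonup\dot u$ in $L^{p'}(0,T;V)$). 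The heart of the matter is to show
\begin{equation*}
\limsup_{n\to\infty}\int_0^T(\sigma_n^+(t),eu_n^+(t)+e\tilde u_n^+(t))_{p,p'}\,\de t\le\int_0^T(\sigma(t),eu(t)+e\dot u(t))_{p,p'}\,\de t.
\end{equation*}

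To prove this I test~\eqref{unkM} with $\varphi=\tau_n(u_n^k+\delta u_n^k-z_n^k-\delta z_n^k)\in V_n^k$ (admissible since $V_{(k-1)\tau_n}^D\subset V_{k\tau_n}^D$) and sum over $k\in\{1,\dots,n\}$, obtaining
\begin{equation*}
\int_0^T(\sigma_n^+,eu_n^++e\tilde u_n^+)_{p,p'}\,\de t=A_n+B_n-C_n,
\end{equation*}
where $A_n$ pairs $\sigma_n^+$ with the piecewise-constant interpolants of $ez+e\dot z$, $B_n$ pairs the interpolant of $f$ with that of $u+\dot u-z-\dot z$, and $C_n=\sum_{k=1}^n\tau_n(\delta^2u_n^k,u_n^k+\delta u_n^k-z_n^k-\delta z_n^k)_H$. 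Since $z\in W^{2,p'}(0,T;V_0)$ the $z$-interpolants converge strongly, so $A_n\to\int_0^T(\sigma,ez+e\dot z)_{p,p'}\,\de t$; by the strong $L^2(0,T;H)$ convergence of $u_n^+,\tilde u_n^+$ (the corollary to Lemma~\ref{lem:convKVF}) and $f_n^+\to f$, $B_n\to\int_0^T(f,u+\dot u-z-\dot z)_H\,\de t$. For $C_n$ I split it into the four natural pieces and perform discrete integration by parts: the three pieces pairing $\delta^2u_n^k$ with $u_n^k$, $z_n^k$, $\delta z_n^k$ converge exactly (using $u_n^n\to u(T)$ strongly in $H$, $\delta u_n^n\rightharpoonup\dot u(T)$ in $H$, $\tilde u_n^\pm\to\dot u$ strongly in $L^2(0,T;H)$, and $u\in W^{2,2}(0,T;H)$) to $\int_0^T(\ddot u,u)_H\,\de t$, $\int_0^T(\ddot u,z)_H\,\de t$, $\int_0^T(\ddot u,\dot z)_H\,\de t$, while the piece pairing $\delta^2u_n^k$ with $\delta u_n^k$ equals $\tfrac12\|\delta u_n^n\|_H^2-\tfrac12\|u^1\|_H^2+o(1)$, whence $\liminf_n C_n\ge\int_0^T(\ddot u,u+\dot u-z-\dot z)_H\,\de t$ by weak lower semicontinuity of $\|\cdot\|_H$. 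Hence the limsup above is bounded by $\int_0^T(\sigma,ez+e\dot z)_{p,p'}\,\de t+\int_0^T(f-\ddot u,u+\dot u-z-\dot z)_H\,\de t$. To identify this bound I integrate~\eqref{eq:wweak} by parts in time (licit as $u\in W^{2,2}(0,T;H)$), getting $\int_0^T(\ddot u,\varphi)_H\,\de t+\int_0^T(\sigma,e\varphi)_{p,p'}\,\de t=\int_0^T(f,\varphi)_H\,\de t$ for $\varphi\in\D$, and then extend this identity to $\varphi=u+\dot u-z-\dot z$ — which lies in $L^{p'}(0,T;V)\cap L^2(0,T;H)$ and satisfies $\varphi(t)\in V_t^D$ for a.e.\ $t$ by item $(i)$ of Definition~\ref{def:weak_sol} and the monotonicity of $t\mapsto\Gamma_t$ — via a density argument: one approximates $\varphi$ by elements of $\D$ through a small backward time-shift (which keeps the values in $V_t^D$, precisely because $\Gamma_s\subset\Gamma_t$ for $s\le t$), mollification in time, and cut-off near $0$ and $T$. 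This gives $\int_0^T(f-\ddot u,u+\dot u-z-\dot z)_H\,\de t=\int_0^T(\sigma,e(u+\dot u-z-\dot z))_{p,p'}\,\de t$, and the bound for the limsup collapses to $\int_0^T(\sigma,eu+e\dot u)_{p,p'}\,\de t$, as required.

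With the limsup estimate at hand, Minty's argument concludes. For an arbitrary $\eta\in L^p(0,T;L^p(\Omega;\R^{d\times d}_{sym}))$, integrating the monotonicity inequality~\eqref{eq:monoton} for $G_n$ and using $G_n(\sigma_n^+)=eu_n^++e\tilde u_n^+$ yields
\begin{multline*}
0\le\int_0^T(\sigma_n^+,eu_n^++e\tilde u_n^+)_{p,p'}\,\de t-\int_0^T(\eta,eu_n^++e\tilde u_n^+)_{p,p'}\,\de t\\
-\int_0^T(\sigma_n^+,G_n(\eta))_{p,p'}\,\de t+\int_0^T(\eta,G_n(\eta))_{p,p'}\,\de t.
\end{multline*}
Passing to the limit — the first term through the limsup estimate, the others through the two weak convergences together with $G_n(\eta)=G(\eta)+\tfrac1n|\eta|^{p-2}\eta\to G(\eta)$ strongly in $L^{p'}$ (valid by (G3) since $|\eta|^{p-2}\eta\in L^{p'}$) — gives
\begin{equation*}
0\le\int_0^T\!\!\int_\Omega(\sigma-\eta)\cdot(eu+e\dot u-G(\eta))\,\de x\,\de t\qquad\text{for every }\eta\in L^p(0,T;L^p(\Omega;\R^{d\times d}_{sym})).
\end{equation*}
Choosing $\eta=\sigma-\lambda\zeta$ with $\lambda>0$ and $\zeta\in L^p(0,T;L^p(\Omega;\R^{d\times d}_{sym}))$, dividing by $\lambda$, and letting $\lambda\to0^+$ (the limit enters the integral by dominated convergence, using the continuity of $G$ and, for $\lambda\le1$, the bound $|G(\sigma-\lambda\zeta)|\le b_3(1+2^{p-1}|\sigma|^{p-1}+2^{p-1}|\zeta|^{p-1})$ from (G3)), one obtains $\int_0^T\int_\Omega\zeta\cdot(eu+e\dot u-G(\sigma))\,\de x\,\de t\ge0$ for all such $\zeta$; replacing $\zeta$ by $-\zeta$ forces $G(\sigma)=eu+e\dot u$ a.e.\ in $(0,T)\times\Omega$, which, since $\mathcal L^d(\Gamma_t)=0$, is exactly~\eqref{eq:conlaw}.

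The routine ingredients are the weak/strong convergences and the discrete integration-by-parts bookkeeping. The genuinely delicate step is the density argument permitting $\varphi=u+\dot u-z-\dot z$ to be used in the weak formulation — the approximation of an $L^{p'}(0,T;V)$ function with values in the moving subspaces $V_t^D$ by functions in $\D$ — which is the standard bottleneck for time-dependent cracked domains and is resolved precisely because $t\mapsto\Gamma_t$ is nondecreasing. A minor subtlety is that only the \emph{limsup} inequality (not a full limit) is available for $C_n$, since $\|\delta u_n^n\|_H^2$ is merely weakly lower semicontinuous; this is all that Minty's trick needs.
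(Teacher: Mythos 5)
Your proof is correct and follows essentially the same strategy as the paper: test the discrete equation with $\tau_n(u_n^k+\delta u_n^k-z_n^k-\delta z_n^k)$, extend the weak formulation to $\varphi=u+\dot u-z-\dot z$ by backward time-shift/mollification/cut-off (resolved by the monotonicity of $t\mapsto\Gamma_t$), and run Minty. Two of your technical choices differ from the paper's and are worth flagging, although neither affects correctness. First, for the inertial term $C_n=\sum_k\tau_n(\delta^2u_n^k,\,u_n^k+\delta u_n^k-z_n^k-\delta z_n^k)_H$ you pay for a discrete integration-by-parts and then only retrieve a $\liminf$ from the weak lower semicontinuity of $\|\delta u_n^n\|_H^2$. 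This detour is unnecessary: $C_n=\int_0^T(\dot{\tilde u}_n,u_n^++\tilde u_n^+-z_n^+-\tilde z_n^+)_H\,\de t$, and since $\dot{\tilde u}_n\rightharpoonup\ddot u$ weakly in $L^2(0,T;H)$ while $u_n^++\tilde u_n^+-z_n^+-\tilde z_n^+\to u+\dot u-z-\dot z$ strongly in $L^2(0,T;H)$ (from~\eqref{eq:strong-conv}), the product passes to the limit exactly, giving a full limit rather than a limsup; this is what the paper does. Your limsup is of course still sufficient for Minty. Second, you run the monotonicity inequality with $G_n$ (exploiting $G_n(\sigma_n^+)=eu_n^++e\tilde u_n^+$ directly and then $G_n(\eta)\to G(\eta)$ strongly in $L^{p'}$), whereas the paper runs it with $G$ after first showing $\|G(\sigma_n^+)-G_n(\sigma_n^+)\|_{L^{p'}}\le \tfrac{1}{n}\|\sigma_n^+\|_{L^p}^{p-1}\to0$; your variant is a touch cleaner since it dispenses with that bridging estimate, at the mild price of verifying $|\eta|^{p-2}\eta\in L^{p'}$ for the test element $\eta\in L^p$. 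Everything else — the weak convergences invoked, the density argument, the final step $\eta=\sigma-\lambda\zeta$ — matches the paper's proof.
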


\begin{proof}
In order to verify the constitutive law, we use a modification of Minty method, as done in~\cite{BuPaSuSe,Patel}. Since $\dot u\in W^{1,2}(0,T;H)$, by integrating by parts in~\eqref{eq:wweak} we deduce that $(u,\sigma)$ solve
\begin{equation}\label{eq:cont}
\int_0^T(\ddot u(t),\varphi(t))_H\,\de t+\int_0^T(\sigma(t),e \varphi(t))_{p,p'}\,\de t=\int_0^T(f(t),\varphi (t))_H\,\de t\quad\text{for all $\varphi\in \D$}.
\end{equation}

Let us now consider a function $\varphi\in L^{p'}(0,T;V)\cap L^2(0,T;H)$ with $\varphi(t)\in V_t^D$ for a.e.\ $t\in[0,T]$. Then there exists a sequence of functions $\{\varphi_n\}_n\subset \mathcal D$ such that
\begin{equation*}
\varphi_n\xrightarrow[n\to\infty]{L^{p'}(0,T;V)}\varphi,\quad \varphi_n\xrightarrow[n\to\infty]{L^2(0,T;H)}\varphi.
\end{equation*}
This can be done, for example, by considering a sequence $\{\omega_n\}_n\subset C_c^1((\frac{2}{n},T-\frac{2}{n}))$ with $0\le \omega_n\le 1$ in $[0,T]$ for all $n\in\N$ and such that $\omega_n(t)\to 1$ as $n\to\infty$ for all $t\in(0,T)$, and a sequence $\{\rho_n\}_n \subset C_c^1((0,\frac{1}{n}))$ with $\rho_n\ge 0$ and $\int_{\R}\rho_n\,\de t=1$ for all $n\in\N$, and defining
$$\varphi_n:=\rho_n*(\omega_n \varphi)\quad\text{for all $n\in\N$}$$
(see also~\cite[Lemma 2.8]{DMT}). By testing~\eqref{eq:cont} with $\varphi_n$ and passing to the limit as $n\to\infty$ can deduce that the pair $(u,\sigma)$ satisfies
\begin{equation}\label{eq:strong_form}
\int_0^T(\ddot u(t),\varphi(t))_H\,\de t+\int_0^T(\sigma(t),e \varphi(t))_{p,p'}\,\de t=\int_0^T(f(t),\varphi (t))_H\,\de t
\end{equation}
for all $\varphi\in L^{p'}(0,T;V)\cap L^2(0,T;H)$ with $\varphi(t)\in V_t^D$ for a.e.\ $t\in[0,T]$. Notice that 
$$\dot u-\dot z\in L^{p'}(0,T;V)\cap L^2(0,T;H),\quad\varphi(t)\in V_t^D\quad\text{for a.e.\ $t\in[0,T]$},$$
since $\frac{u(t)-u(t-h)}{h}-\frac{z(t)-z(t-h)}{h}\in V_t^D$ for all $t\in(0,T]$ and $h\in (0,t)$, and 
$$\frac{u(t)-u(t-h)}{h}-\frac{z(t)-z(t-h)}{h}\to \dot u(t)-\dot z(t)\quad\text{for a.e.\ $t\in[0,T]$ as $h\to 0$}.$$
Hence, by using $\varphi:=u+\dot u-z-\dot z$ in~\eqref{eq:strong_form} we get
\begin{align}
\int_0^T(\sigma(t),e u(t)+e\dot u(t))_{p,p'}\,\de t&=\int_0^T(f(t),u (t)+\dot u(t))_H\,\de t-\int_0^T(\ddot u(t),u(t)+\dot u(t))_H\,\de t\nonumber\\
&\quad-\int_0^T(f(t),z(t)+\dot z(t))_H\,\de t+\int_0^T(\ddot u(t),z(t)+\dot z(t))_H\,\de t\nonumber\\
&\quad+\int_0^T(\sigma(t),e z(t)+e\dot z(t))_{p,p'}\,\de t\label{eq:weaksigma}.
\end{align}
We now consider equation~\eqref{unkM} and we use $\varphi=\tau_n(u_n^k+\delta u_n^k-z_n^k-\delta z_n^k)$ as test function. By summing over $k\in\{1,\dots,n\}$ we get
\begin{align*}
\sum_{k=1}^n\tau_n(G^{-1}_n(eu_n^k+e\delta u_n^k),eu_n^k+e\delta u_n^k)_{p,p'}&=\sum_{k=1}^n\tau_n(f_n^k,u_n^k+\delta u_n^k)_H-\sum_{k=1}^n\tau_n(\delta^2u_n^k,u_n^k+\delta u_n^k)_H\\
&\quad-\sum_{k=1}^n\tau_n(f_n^k,z_n^k+\delta z_n^k)_H+\sum_{k=1}^n\tau_n(\delta^2u_n^k,z_n^k+\delta z_n^k)_H\\
&\quad+\sum_{k=1}^n\tau_n(G^{-1}_n(eu_n^k+e\delta u_n^k),ez_n^k+e\delta z_n^k)_{p,p'}.
\end{align*}
By using the notations introduced before, we can rewrite the previous identity as
\begin{align}\label{interp-eq}
\int_0^T(\sigma_n^+(t),G_n(\sigma_n^+(t)))_{p,p'}\,\de t&=\int_0^T(G^{-1}_n(eu_n^+(t)+e\tilde u_n^+(t)),eu_n^+(t)+e\tilde u_n^+(t))_{p,p'}\,\de t\nonumber\\
&=\int_0^T(f_n^+(t),u_n^+(t)+\tilde{u}_n^+(t))_H\,\de t-\int_0^T(\dot{\tilde u}_n(t),u_n^+(t)+\tilde u_n^+(t))_H\,\de t\nonumber\\
&\quad -\int_0^T(f_n^+(t),z_n^+(t)+\tilde z_n^+(t))_H\,\de t+\int_0^T(\dot{\tilde u}_n(t),z_n^+(t)+\tilde z_n^+(t))_H\,\de t\nonumber\\
&\quad +\int_0^T(\sigma_n^+(t),ez_n^+(t)+e\tilde z_n^+(t))_{p,p'}\,\de t.
\end{align}
Now we pass to the limit in~\eqref{interp-eq} as $n\to\infty$. Thanks to the strong convergences
\begin{equation*}
 f_n^+\xrightarrow[n\to\infty]{L^2(0,T;H)}f,\quad z_n^+\xrightarrow[n\to\infty]{L^{p'}(0,T;V)}z,\quad z_n^+\xrightarrow[n\to\infty]{L^2(0,T;H)}z,\quad \tilde z_n^+\xrightarrow[n\to\infty]{L^{p'}(0,T;V)}\dot z,\quad \tilde z_n^+\xrightarrow[n\to\infty]{L^2(0,T;H)}\dot z
\end{equation*}
and the convergences in~\eqref{weak-conv},~\eqref{weak-conv2}, and~\eqref{eq:strong-conv} we deduce that there exists
\begin{align*}
&\lim_{n\to \infty}\int_0^T(\sigma_n^+(t),G_n(\sigma_n^+(t)))_{p,p'}\,\de t\nonumber\\
&=\int_0^T(f(t),u(t)+\dot u(t))_H\,\de t-\int_0^T(\ddot u(t),u(t)+\dot u(t))_H\,\de t\nonumber\\
&\quad -\int_0^T(f(t),z(t)+\dot z(t))_H\,\de t+\int_0^T(\ddot u(t),z(t)+\dot z(t))_H\,\de t+\int_0^T(\sigma(t), ez(t)+e\dot z(t))_{p,p'}\,\de t\\
&=\int_0^T(\sigma(t),e u(t)+e\dot u(t))_{p,p'}\,\de t,
\end{align*}
in view of~\eqref{eq:weaksigma}. Notice that by~\eqref{eq:estM}
\begin{align}
\|G(\sigma_n^+)-eu_n^+-e\tilde u_n^+\|_{L^{p'}(0,T;L^{p'}(\Omega;\R^{d\times d}_{sym}))}^{p'}&=\|G(\sigma_n^+)-G_n(\sigma_n^+)\|_{L^{p'}(0,T;L^{p'}(\Omega;\R^{d\times d}_{sym}))}^{p'}\nonumber\\
&=\frac{1}{n^{p'}}\|\sigma_n^+\|_{L^{p}(0,T;L^{p}(\Omega;\R^{d\times d}_{sym}))}^{p-1}\le \frac{C_1^{p'}}{n^{p'}}\xrightarrow[n\to\infty]{}0\label{ggn},
\end{align}
which gives
\begin{align*}
\lim_{n\to \infty}\int_0^T(\sigma_n^+(t),G(\sigma_n^+(t)))_{p,p'}\de t=\lim_{n\to \infty}\int_0^T(\sigma_n^+(t),G_n(\sigma_n^+(t)))_{p,p'}\,\de t=\int_0^T(\sigma(t),e u(t)+e\dot u(t))_{p,p'}\,\de t.
\end{align*}
Moreover, thanks to (G3) and~\eqref{eq:estM} the sequence $\{G(\sigma_n^+)\}_n\subset L^{p'}(0,T;L^{p'}(\Omega;\R^{d\times d}_{sym}))$ is uniformly bounded. Hence, by~\eqref{weak-conv} and~\eqref{ggn} we derive
$$G(\sigma_n^+)\xrightharpoonup[n\to\infty]{L^{p'}(0,T;L^{p'}(\Omega;\R^{d\times d}_{sym}))}eu+e\dot u,$$
We combine these two facts and we obtain that for all $w\in L^p(0,T;L^p(\Omega;\R^{d\times d}_{sym}))$
\begin{align*}
&0\le \lim_{n\to\infty}\int_0^T(\sigma_n^+(t)-w(t),G(\sigma_n^+(t))-G(w(t)))_{p,p'}\,\de t= \int_0^T(\sigma(t)-w(t),eu(t)+e\dot u(t)-G(w(t)))_{p,p'}\,\de t.
\end{align*}
In particular, we take $w:=\sigma-k b$ with $b\in L^p(0,T;L^p(\Omega;\R^{d\times d}_{sym}))$ and $k>0$, and by dividing by $k$ we get
\begin{align*}
0\le \int_0^T(b(t),eu(t)+e\dot u(t)-G(\sigma(t)-kb(t)))_{p,p'}\,\de t.
\end{align*}
Since $G$ is continuous, by sending $k\to 0^+$ we deduce
\begin{align*}
0\le \int_0^T(b(t),eu(t)+e\dot u(t)-G(\sigma(t)))_{p,p'}\,\de t
\end{align*}
for all $b\in L^p(0,T;L^p(\Omega;\R^{d\times d}_{sym}))$. This implies the constitutive law~\eqref{eq:conlaw}.
\end{proof}

We can finally prove our main existence result Theorem~\ref{thm:main}.

\begin{proof}[Proof of Theorem~\ref{thm:main}]
It is enough to combine Lemma~\ref{lem:convKVF} with Lemmas~\ref{lem:boun-con}--\ref{lem:conlaw}.
\end{proof}

We conclude this section with a uniqueness result in the space $(\mathcal V\cap W^{2,2}(0,T;H))\times L^p(0,T;L^p(\Omega;\R^{d\times d}_{sym}))$ for the weak solutions $(u,\sigma)$ to the system~\eqref{eq:nonlin-KV}--\eqref{eq:boundaryN} which satisfy the initial conditions~\eqref{eq:initial}.

\begin{theorem}\label{thm:uniq}
Let $(u,\sigma)\in(\mathcal V\cap W^{2,2}(0,T;H))\times L^p(0,T;L^p(\Omega;\R^{d\times d}_{sym}))$ be a weak solution to the nonlinear viscoelastic system~\eqref{eq:nonlin-KV}--\eqref{eq:boundaryN} satisfying the initial conditions~\eqref{eq:initial}. Then, the function $u$ is unique. Moreover, if $G$ is strictly monotone, also $\sigma$ is unique.
\end{theorem}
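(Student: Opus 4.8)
The plan is to run the classical energy estimate for the difference of two solutions, using only the monotonicity of $G$ together with the extra regularity $u\in W^{2,2}(0,T;H)$. Let $(u_1,\sigma_1)$ and $(u_2,\sigma_2)$ be two weak solutions in the stated class satisfying the initial conditions~\eqref{eq:initial}, and put $u:=u_1-u_2$, $\sigma:=\sigma_1-\sigma_2$. First I would record the properties of the difference: $u(0)=0$ in $V$ and $\dot u(0)=0$ in $H$ by~\eqref{eq:initial}; $u(t)=(u_1(t)-z(t))-(u_2(t)-z(t))\in V_t^D$ for all $t\in[0,T]$ by property~$(i)$ of Definition~\ref{def:weak_sol}; and, exactly as in the proof of Lemma~\ref{lem:conlaw}, $\dot u(t)\in V_t^D$ for a.e.~$t$, since the difference quotients $\frac1h(u(t)-u(t-h))$ belong to $V_{t-h}^D\subset V_t^D$ (because $\Gamma_{t-h}\subset\Gamma_t$) and converge to $\dot u(t)$ in $V$ for a.e.~$t$. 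Moreover, arguing as in the proof of Lemma~\ref{lem:conlaw} (integration by parts in time in~\eqref{eq:wweak}, licit since $u_i\in W^{2,2}(0,T;H)$, followed by the density procedure used there), each pair $(u_i,\sigma_i)$ satisfies~\eqref{eq:strong_form}; subtracting the two identities, the forcing term cancels and
\[
\int_0^T(\ddot u(t),\varphi(t))_H\,\de t+\int_0^T(\sigma(t),e\varphi(t))_{p,p'}\,\de t=0
\]
for every $\varphi\in L^{p'}(0,T;V)\cap L^2(0,T;H)$ with $\varphi(t)\in V_t^D$ for a.e.~$t\in[0,T]$.

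Then I would fix $s\in[0,T]$ and test this identity with $\varphi=\dot u\,\chi_{[0,s]}$ and with $\varphi=u\,\chi_{[0,s]}$; both are admissible by the previous step. Using $u\in C^1([0,T];H)$ and the vanishing initial data, integration by parts in time gives, respectively,
\[
\tfrac12\|\dot u(s)\|_H^2+\int_0^s(\sigma(t),e\dot u(t))_{p,p'}\,\de t=0,
\]
\[
(\dot u(s),u(s))_H-\int_0^s\|\dot u(t)\|_H^2\,\de t+\int_0^s(\sigma(t),eu(t))_{p,p'}\,\de t=0.
\]
The crucial point is then that, by the constitutive law~\eqref{eq:conlaw} for both solutions and the monotonicity~\eqref{eq:monoton} of $G$,
\[
(\sigma(t),eu(t)+e\dot u(t))_{p,p'}=(\sigma_1(t)-\sigma_2(t),G(\sigma_1(t))-G(\sigma_2(t)))_{p,p'}\ge0\qquad\text{for a.e.~}t.
\]
Adding the two displayed identities and discarding the term $\int_0^s(\sigma(t),eu(t)+e\dot u(t))_{p,p'}\,\de t\ge0$ leaves
\[
\tfrac12\|\dot u(s)\|_H^2+(\dot u(s),u(s))_H\le\int_0^s\|\dot u(t)\|_H^2\,\de t .
\]

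Finally, I would absorb the cross term: using $(\dot u(s),u(s))_H\ge-\tfrac14\|\dot u(s)\|_H^2-\|u(s)\|_H^2$ together with $\|u(s)\|_H^2\le s\int_0^s\|\dot u(t)\|_H^2\,\de t\le T\int_0^s\|\dot u(t)\|_H^2\,\de t$ (recall $u(s)=\int_0^s\dot u(t)\,\de t$), one obtains
\[
\|\dot u(s)\|_H^2\le 4(1+T)\int_0^s\|\dot u(t)\|_H^2\,\de t\qquad\text{for all }s\in[0,T].
\]
Since $s\mapsto\|\dot u(s)\|_H^2$ is continuous, Gronwall's lemma forces $\dot u\equiv0$, hence $u(t)=u(0)=0$ in $H$ for every $t$, and since $V\hookrightarrow H$ is injective, $u\equiv0$, that is $u_1=u_2$. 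For the last assertion, if $G$ is strictly monotone it is injective, so from $u_1=u_2$ and~\eqref{eq:conlaw} we get $G(\sigma_1(t,x))=eu_1(t,x)+e\dot u_1(t,x)=eu_2(t,x)+e\dot u_2(t,x)=G(\sigma_2(t,x))$ a.e., whence $\sigma_1=\sigma_2$.

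I expect the only genuinely delicate point to be the justification that $\dot u\,\chi_{[0,s]}$ and $u\,\chi_{[0,s]}$ are admissible test functions: they are neither $C^1$ in time nor compactly supported in $(0,T)$, so one must rely on the extension of~\eqref{eq:wweak} to test functions in $L^{p'}(0,T;V)\cap L^2(0,T;H)$ taking values in $V_t^D$ a.e.\ — precisely the density step in the proof of Lemma~\ref{lem:conlaw} (see also~\cite[Lemma~2.8]{DMT}) — and on the a.e.\ membership $\dot u(t)\in V_t^D$, which in turn uses the monotonicity $\Gamma_s\subset\Gamma_t$ of the prescribed crack. The remaining computations are routine.
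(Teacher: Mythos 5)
Your proposal is correct and follows essentially the same strategy as the paper: derive the extended weak identity, test with the difference so that monotonicity of $G$ kills the stress term, then run a Gronwall estimate on $\|\dot u\|_H^2$. The only (cosmetic) difference is that you test separately with $u\chi_{[0,s]}$ and $\dot u\chi_{[0,s]}$ and add, whereas the paper tests once with the single function $u+\dot u$; also note a small slip in your write-up: the difference quotient $\tfrac1h(u(t)-u(t-h))$ lies in $V_t^D$, not $V_{t-h}^D$ (since $u(t)\in V_t^D$ and $u(t-h)\in V_{t-h}^D\subset V_t^D$), but the conclusion $\dot u(t)\in V_t^D$ a.e.\ is unaffected.
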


\begin{proof}
 Let $(u_1,\sigma_1),(u_2,\sigma_2)\in(\mathcal V\cap W^{2,2}(0,T;H))\times L^p(0,T;L^p(\Omega;\R^{d\times d}_{sym}))$ be two weak solutions to the nonlinear viscoelastic system~\eqref{eq:nonlin-KV}--\eqref{eq:boundaryN} satisfying the initial conditions~\eqref{eq:initial}. 
 
We fix $s\in(0,T]$. If we set $u:=u_1-u_2\in \mathcal V\cap W^{2,2}(0,T;H)$, by arguing as in~\eqref{eq:strong_form}, we derive that $u$ satisfies the following identity
\begin{equation}\label{eq0}
 \int_0^s(\ddot u(t), \varphi(t))_H\,\de t+\int_0^s(\sigma_1(t)-\sigma_2(t),e\varphi(t))_{p,p'}\,\de t=0 
\end{equation}
for all $\varphi\in L^{p'}(0,s;V)\cap L^2(0,s;H)$ with $\varphi(t)\in V_t^D$ for a.e.\ $t\in[0,s]$. Moreover, we have
\begin{equation}\label{cond1}
 u(0)=\dot u(0)=0,\qquad u(t)+\dot u(t)\in V^D_t\quad\text{for a.e.\ $t\in[0,T]$}. 
\end{equation}
Thanks to~\eqref{cond1} we can use $u+\dot u$ as test function in~\eqref{eq0}, and we get
 \begin{equation}\label{eq1}
 \int_0^s(\ddot u(t),u(t)+\dot u(t))_H\,\de t=-\int_0^s(\sigma_1(t)-\sigma_2(t),eu(t)+e\dot u(t))_{p,p'}\,\de t.
\end{equation}
By taking into account~\eqref{eq:monoton} and~\eqref{eq:conlaw}, by~\eqref{cond1} we have
\begin{equation}
 \int_0^s(\sigma_1(t)-\sigma_2(t),eu(t)+e\dot u(t))_{p,p'}\,\de t=\int_0^s(\sigma_1(t)-\sigma_2(t),G(\sigma_1(t))-G(\sigma_2(t)))_{p,p'}\,\de t\geq 0.
\end{equation}
Moroever, since $u\in W^{2,2}(0,T;H)$, we derive
\begin{align}
\int_0^s(\ddot u(t),u(t)+\dot u(t))_H\,\de t=\frac{1}{2}\|\dot u(s)\|_H^2+(\dot u(s),u(s))_H-\int_0^s\|\dot u(t)\|_H^2\,\de t,
\end{align}
and by Young inequality 
\begin{align}\label{eq:u1-young}
|(\dot u(s),u(s))_H|\le \frac{1}{4}\|\dot u(s)\|_H^2+\|u(s)\|_H^2\le \frac{1}{4}\|\dot u(s)\|_H^2+T\int_0^s\|\dot u(t)\|_H^2\,\de t.
\end{align}
Hence, by~\eqref{eq1}--\eqref{eq:u1-young}, for every $s\in(0,T]$ we obtain
 \begin{equation}\label{eq2}
 \frac{1}{4}\|\dot u(s)\|_H^2-(T+1)\int_0^s\|\dot u(t)\|_H^2\,\de t\le \frac{1}{2}\|\dot u(s)\|_H^2+(\dot u(s),u(s))_H-\int_0^s\|\dot u(t)\|_H^2\,\de t \le 0.
\end{equation}
In particular, since
\begin{equation*}
 \frac{\de }{\de s}\left(\e^{-4(T+1)s}\int_0^s\|\dot u(t)\|_H^2\,\de t\right)=\e^{-4(T+1)s}\left(\|\dot u(s)\|_H^2-4(T+1)\int_0^s\|\dot u(t)\|_H^2\,\de t\right)\quad\text{for a.e.\ $s\in[0,T]$},
\end{equation*}
thanks to~\eqref{eq2} we have that the function $s\mapsto \e^{-4(T+1)s}\int_0^s\|\dot u(t)\|_H^2\,\de t $ is decreasing on $[0,T]$, from which we deduce
\begin{equation*}
 \int_0^s\|\dot u(t)\|_H^2\,\de t=0\quad \text{for all $s\in [0,T]$}.
\end{equation*}
Therefore $\dot u\equiv 0$ on $[0,T]$, which implies $u\equiv c$ for some constant $c\in H$. By~\eqref{cond1} we have $0=u(0)=c$, that is $u_1=u_2$. 

Finally, if $G$ is strictly monotone, by $G(\sigma_1)-G(\sigma_2)=eu+e\dot u=0$, we conclude that $\sigma_1=\sigma_2$.
\end{proof}


\section{Energy-dissipation balance and the viscoelastic paradox}\label{sec:enbalandvp}

In Theorem~\ref{thm:main} we proved the existence of a solution $(u,\sigma)$ to the nonlinear viscoelatic system~\eqref{eq:nonlin-KV}--\eqref{eq:boundaryN}. As observed in Lemma~\ref{lem:conlaw}, the displacement $u$ obtained via the discretisation-in-time scheme is more regular in time, more precisely $u\in W^{2,2}(0,T;H)$. This regularity allows us to prove the following energy-dissipation balance. 

\begin{theorem}\label{thm:enbal}
Every weak solution $(u,\sigma)\in (\V\cap W^{2,2}(0,T;H))\times L^{p}(0,T;L^p(\Omega;\R^{d\times d}_{sym}))$ to the nonlinear viscoelastic system~\eqref{eq:nonlin-KV}--\eqref{eq:boundaryN} satisfies the energy-dissipation balance
\begin{equation}\label{eq:enbal}
\frac{1}{2}\|\dot u(s)\|_H^2+\int_0^s(\sigma(t),e\dot u(t))_{p,p'}\,\de t= \frac{1}{2}\|\dot u(0)\|_2^2+\mathcal W(0,s;u,\sigma)\quad\text{for all $s\in[0,T]$},
\end{equation}
where $\mathcal W(0,s;u,\sigma)$ is the total work of $(u,\sigma)$ on the time interval $[0,s]\subseteq[0,T]$, defined as
\begin{align*}
\mathcal W(0,s;u,\sigma):=&\int_0^s(f(t), \dot u(t)-\dot z(t))_H\,\de t+\int_0^s(\ddot u(t), \dot z(t))_H\,\de t+\int_0^s(\sigma(t),e \dot z(t))_{p,p'}\,\de t\quad\text{for all $s\in[0,T]$}.
\end{align*}
\end{theorem}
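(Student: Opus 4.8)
The plan is to obtain \eqref{eq:enbal} by testing the equation of motion with the (time-truncated) velocity field $\dot u-\dot z$. The starting point is that every weak solution with the regularity assumed here already satisfies the identity \eqref{eq:strong_form}, i.e.
\begin{equation*}
\int_0^T(\ddot u(t),\varphi(t))_H\,\de t+\int_0^T(\sigma(t),e \varphi(t))_{p,p'}\,\de t=\int_0^T(f(t),\varphi (t))_H\,\de t
\end{equation*}
for every $\varphi\in L^{p'}(0,T;V)\cap L^2(0,T;H)$ with $\varphi(t)\in V_t^D$ for a.e.\ $t\in[0,T]$: the derivation carried out in the proof of Lemma~\ref{lem:conlaw} uses only $\dot u\in W^{1,2}(0,T;H)$, which is guaranteed by $u\in W^{2,2}(0,T;H)$. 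Moreover, the difference-quotient argument in that same proof shows $\dot u-\dot z\in L^{p'}(0,T;V)\cap L^2(0,T;H)$ with $(\dot u-\dot z)(t)\in V_t^D$ for a.e.\ $t\in[0,T]$. Hence, for $s\in[0,T)$ and $h\in(0,T-s)$, the function $\varphi:=\chi_{s,h}(\dot u-\dot z)$ is an admissible test function, where $\chi_{s,h}\in W^{1,\infty}(0,T)$ equals $1$ on $[0,s]$, equals $0$ on $[s+h,T]$, and is affine in between (multiplication by a scalar function of time preserves membership in $V_t^D$, which is a linear subspace of $V$).

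The second step is to let $h\to 0^+$ in \eqref{eq:strong_form} tested with this $\varphi$. Since $u\in W^{2,2}(0,T;H)$, the map $g(t):=\tfrac12\|\dot u(t)\|_H^2$ belongs to $W^{1,1}(0,T)\cap C^0([0,T])$ with $g'(t)=(\ddot u(t),\dot u(t))_H$; integrating by parts and using that $\dot\chi_{s,h}$ equals $-1/h$ on $(s,s+h)$ and vanishes elsewhere,
\begin{equation*}
\int_0^T\chi_{s,h}(t)\,(\ddot u(t),\dot u(t))_H\,\de t=\frac1h\int_s^{s+h}g(t)\,\de t-g(0)\xrightarrow[h\to 0^+]{}\tfrac12\|\dot u(s)\|_H^2-\tfrac12\|\dot u(0)\|_H^2
\end{equation*}
by continuity of $g$. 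All the remaining integrands — $(\ddot u,\dot z)_H$, $(\sigma,e\dot u)_{p,p'}$, $(\sigma,e\dot z)_{p,p'}$ and $(f,\dot u-\dot z)_H$ — belong to $L^1(0,T)$ (those involving $\sigma$ by Hölder in time, since $\sigma\in L^p(0,T;L^p(\Omega;\R^{d\times d}_{sym}))$ while $e\dot u,e\dot z\in L^{p'}(0,T;L^{p'}(\Omega;\R^{d\times d}_{sym}))$; those involving $\ddot u,\dot u,\dot z$ using $\ddot u,f\in L^2(0,T;H)$ together with $\dot u,\dot z\in L^\infty(0,T;H)$), so dominated convergence applies because $\chi_{s,h}\to\mathbbm{1}_{[0,s]}$ pointwise and boundedly. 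Passing to the limit and rearranging the resulting identity produces exactly \eqref{eq:enbal} for every $s\in[0,T)$; the remaining case $s=T$ follows by letting $s\to T^-$, since both sides of \eqref{eq:enbal} are continuous functions of $s$ (recall $\dot u\in C^0([0,T];H)$ and each time integral has an $L^1$ integrand).

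I do not anticipate a serious obstacle: this is the classical energy identity, and the only delicate point is that \eqref{eq:enbal} is a genuine equality rather than merely an inequality. This is precisely where the extra time regularity $u\in W^{2,2}(0,T;H)$ — false for an arbitrary weak solution, but established in Theorem~\ref{thm:main} — enters, since it legitimises differentiating the kinetic energy $\tfrac12\|\dot u(t)\|_H^2$ and pairing $\ddot u(t)$ with $\dot u(t)-\dot z(t)$ in $H$. It is worth noting that neither the constitutive law \eqref{eq:conlaw} nor the monotonicity \eqref{eq:monoton} of $G$ is needed for \eqref{eq:enbal}; they enter only afterwards, in order to recast the dissipation term $\int_0^s(\sigma(t),e\dot u(t))_{p,p'}\,\de t$ in the form displayed in \eqref{eq:nonlin_enbal}.
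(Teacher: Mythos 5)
Your proof is correct and follows essentially the same strategy as the paper's: both start from the identity~\eqref{eq:strong_form}, test with the velocity $\dot u-\dot z$ localised to $[0,s]$, and use $u\in W^{2,2}(0,T;H)$ to integrate $(\ddot u,\dot u)_H$ exactly via the chain rule for $t\mapsto\tfrac12\|\dot u(t)\|_H^2$. The only technical difference is in how you localise to $[0,s]$: you multiply by the ramp cutoff $\chi_{s,h}$ and let $h\to 0^+$ (then pass $s\to T^-$ for the endpoint), whereas the paper simply re-runs the approximation argument of Lemma~\ref{lem:conlaw} on the subinterval $[0,s]$ and tests with $\dot u-\dot z$ directly there; both are standard and equivalent.
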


\begin{proof}
We fix $s\in (0,T]$. By arguing as in~\eqref{eq:strong_form}, we derive that the pair $(u,\sigma)\in (\V\cap W^{2,2}(0,T;H))\times L^{p}(0,T;L^p(\Omega;\R^{d\times d}_{sym}))$ satisfies
\begin{equation*}
\int_0^s(\ddot u(t),\varphi(t))_H\,\de t+\int_0^s(\sigma(t),e \varphi(t))_{p,p'}\,\de t=\int_0^s(f(t),\varphi (t))_H\,\de t
\end{equation*}
for all $\varphi\in L^{p'}(0,s;V)\cap L^2(0,s;H)$ with $\varphi(t)\in V_t^D$ for a.e.\ $t\in[0,s]$.
Hence, if we use $\varphi:=\dot u-\dot z$ we obtain 
\begin{equation*}
\int_0^s(\ddot u(t),\dot u(t))_H\,\de t+\int_0^s(\sigma(t),e \dot u(t))_{p,p'}\,\de t=\mathcal W(0,s;u,\sigma)\quad\text{for all $s\in[0,T]$}.
\end{equation*}
Finally, since $u\in W^{2,2}(0,T;H)$, we can use the identity
\begin{align*}
\int_0^s(\ddot u(t),\dot u(t))_H\,\de t=\frac{1}{2}\|\dot u(s)\|_H^2-\frac{1}{2}\|\dot u(0)\|_2^2\quad\text{for all $s\in [0,T]$}
\end{align*}
to derive~\eqref{eq:enbal}.
\end{proof}

We conclude the paper by showing that in the nonlinear Kelvin-Voigt model, which is the one associated to the monotone operator
\begin{equation}\label{eq:G-KV}
 G(\xi):=|\xi|^{p-2}\xi \quad\text{for $\xi\in\R^{d\times d}_{sym}$}, 
\end{equation}
the solution to the system~\eqref{eq:nonlin-KV}--\eqref{eq:boundaryN} found in Theorem~\ref{thm:main} satisfies another energy-dissipation balance, which is~\eqref{eq:enbal2}. This implies that the crack can not propagate in time, i.e., also the nonlinear Kelvin-Voigt model of dynamic fracture exhibits the viscoelatic paradox, as discussed in the introduction.

We assume that $G$ is defined by~\eqref{eq:G-KV}. Therefore, $G$ satisfies the assumptions (G1)--(G3) and in addition it is strictly monotone. In particular, $G$ is invertible and its inverse is given by
$$G^{-1}(\eta)=|\eta|^{p'-2}\eta \quad\text{for $\eta\in\R^{d\times d}_{sym}$}.$$
In this case, the system~\eqref{eq:nonlin-KV} reduces to 
\begin{equation}\label{eq:nonlin-KV2}
\ddot u(t)-\div(|eu(t)+e\dot u(t)|^{p'-2}(eu(t)+e\dot u(t)))=f(t)\quad\text{in $\Omega\setminus\Gamma_t$, $t\in[0,T]$},
\end{equation}
with boundary conditions
\begin{alignat}{4}
&u(t)=z(t) && \quad \text{on $\partial_D\Omega$}, & \quad t\in[0,T],\\
&|eu(t)+e\dot u(t)|^{p'-2}(eu(t)+e\dot u(t))\nu=0 &&\quad\text{on $\partial_N\Omega\cup \Gamma_t$}, & \quad t\in[0,T],\label{eq:boundaryN2}
\end{alignat}
and initial conditions
\begin{alignat}{4}
&u(0)=u^0,\quad \dot{u}(0)=u^1&&\quad\text{in $\Omega\setminus \Gamma_0$}.\label{eq:initial2}
\end{alignat}
According to Definition~\ref{def:weak_sol}, we say that $u\in\V$ is a {\it weak solution} to the nonlinear Kelvin-Voigt system~\eqref{eq:nonlin-KV2}--\eqref{eq:boundaryN2} if $u(t)-z(t)\in V_t^D$ for all $t\in[0,T]$ and the following identity holds: 
\begin{equation*}
-\int_0^T(\dot u(t),\dot \varphi(t))_H\,\de t+\int_0^T(|eu(t)+e\dot u(t)|^{p'-2}(eu(t)+e\dot u(t)),e \varphi(t))_{p,p'}\,\de t=\int_0^T( f(t),\varphi (t))_H\,\de t
\end{equation*}
for all $\varphi \in \D$. By Theorems~\ref{thm:main} and~\ref{thm:uniq} we know that there exists a unique weak solution $u\in \mathcal V\cap W^{2,2}(0,T;H)$ to~\eqref{eq:nonlin-KV2}--\eqref{eq:boundaryN2} which satisfies the initial conditions~\eqref{eq:initial2}. Moreover, by Theorem~\ref{thm:enbal} the function $u$ satisfies the .

We want to show that the energy-dissipation balance~\eqref{eq:enbal} can be rephrased just in terms of $u$. Given $u\in \mathcal V\cap W^{2,2}(0,T;H)$, we define the mechanical energy $\mathscr E$ at time $s\in[0,T]$ as
$$\mathscr E(s;u):=\frac{1}{2}\|\dot u(s)\|_H^2+\frac{1}{p'}\|eu(s)\|_{p'}^{p'},$$
the energy dissipated by the viscous term $\mathscr V$ on the time interval $[0,s]\subseteq[0,T]$ as
$$\mathscr V(0,s;u):=\int_0^s(|eu(t)+e\dot u(t)|^{p'-2}(eu(t)+e\dot u(t))-|eu(t)|^{p'-2}eu(t),e\dot u(t))_{p,p'}\,\de t,$$
and the total work $\mathscr W$ on the time interval $[0,s]\subseteq[0,T]$ as
\begin{align*}
\mathscr W(0,s;u):=&\int_0^s(f(t), \dot u(t)-\dot z(t))_H\,\de t+\int_0^s(\ddot u(t), \dot z(t))_H\,\de t \nonumber\\
&+\int_0^s(|eu(t)+e\dot u(t)|^{p'-2}(eu(t)+e\dot u(t)),e \dot z(t))_{p,p'}\,\de t.
\end{align*}

\begin{remark}
For $p=2$ we have
$$\mathscr V(0,s;u)=\int_0^s\|e\dot u(t)\|^2_H\, \de t,$$
which corresponds to the viscous dissipation term in the linear Kelvin-Voigt model. Moreover, since $G^{-1}$ satisfies (G1), we deduce that
\begin{equation*}
(G^{-1}(\eta_1)-G^{-1}(\eta_2))\cdot (\eta_1-\eta_2)\ge 0\quad\text{for all $\eta_1,\eta_2\in\R^{d\times d}_{sym}$},
\end{equation*}
and by choosing $\eta_1=eu(t)+e\dot{u}(t)$ and $\eta_2=eu(t)$ we derive 
$$\mathscr V(0,s;u)\ge 0\quad\text{for every $s\in[0,T]$}.$$
Therefore $\mathcal V$ can be seen as the analogous of the viscous dissipation term in the nonlinear setting.
\end{remark}

Thanks to Theorem~\ref{thm:enbal} and~\eqref{eq:G-KV}, we derive the following result.

\begin{corollary}\label{coro:enbal2}
Every weak solution $u\in \V\cap W^{2,2}(0,T;H)$ to the nonlinear Kelvin-Voigt system~\eqref{eq:nonlin-KV2}--\eqref{eq:boundaryN2} satisfies the energy-dissipation balance
\begin{equation}\label{eq:enbal2}
\mathscr E(s;u)+\mathscr V(0,s;u)= \mathscr E(0;u)+\mathscr W(0,s;u)\quad\text{for all $s\in[0,T]$}.
\end{equation}
\end{corollary}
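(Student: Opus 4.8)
The plan is to specialise the energy-dissipation balance~\eqref{eq:enbal} of Theorem~\ref{thm:enbal} to the operator~\eqref{eq:G-KV} and then to recognise the $p'$-elastic energy as an exact time derivative. Since $G(\xi)=|\xi|^{p-2}\xi$ is strictly monotone and invertible with $G^{-1}(\eta)=|\eta|^{p'-2}\eta$, the constitutive law~\eqref{eq:conlaw} forces
\[
\sigma(t)=|eu(t)+e\dot u(t)|^{p'-2}(eu(t)+e\dot u(t))\qquad\text{for a.e.\ }t\in[0,T],
\]
so that $u$ being a weak solution of~\eqref{eq:nonlin-KV2}--\eqref{eq:boundaryN2} is the same as $(u,\sigma)$, with $\sigma$ of this form, being a weak solution in the sense of Definition~\ref{def:weak_sol}; note $\sigma\in L^p(0,T;L^p(\Omega;\R^{d\times d}_{sym}))$ because $(p'-1)p=p'$ and $eu+e\dot u\in L^{p'}(0,T;L^{p'}(\Omega;\R^{d\times d}_{sym}))$ thanks to $u\in\mathcal V$. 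Substituting this $\sigma$ makes $\mathcal W(0,s;u,\sigma)$ of Theorem~\ref{thm:enbal} coincide with $\mathscr W(0,s;u)$, and writing $\sigma=\big(\sigma-|eu|^{p'-2}eu\big)+|eu|^{p'-2}eu$ in the dissipation term gives
\[
\int_0^s(\sigma(t),e\dot u(t))_{p,p'}\,\de t=\mathscr V(0,s;u)+\int_0^s(|eu(t)|^{p'-2}eu(t),e\dot u(t))_{p,p'}\,\de t .
\]

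It therefore suffices to establish the chain-rule identity
\[
\int_0^s(|eu(t)|^{p'-2}eu(t),e\dot u(t))_{p,p'}\,\de t=\frac{1}{p'}\|eu(s)\|_{p'}^{p'}-\frac{1}{p'}\|eu(0)\|_{p'}^{p'}\qquad\text{for all }s\in[0,T] .
\]
For this I would set $w:=eu$, which lies in $W^{1,p'}(0,T;L^{p'}(\Omega;\R^{d\times d}_{sym}))$ with $\dot w=e\dot u$ because $u\in\mathcal V\subset W^{1,p'}(0,T;V)$, and work with the convex function $\Phi(\xi):=\tfrac1{p'}|\xi|^{p'}$, which (as $p'>1$) is of class $C^1$ with $\nabla\Phi(\xi)=|\xi|^{p'-2}\xi$ and $(p'-1)$-growth. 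Starting from the pointwise-in-$x$ identity $\Phi(w(t+h))-\Phi(w(t))=\int_0^1\nabla\Phi\big(w(t)+\theta(w(t+h)-w(t))\big)\cdot(w(t+h)-w(t))\,\de\theta$, integrating over $\Omega$, dividing by $h$ and letting $h\to0$, one obtains that $t\mapsto\int_\Omega\Phi(w(t,x))\,\de x=\tfrac1{p'}\|w(t)\|_{p'}^{p'}$ is absolutely continuous on $[0,T]$ with $\tfrac{\de}{\de t}\tfrac1{p'}\|w(t)\|_{p'}^{p'}=(|eu(t)|^{p'-2}eu(t),e\dot u(t))_{p,p'}$ for a.e.\ $t$; the limit is justified by the $(p'-1)$-growth of $\nabla\Phi$ (so that $\nabla\Phi(w)\in L^p$ and the integrand is in $L^1$), the continuity of $t\mapsto w(t)$ in $L^{p'}$, and a dominated-convergence argument along the difference quotients of $w$. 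Integrating from $0$ to $s$ and combining with the two displays above rewrites~\eqref{eq:enbal} exactly as~\eqref{eq:enbal2}, after recalling that $\|\cdot\|_2=\|\cdot\|_H$ and that $\mathscr E(s;u)=\tfrac12\|\dot u(s)\|_H^2+\tfrac1{p'}\|eu(s)\|_{p'}^{p'}$.

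The only genuinely delicate step is this last chain rule: in contrast with the kinetic term $\tfrac12\|\dot u(s)\|_H^2$, which Theorem~\ref{thm:enbal} handled through the Hilbert-space formula for $\int_0^s(\ddot u,\dot u)_H\,\de t$, the elastic energy $\tfrac1{p'}\|eu\|_{p'}^{p'}$ lives in the non-Hilbertian space $L^{p'}$, so one must invoke (or reprove) the differentiability of the $C^1$ convex functional $w\mapsto\tfrac1{p'}\|w\|_{p'}^{p'}$ along $W^{1,p'}$-in-time curves. Everything else is a direct substitution; and by Theorems~\ref{thm:main} and~\ref{thm:uniq} the solution $u$ at hand indeed has the regularity $u\in\mathcal V\cap W^{2,2}(0,T;H)$ that makes all the terms in~\eqref{eq:enbal2} meaningful.
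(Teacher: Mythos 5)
Your proposal is correct and follows essentially the same route as the paper: it starts from Theorem~\ref{thm:enbal}, substitutes $\sigma=|eu+e\dot u|^{p'-2}(eu+e\dot u)$ obtained from the constitutive law via $G^{-1}$, decomposes the dissipation integral by adding and subtracting $|eu|^{p'-2}eu$, and identifies the remaining term with $\frac{1}{p'}\|eu(s)\|_{p'}^{p'}-\frac{1}{p'}\|eu(0)\|_{p'}^{p'}$ via the chain rule for the $L^{p'}$-energy along a $W^{1,p'}$-in-time curve. The only difference is cosmetic: you spell out a justification of the chain-rule step (via the $C^1$ convex function $\Phi(\xi)=\tfrac{1}{p'}|\xi|^{p'}$ and dominated convergence along difference quotients), whereas the paper simply asserts the absolute continuity of $t\mapsto\|eu(t)\|_{p'}^{p'}$ and its a.e.\ derivative as a known consequence of $u\in W^{1,p'}(0,T;V)$.
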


\begin{proof}
By Theorem~\ref{thm:enbal} we know that $u$ satisfies the energy dissipation balance~\eqref{eq:enbal}. Moreover, for the nonlinear operator $G$ given by~\eqref{eq:G-KV} we observe that
\begin{align*}
&\int_0^s(\sigma(t),e\dot u(t))_{p,p'}\,\de t\\
&=\int_0^s(|eu(t)+e\dot u(t)|^{p'-2}(eu(t)+e\dot u(t)),e\dot u(t))_{p,p'}\,\de t\\
&=\int_0^s(|eu(t)+e\dot u(t)|^{p'-2}(eu(t)+e\dot u(t))-|eu(t)|^{p'-2}eu(t),e\dot u(t))_{p,p'}\,\de t\\
&\quad+\int_0^s(|eu(t)|^{p'-2}eu(t),e\dot u(t))_{p,p'}\,\de t\\
&=\int_0^s(|eu(t)+e\dot u(t)|^{p'-2}(eu(t)+e\dot u(t))-|eu(t)|^{p'-2}eu(t),e\dot u(t))_{p,p'}\,\de t+\frac{1}{p'}\|eu(s)\|_{p'}^{p'}-\frac{1}{p'}\|eu(0)\|_{p'}^{p'}.
\end{align*}
Indeed, $u\in W^{1,p'}(0,T;V)$, which implies that the map $t\mapsto\|eu(t)\|_{p'}^{p'}$ is absolutely continuous on $[0,T]$ with 
$$\frac{\de}{\de t}\|e u(t)\|_{p'}^{p'}=p'(|eu(t)|^{p'-2}eu(t),e\dot u(t))_{p,p'}\quad\text{for a.e.\ $t\in[0,T]$}.$$
By combining the previous identity with~\eqref{eq:enbal} we derive~\eqref{eq:enbal2}.
\end{proof}

As a consequence of Corollary~\ref{coro:enbal2} we deduce that for every weak solution $u\in \mathcal V\cap W^{2,2}(0,T;V)$ to the nonlinear Kelvin-Voigt system~\eqref{eq:nonlin-KV2}--\eqref{eq:boundaryN2} the crack can not grow in time. Indeed, as explained in the introduction, according to the Griffith criterion there is a balance between the mechanical energy dissipated and the energy used to increase the crack. In the nonlinear Kelvin-Voigt model~\eqref{eq:nonlin-KV2}--\eqref{eq:boundaryN2}, this reads as
$$\mathscr E(s;u)+\mathcal H^{d-1}(\Gamma_t\setminus\Gamma_0)+\mathscr V(0,s;u)= \mathscr E(0;u)+\mathscr W(0,s;u)\quad\text{for all $s\in[0,T]$}.$$
Since the energy dissipated by the crack growth, which is $\mathcal H^{d-1}(\Gamma_t\setminus\Gamma_0)$, does not appear in~\eqref{eq:enbal2}, we derive that for the weak solution $u\in \mathcal V\cap W^{2,2}(0,T;H)$ to~\eqref{eq:nonlin-KV2}--\eqref{eq:boundaryN2} given by Theorem~\ref{thm:main} we must have $\mathcal H^{d-1}(\Gamma_t\setminus\Gamma_0)=0$ for every $t\in[0,T]$. Hence, the crack associated to $u$ does not increase in time. We point out that this phenomenon, called viscoelastic paradox, is the same which arises in linear Kelvin-Voigt models, as shown in~\cite{DM-Lar,Tasso}.


\begin{acknowledgements}
The authors are members of the {\em Gruppo Nazionale per l'Analisi Ma\-te\-ma\-ti\-ca, la Probabilit\`a e le loro Applicazioni} (GNAMPA) of the {\em Istituto Nazionale di Alta Matematica} (INdAM). M.~C. acknowledges the support of the project STAR PLUS 2020 - Linea 1 (21-UNINA-EPIG-172) ``New perspectives in the Variational modeling of Continuum Mechanics'', and of the INdAM - GNAMPA Project ``Equazioni differenziali alle derivate parziali di tipo misto o dipendenti da campi di vettori'' (Project number CUP\_E53C22001930001). A.C. acknowledges the support of the INdAM - GNAMPA Project ``Problemi variazionali per funzionali e operatori non-locali'' (Project number CUP\_E53C22001930001) and of the MUR PRIN project ``Elliptic and parabolic problems, heat kernel estimates and spectral theory'' (Project number 20223L2NWK). F.S acknowledges the financial support received from the Austrian Science Fund (FWF) through the project TAI 293. 
\end{acknowledgements}


\end{document}